\newcommand{\N}{\mathbb{N}}
\newcommand{\ii}{\mathrm{i}}
\newtheorem{thm}{Theorem}[section]
\newtheorem{lemma}[thm]{Lemma}
\newtheorem{cor}[thm]{Corollary}
\newtheorem{prop}[thm]{Proposition}
\newtheorem{rem}[thm]{Remark}
\newtheorem{defin}[thm]{Definition}
\numberwithin{equation}{section}
\def\one{\mbox{1\hspace{-4.25pt}\fontsize{12}{14.4}\selectfont\textrm{1}}}
\theoremstyle{definition}
\begin{document}

\title[The commutator of the Bergman Projection]{The Commutator of the Bergman Projection on Strongly Pseudoconvex Domains with Minimal Smoothness}
\author[B. Hu]{Bingyang Hu}
\author[Z. Huo]{Zhenghui Huo}
\thanks{Z. Huo was supported by National Natural Science Foundation of China NSFC Grant \# 12201265}
\author[L. Lanzani]{Loredana Lanzani}
\thanks{L. Lanzani was supported in part by National Science Foundation grant DMS \#1901978, and Simons Foundation Travel Support for Mathematicians, award no. 919763} 
\author[K. Palencia]{Kevin Palencia}
\author[N.A. Wagner]{Nathan A. Wagner}
\thanks{N. A. Wagner was supported in part by National Science Foundation grant DGE \#1745038, and NSF grant DMS \# 2203272.}
\address{Bingyang Hu \hfill\break\indent 
 Department of Mathematics\hfill\break\indent 
 Purdue University\hfill\break\indent 
 150 University Ave\hfill\break\indent 
 West Lafayette, IN, 47907 USA}
\email{hu776@purdue.edu}
\address{Zhenghui Huo \hfill\break\indent 
 Zu Chongzhi center for Mathematics and Computational Sciences \hfill\break\indent Division of Natural and Applied Sciences\hfill\break\indent 
 Duke Kunshan University\hfill\break\indent 
 8 Duke Avenue\hfill\break\indent 
 Kunshan, Jiangsu, China}
\email{zhenghui.huo@duke.edu}
\address{Loredana Lanzani \hfill\break\indent 
Department of Mathematics\hfill\break\indent 
Syracuse University\hfill\break\indent 
Dipartimento di Matematica\hfill\break\indent 
Universita' di Bologna
}
\email{loredana.lanzani@gmail.com}

\address{Kevin Palencia \hfill\break\indent 
Department of Mathematical Sciences\hfill\break\indent 
Northern Illinois University \hfill\break\indent 
1425 W Lincoln Hwy \hfill\break\indent
DeKalb, IL, 60115 USA 
}
\email{palencia@niu.edu}
\address{Nathan A. Wagner \hfill\break\indent 
 Department of Mathematics \hfill\break\indent 
Brown University \hfill\break\indent 151 Thayer St \hfill\break\indent 
Providence, RI, 02912 USA}
\email{nathan\_wagner@brown.edu}

\date{\today}

\subjclass[2010]{32A36, 42B35}%

\keywords{}%

\begin{abstract} Consider a bounded, strongly pseudoconvex domain $D\subset \mathbb C^n$ with minimal smoothness (namely, the class $C^2$) and let $b$ be a locally integrable function on $D$.
We characterize boundedness (resp., compactness) in $L^p(D), p > 1$, of the commutator $[b, P]$ of the Bergman projection $P$ in terms of an appropriate bounded (resp. vanishing) mean oscillation requirement on $b$.
We also establish the equivalence of such notion of BMO (resp., VMO) with other BMO and VMO spaces given in the literature.
Our proofs use a dyadic analog of the Berezin transform and holomorphic integral representations going back (for smooth domains) to N. Kerzman \& E. M. Stein, and E. Ligocka.
\end{abstract}

\maketitle
\section{Introduction}
 Given a domain $D$ in complex Euclidean space $\mathbb C^n$, the Bergman space $A^p(D)$ is the space of all holomorphic functions on $D$ that are also in $L^p(D)$ with respect to the Lebesgue measure in $\mathbb C^n$. For $p=2$ there is a (unique) orthogonal projection from $L^2(D)$ onto $A^2(D)$, and it plays a central role in the analysis of many problems in one and several complex variables, and in operator theory:  
 such projection is known as the Bergman projection $P$.
 
  In this paper we study the commutator $[b, P]$ for
   bounded strongly pseudoconvex $D$ whose boundary has minimal regularity (more precisely, the boundary of $D$ must be of class $C^2)$ and where the symbol $b$ is a measurable function defined on $D$.
  \vskip0.05in
We highlight right away a key issue that distinguishes our present setting from the classical context of smooth domains (to be interpreted here as domains of class $C^4$ or better): 
 \begin{enumerate}
\item [$\bullet$] When the boundary of $D$ is sufficiently smooth, asymptotic expansions for the Bergman kernel, or at least size estimates, can be achieved and are a main tool in the solution of the $L^p(D)$-regularity problem for $P$. Decisive results were obtained  over the past 50 years for smooth domains $D$ that satisfy various geometric conditions: 
\begin{enumerate}
\item [(i)] $D\subset \mathbb C^n$ is strongly pseudoconvex (see, e.g., \cites{F1974, PS1977});
\item [(ii)] $D \subset \mathbb C^2$ and its boundary is of finite type (see, e.g., \cites{M1989, NRSW1989});
\item [(iii)]  $D \subset \mathbb C^n$ is convex and its boundary is of finite type (see, e.g., \cites{M1994, MS1994});
\item [(iv)] $D \subset \mathbb C^n$ is of finite type and its Levi form is diagonalizable  (see, e.g., \cites{CD2006}).
\end{enumerate}
\vskip0.05in
See \cites{B1984, B1992, BL2009, KP2008, Z2013, KS1978, L1984, Ra1986} for related results and surveys of the extensive literature.

\vskip0.05in

\item [$\bullet$] When $D$ has minimal boundary regularity, size estimates for the Bergman kernel are unavailable, hence a different approach is required: here the literature is sparse. The first occurrence of a kernel estimate-free approach to the Bergman projection appears in Ligocka \cite{L1984}, where she establishes the continuity of $P$ on the H\"older spaces $C^\alpha (D)$ for $D \subset\mathbb C^n$  strongly pseudoconvex and with boundary of class $C^4$. Her method originates from an idea pioneered by Kerzman and Stein \cite{KS1978} to study the Cauchy-Szeg\H{o} projection on smoothly bounded, strongly pseudoconvex $D$, and it does not work below the class $C^4$. The solution of the $L^p$-regularity problem for the Bergman projection on $D$ of class $C^2$ was achieved in \cite{LS2012}, also via a kernel estimate-free method; further refinements of those techniques and new applications were obtained very recently in \cites{SW2021, WW2021}.
\end{enumerate}

\vskip0.05in

Here we seek to establish necessary and sufficient conditions on $b$ so that the commutator $[b, P]$ is bounded (resp., compact) on $L^p(D)$. For $D$ smooth, such questions are equivalent to the problem of boundedness or compactness of the Hankel operators $H_b$ and $H_{\bar b}$. These latter operators have been investigated in various settings: we  briefly review the extensive literature. In the simplest case of the unit disk $\mathbb{D}$, Axler \cite{Ax1986} studied Hankel operators of the form $H_{\bar{f}}$, where $f$ is analytic in $\mathbb D$. He obtained the important result that $H_{\bar{f}}$ is bounded (resp., compact) if and only if $f$ belongs to the Bloch space (resp., little Bloch space). Moreover, Axler proved the equivalence of several characterizations of the Bloch space, including a BMOA condition involving averages on balls in the Bergman metric. Next, Zhu \cite{Z1992}  studied $H_b$ and $H_{\bar b}$ without the assumption that $b$ is analytic, and showed that their boundedness is equivalent to membership of $b$ in a BMO class that depends on $p$. When $D$ is a symmetric domain, B\'ekoll\`e, Berger, Coburn, and Zhu \cite{BBCZ}  proved that both $H_b$ and $H_{\bar b}$ are bounded (resp., compact) on $L^2$ if and only if $b$ satisfies a BMO (resp., VMO) condition. Similar results were later obtained by H. Li \cites{Li1992, Li1994} for strongly pseudoconvex $D$ with smooth boundary. In this case, the BMO spaces (resp., VMO) were defined using the Berezin transform and, alternatively, balls in the Kobayashi metric. A different approach was pursued in \cite{BL1993}: here, the authors studied integral operators whose kernels satisfy a certain size estimate and homogeneity related to  Carleson regions; results on $[b, P]$ for $D$ smoothly bounded and strongly pseudoconvex were obtained as a byproduct.

\vskip0.05in
 Our current work is motivated by the very recent studies \cites{GHK2022, HWW2020, HH2021} on the mapping properties of Bergman projections and composition operators via dyadic harmonic analysis. 
 It also relies on the kernel estimate-free approach to the Bergman projection developed in the aforementioned work \cite{LS2012}. To be precise, we seek to characterize the boundedness (resp., compactness) of $[b, P]$ on $L^p(D)$, where $1<p<\infty$, in terms of the membership of $b$ in an appropriate BMO space (resp., VMO)  that depends on $p$. As the reader will see, the use of dyadic structures simplifies some of the arguments made, for example, in \cite{Li1992}, even for smooth $D$. We also prove the equivalence of dyadic BMO spaces  (resp., VMO), with BMO spaces (resp., VMO) defined via the Kobayashi metric: here, a key role is played by the
   estimates \cite{BB2000}  obtained for the Kobayashi metric on strongly pseudoconvex domains with $C^2$ boundary (see Section \ref{BMOsection}). 

\medskip 
 
On the other hand, the proofs in \cites{BBCZ,Li1992} rely on kernel estimates of the Berezin transform that are not available in the $C^2$-category: a main effort of the current paper is to overcome this difficulty. We introduce new ideas on two fronts:

 \newpage
\begin{itemize}
    \item [(1)] We introduce the so-called dyadic Berezin transform, which allows us to reduce arguments to simpler computations on the dyadic  ``kubes'' in the system via techniques from dyadic harmonic analysis. The dyadic Berezin transform in general can be viewed as a natural counterpart of the classical Berezin transform when the boundary is smooth, moreover, it can be applied to study other problems, for example, the compactness of weighted composition operators on smooth strongly pseduconvex domains (see \cite{HH2021}). 
    
    \medskip

\item [(2)] We relate the original commutator
$[b, P]$ to the commutators of $b$ with a family of  auxiliary operators, which we call Kerzman-Stein-Ligocka operators $\{ T_\varepsilon \}.$ Here, $\varepsilon>0$ is a certain ``truncation" parameter of non-orthogonal projections, and plays an essential role in the setting of minimal smoothness: the basic idea\footnote{motivated by the study \cite{LW2022} of the commutator of the Cauchy-Szeg\H{o} projection with minimal boundary regularity.} is to study $[b, P]$ via the behavior of $[b, T_\varepsilon]$ (see Theorem \ref{boundednessmain}). The advantage of considering $[b, T_\varepsilon]$ is that it has a kernel that is more or less explicitly constructed, and therefore affords precise size estimates.

\end{itemize}

\vskip0.05in
We may now state our main results. 
Below and in what follows, given an exponent $q \in (1,\infty),$ we let $q'$ denote the H\"{o}lder conjugate exponent $\frac{q}{q-1}$, while $\textnormal{BMO}_{r}^p$ (resp., $\textnormal{VMO}_{r}^p$) denotes the BMO (resp., VMO) space which takes
 the $L^p$ average over all the Kobayashi balls with radius $r>0$ (see, Definition \ref{BMO-01}).

\begin{thm}\label{CommutatorBoundedMain}
Let $D$ be a bounded strongly pseudoconvex domain with $C^2$ boundary.  

Given $q \geq 2, r>0$ and $b \in L^q(D)$, we have that the commutator $[b,P]$ is bounded on $L^p(D)$ for $q'\leq p \leq q$ if and only if $ b \in \textnormal{BMO}_r^{\:q}$, in which case
$$
\left\| [b, P] \right\|_{L^p(D)} \lesssim \|b\|_{\textnormal {BMO}^{\:q}_r},
$$
where the implicit constant in the above estimate only depends on the parameters $r$, $p, q$, and the domain $D$.
\end{thm}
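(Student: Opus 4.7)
The plan is to prove both the sufficiency and the necessity directions by replacing the Bergman projection $P$ with the one-parameter family of Kerzman--Stein--Ligocka operators $\{T_\varepsilon\}$ introduced in the paper, whose kernels admit explicit size estimates on the kubes of a dyadic system adapted to the Kobayashi geometry of $D$. This substitution is essential in the $C^2$ category, where the asymptotic expansions and pointwise off-diagonal bounds for the Bergman kernel itself that underlie the smooth-boundary arguments of \cites{Li1992, BBCZ} are simply unavailable.

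For the sufficiency direction (that is, $b\in \mathrm{BMO}^q_r \Rightarrow [b,P]$ bounded on $L^p$), I would first invoke the decomposition behind Theorem \ref{boundednessmain} to write $[b,P]=[b,T_\varepsilon]+\mathcal{R}_\varepsilon(b)$, where the remainder $\mathcal{R}_\varepsilon(b)$ has operator norm controlled, upon a convenient choice of $\varepsilon$, by $\|b\|_{L^q(D)}$ and hence by $\|b\|_{\mathrm{BMO}^q_r}$. I would then handle $[b,T_\varepsilon]$ via a dyadic decomposition of its kernel along the kube system and dominate the bilinear form by a sparse form of commutator type,
$$
\bigl|\langle [b,T_\varepsilon]f,g\rangle\bigr|\;\lesssim\;\sum_{Q\in \mathcal{S}}|Q|\,\langle |b-\langle b\rangle_Q|\rangle_{Q,q}\,\langle f\rangle_{Q,q'}\,\langle g\rangle_Q,
$$
where $\mathcal{S}$ is a sparse collection of kubes. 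Standard sparse-form/weighted arguments then yield $\|[b,T_\varepsilon]\|_{L^p\to L^p}\lesssim \|b\|_{\mathrm{BMO}^q_r}$; the range $q'\leq p\leq q$ emerges naturally from H\"older interpolation within the sparse form.

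For the necessity direction, the strategy is to test $[b,P]$ on carefully chosen families of functions essentially localized to a single kube $Q$ (or to a Kobayashi ball $B_r(z)$), with the dyadic Berezin transform playing the role of the classical Berezin transform used in \cite{Li1992}. The key heuristic is that, modulo controllable errors, $[b,P]$ applied to a test function that is roughly constant on $Q$ returns $b-\langle b\rangle_Q$ on $Q$, so the assumed $L^p$ bound on $[b,P]$ can be transferred to the $L^q$ mean oscillation of $b$ on the kube. Summing/taking suprema over kubes, and then transferring from the dyadic averages to the Kobayashi averages via the equivalence discussed in Section \ref{BMOsection}, will give $b\in\mathrm{BMO}^q_r$.

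The hard part will be the necessity direction and, within the sufficiency direction, establishing the sparse domination for $[b,T_\varepsilon]$: both hinge on bounds for the kernels that, in the smooth category, would simply be quoted from the Berezin transform's asymptotics. The dyadic Berezin transform is introduced precisely to circumvent this obstacle---it replaces pointwise control by an averaged control on kubes adapted to the Kobayashi geometry, and together with the $BB2000$-style metric estimates used in Section \ref{BMOsection} it allows us to carry through arguments that in the smooth setting would rest on unavailable kernel asymptotics. A technical subtlety I expect to manage carefully is the choice of $\varepsilon$: it must be small enough to make the kernel of $T_\varepsilon$ behave like a standard Calder\'on--Zygmund kernel on kubes of all admissible sizes, yet not so small that the remainder $\mathcal{R}_\varepsilon(b)$ loses its control by $\|b\|_{\mathrm{BMO}^q_r}$.
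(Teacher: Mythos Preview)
Your sufficiency sketch and the paper's diverge in two substantive ways. First, the reduction from $[b,P]$ to $[b,T_\varepsilon]$ is not an additive decomposition $[b,P]=[b,T_\varepsilon]+\mathcal R_\varepsilon(b)$ with a remainder controlled by $\|b\|_{L^q}$; the paper instead composes with $S_\varepsilon=I-(T_\varepsilon^*-T_\varepsilon)$ to get $[b,P]S_\varepsilon=[b,T_\varepsilon]-P[b,S_\varepsilon]$, inverts $S_\varepsilon$ on $L^2$, and for $p\neq 2$ uses the finer splitting $S_\varepsilon=I-(A_\varepsilon+B_\varepsilon)$ from \cite{LS2012} together with a bootstrap from the already-proved $L^2$ case. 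The remainder is not bounded by $\|b\|_{L^q}$ a priori: the term $P[b,S_\varepsilon]$ is controlled only because $[b,T_\varepsilon]$ and $[b,T_\varepsilon^*]$ are bounded, and the term $[b,P]B_\varepsilon$ requires knowing $[b,P]$ is bounded on $L^2$ first. Second, for $[b,T_\varepsilon]$ the paper does not pass through a sparse form. It uses the dyadic Berezin transform $\widehat b_\ell$ as a \emph{splitting} device, writing $[b,T_\varepsilon]=[b-\widehat b_\ell,T_\varepsilon]+[\widehat b_\ell,T_\varepsilon]$; the first piece is handled by a Carleson-measure/Schur argument (Lemmas \ref{carleson lemma} and \ref{SchurWithBMO}), and the second by the pointwise Lipschitz-type bound $|\widehat b_\ell(z)-\widehat b_\ell(w)|\lesssim \|b\|_{\mathrm{BMO}}\max\{1,d_K(z,w)\}$ combined with Lemma \ref{metric estimate} and Schur's test. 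Your sparse route may be viable, but it is a genuinely different argument and you would still need to articulate the Kerzman--Stein reduction correctly.

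Your necessity sketch has a real gap. Testing $[b,P]$ on functions ``roughly constant on a kube $Q$'' does not return $b-\langle b\rangle_Q$ on $Q$: unlike a Calder\'on--Zygmund singular integral, $P$ does not annihilate (or nearly annihilate) $\mathbf 1_Q$, and $P(b\mathbf 1_Q)$ has no reason to be close to $\langle b\rangle_Q$ on $Q$. The paper's argument avoids this entirely by working with the Hankel operators $H_b=[b,P]P$ and $H_{\bar b}^*=-P[b,P]$, testing $H_b$ on the \emph{holomorphic} peaking functions $S_{z,q}$ of Definition \ref{20220924defn01}. Because $S_{z,q}$ is holomorphic one has $H_b(S_{z,q})=S_{z,q}\bigl(b-\tfrac{1}{S_{z,q}}P(S_{z,q}b)\bigr)$, and the lower bound $|S_{z,q}|^q\gtrsim |E(z,r)|^{-1}$ on $E(z,r)$ (Corollary \ref{RecipMeasureBall}) yields $\|H_b(S_{z,q})\|_{L^q}^q\gtrsim b_{r,q}(z)$, the local $L^q$ distance of $b$ to holomorphic functions. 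This places $b$ in $\mathrm{BDA}_r^q$, and the passage to $\mathrm{BMO}_r^q$ is Theorem \ref{BDA BMO Rel} ($\mathrm{BDA}_r^p\cap\overline{\mathrm{BDA}_r^p}=\mathrm{BMO}_r^p$), proved via the polydisk Bergman projection (Lemma \ref{PolydiskImBound}). The dyadic Berezin transform plays no role on the necessity side.
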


\begin{thm}\label{CommutatorCompactMain}
Let $D$ be a bounded strongly pseudoconvex domain with $C^2$ boundary. 

Given $q \geq 2, r>0$ and $b \in L^q(D)$, we have that the commutator $[b,P]$ is compact on $L^p(D)$ for $q'\leq p \leq q$ if and only if $ b \in \textnormal{VMO}_r^{\:q}$.
 
\end{thm}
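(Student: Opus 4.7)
The plan is to run the proof in parallel with Theorem \ref{CommutatorBoundedMain}, replacing \emph{bounded} mean oscillation by \emph{vanishing} mean oscillation, and \emph{boundedness} by \emph{compactness}. As in the boundedness theorem, the first move is the Kerzman--Stein--Ligocka reduction announced in item (2) of the introduction: one rewrites $P$ in terms of the auxiliary family $\{T_\varepsilon\}$, whose kernels admit explicit pointwise control even in the $C^2$ category, via the perturbation formula signaled by Theorem \ref{boundednessmain}. This transfers the compactness question for $[b,P]$ to the corresponding question for $[b,T_\varepsilon]$, modulo a remainder that one must show is itself compact on $L^p(D)$ when $b\in\textnormal{VMO}_r^{\,q}$.

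For the sufficiency direction, the idea is that $\textnormal{VMO}_r^{\,q}$ is the closure, in the $\textnormal{BMO}_r^{\,q}$ norm, of a class of symbols for which the commutator is visibly compact---natural candidates being bounded continuous functions on $\overline{D}$, or functions supported on Kobayashi-compact subsets of $D$, for which $[b,P]$ can be shown to be Hilbert--Schmidt (or approximable by finite-rank operators) using the explicit kernel representation of $T_\varepsilon$ together with the decay properties of the dyadic Berezin transform. Given $b\in\textnormal{VMO}_r^{\,q}$, we then split $b = b_n + (b-b_n)$ with $b_n$ in such a nice class and $\|b-b_n\|_{\textnormal{BMO}_r^{\,q}}\to 0$; Theorem \ref{CommutatorBoundedMain} yields $\|[b-b_n,P]\|_{L^p\to L^p}\to 0$ uniformly for $p$ in the range $[q',q]$, so $[b,P]$ is an operator-norm limit of compact operators and hence compact.

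For the necessity direction, the plan is to exhibit, for each small Kobayashi ball $B=B_{\textnormal{kob}}(z,\rho)$ lying near $\partial D$, a pair of test functions $f_B,g_B$, normalized in $L^{p'}(D)$ and $L^p(D)$ respectively, that tend weakly to zero as $z\to\partial D$ and such that the pairing $\langle [b,P]f_B,\,g_B\rangle$ controls from below the $L^q$-oscillation of $b$ on $B$. Normalized indicator functions of dyadic kubes, or normalized Bergman reproducing kernels, are the natural candidates, just as in the boundedness proof. Compactness of $[b,P]$ forces $\|[b,P]f_B\|_{L^p}\to 0$, whence the pairings vanish in the limit, and this is precisely the VMO decay required. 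The dyadic Berezin transform enters here by translating the operator-theoretic statement into a statement about averages of $b$ over dyadic kubes, where the geometry of the test functions is transparent.

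The principal obstacle is concentrated in the reduction step: one must prove that the perturbation $P-T_\varepsilon$ contributes a \emph{compact} commutator $[b,P-T_\varepsilon]$ whenever $b\in\textnormal{VMO}_r^{\,q}$, a strictly stronger assertion than the boundedness used in Theorem \ref{CommutatorBoundedMain}, and one which must be established without any pointwise size estimate for the Bergman kernel of the $C^2$ domain $D$. A secondary obstacle is the norm-density, inside $\textnormal{VMO}_r^{\,q}$, of the class of symbols for which $[b_n,P]$ is manifestly compact: this requires the equivalence between the various BMO/VMO descriptions (Kobayashi, dyadic, and Berezin-type) to be sharp enough to transfer approximation statements across all of them, which is where the results of Section \ref{BMOsection} will be invoked.
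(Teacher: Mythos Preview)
Your plan shares the paper's overall architecture (reduce $[b,P]$ to $[b,T_\varepsilon]$ via the Kerzman--Stein--Ligocka identity), but the execution of the sufficiency direction is genuinely different from the paper's and contains a gap you should be aware of.

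\textbf{Sufficiency.} You propose an approximation argument: write $b=b_n+(b-b_n)$ with $b_n$ ``nice'' and $\|b-b_n\|_{\textnormal{BMO}_r^{\,q}}\to 0$, invoke Theorem~\ref{CommutatorBoundedMain} for the tail, and check compactness of $[b_n,P]$ directly. The paper does \emph{not} do this. Instead it proves compactness of $[b,T_\varepsilon]$ head-on by the same dyadic Berezin splitting $b=\widehat{b}_\ell+(b-\widehat{b}_\ell)$ used in the boundedness proof: the piece $[b-\widehat{b}_\ell,T_\varepsilon]$ is shown compact by a Carleson/dominated-convergence argument (the VMO hypothesis makes the boundary contribution small), and $[\widehat{b}_\ell,T_\varepsilon]$ is shown compact by combining a VMO refinement of Proposition~\ref{averagegrowthrate} with the ``little-$o$'' Rudin--Forelli estimate in Proposition~\ref{RudinForelli}. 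Your approach would work in principle, but it needs two ingredients the paper never supplies: (i) density of your ``nice'' class in $\textnormal{VMO}_r^{\,q}$ for the $\textnormal{BMO}_r^{\,q}$ norm, and (ii) compactness of $[b_n,P]$ for that class. For $b_n\in C_c(D)$ item (ii) is easy, but then (i) is delicate: naive truncation $b\mapsto b\mathbf{1}_{\{-\rho>1/n\}}$ fails because it creates large oscillation on Kobayashi balls straddling the cutoff, even when $b$ itself has small oscillation. You correctly flag this as an obstacle, but Section~\ref{BMOsection} does not contain a density statement of this type, so the gap is real.

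\textbf{The reduction step.} Your ``principal obstacle'' is slightly misidentified. The paper never isolates $[b,P-T_\varepsilon]$; the algebraic identity gives $[b,P]S_\varepsilon=[b,T_\varepsilon]-P[b,S_\varepsilon]$, and since $S_\varepsilon=I-(T_\varepsilon^*-T_\varepsilon)$ the remainder is built from $[b,T_\varepsilon]$ and $[b,T_\varepsilon^*]$, both compact by the same argument. For $p\neq 2$ the extra term $[b,P]B_\varepsilon(I-A_\varepsilon)^{-1}$ is compact because $B_\varepsilon$ is smoothing and the $p=2$ case is already in hand. So the remainder requires no separate analysis beyond what is done for $T_\varepsilon$.

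\textbf{Necessity.} Your outline is close to the paper's, with one caveat: if by ``normalized Bergman reproducing kernels'' you literally mean $k_z=K_z/\|K_z\|$, that will not work in the $C^2$ category---the whole point is that size estimates for $K$ are unavailable. The paper uses the explicitly constructed peaking functions $S_{z,q}$ of Definition~\ref{20220924defn01} (which do satisfy the needed lower bounds on Kobayashi balls), shows $\|H_b S_{z,q}\|_{L^q}^q\gtrsim b_{r,q}(z)$, observes $S_{z,q}\rightharpoonup 0$ as $z\to bD$, and then passes from $\textnormal{VDA}_r^{\,q}\cap\overline{\textnormal{VDA}_r^{\,q}}$ to $\textnormal{VMO}_r^{\,q}$ via the analogue of Theorem~\ref{BDA BMO Rel}. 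The dyadic Berezin transform plays no role in this direction.
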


For $p = 2,$ Theorem \ref{CommutatorBoundedMain} and Theorem \ref{CommutatorCompactMain} also extend the main results in \cite{BBCZ} to the strongly pseudoconvex $C^2$-category.  

\medskip

Finally, we  establish the equivalence of dyadic $\textnormal{BMO}$ spaces (resp., VMO) for $D$ with minimal boundary regularity, with the BMO spaces (resp., VMO) considered in the earlier literature. 

\begin{thm} \label{BMOmaintheorem}
Let $1 \le p<\infty$ and $D$ be a bounded strongly pseudoconvex domain with $C^2$ boundary. Then for any $r>0$ we have 
$\textnormal{BMO}_{\mathcal B}^{\:p}=\textnormal{BMO}_{\mathcal D}^{\:p}=\textnormal{BMO}_r^{\:p}$, and
$\textnormal{VMO}_{\mathcal B}^{\:p}=\textnormal{VMO}_{\mathcal D}^{\:p}=\textnormal{VMO}_r^{\:p}$.
 \end{thm}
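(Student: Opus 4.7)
The plan is to prove the three BMO norms comparable via the chain $\textnormal{BMO}_{\mathcal B}^{\:p} \simeq \textnormal{BMO}_{r}^{\:p} \simeq \textnormal{BMO}_{\mathcal D}^{\:p}$, and similarly for VMO, by exploiting two structural facts. First, the Balogh--Bonk estimates \cite{BB2000} give two-sided bounds for the Kobayashi distance $d_k$ on $C^2$ strongly pseudoconvex domains in terms of an anisotropic quasi-distance, so that $(D,d_k,dV)$ is a space of homogeneous type in which Kobayashi balls enjoy doubling, a bounded-overlap covering property, and the standard chaining along geodesics. Second, the dyadic system $\mathcal D$ used throughout the paper is constructed so that each kube $Q\in\mathcal D$ is sandwiched between Kobayashi balls $B_k(c_Q,cr_0)\subset Q\subset B_k(c_Q,Cr_0)$ of comparable radii and comparable Lebesgue measure.

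The first step is to show that $\textnormal{BMO}_r^{\:p}$ is independent of $r>0$: this is a standard homogeneous-type exercise, where a covering argument controls larger balls by smaller ones, and chaining along geodesics controls smaller balls by larger ones, both up to an absolute constant times the oscillation. Granting radius independence, the comparison $\textnormal{BMO}_r^{\:p}\simeq\textnormal{BMO}_{\mathcal D}^{\:p}$ follows from the sandwich: the $L^p$ oscillation of $b$ on a kube $Q$ is dominated by the oscillation on $B_k(c_Q,Cr_0)$ (with the change of reference constant absorbed via $|b_Q-b_{B_k(c_Q,Cr_0)}|^p$ and the triangle inequality), and conversely each Kobayashi ball $B_k(z,r)$ is covered by a bounded number of kubes of comparable diameter, so oscillations on the ball are controlled by the maximum of oscillations on those kubes.

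The comparison $\textnormal{BMO}_{\mathcal B}^{\:p}\simeq\textnormal{BMO}_r^{\:p}$ is the heart of the theorem and where the $C^2$ hypothesis creates the main obstacle: pointwise asymptotics for the Bergman kernel are unavailable, so the arguments of \cite{Li1992,BBCZ} do not transfer directly. Instead I would work entirely with the probability measure $d\mu_z = |K(\cdot,z)|^2 K(z,z)^{-1}\,dV$ and the qualitative concentration estimate $\mu_z(D\setminus B_k(z,R))\to 0$ as $R\to\infty$, uniformly for $z$ at fixed distance from $bD$. This concentration can be extracted from the Kerzman--Stein--Ligocka style decomposition and the localization estimates already in play for Theorem \ref{boundednessmain}, without invoking any pointwise kernel bound. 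Concentration of $\mu_z$ on $B_k(z,r_0)$ immediately gives $\textnormal{BMO}_r^{\:p}\lesssim\textnormal{BMO}_{\mathcal B}^{\:p}$. For the reverse, I decompose the Berezin integral dyadically into $B_k(z,r_0)\cup\bigcup_{j\geq 1}\bigl(B_k(z,(j+1)r_0)\setminus B_k(z,jr_0)\bigr)$; by chaining, the oscillation of $b$ on the $j$-th annulus relative to $b_{B_k(z,r_0)}$ grows at most linearly in $j$ times $\|b\|_{\textnormal{BMO}_{r_0}^{\:p}}$, while the mass of $\mu_z$ on the $j$-th annulus must decay fast enough to sum against that linear growth: obtaining a usable (polynomial or better) decay rate for $\mu_z$ on these annuli in the $C^2$ category is the technical crux, and I would derive it from the Kerzman--Stein--Ligocka decomposition $P = T_\varepsilon + R_\varepsilon$ by estimating $K(\cdot,z)$ via the explicit kernel of $T_\varepsilon$ on the far annuli and absorbing $R_\varepsilon$ using its smoothing properties. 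Finally, the VMO equivalence follows at once by localizing each of the above quantitative comparisons, since the vanishing of oscillation as $z\to bD$ (or as the radius shrinks) is preserved under the bounded linear maps between the three norms constructed in the BMO proof.
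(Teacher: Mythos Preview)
Your proposal rests on a misreading of the definition of $\textnormal{BMO}_{\mathcal B}^{\:p}$. In this paper the Berezin-type space is \emph{not} defined via the true Bergman kernel $K(\cdot,z)$; it uses instead the explicit normalized peaking functions $s_z(w)=S_z(w)/\|S_z\|_{L^2}$ built from $G(z,w)$ in Proposition~\ref{PeakingExistence} (see Definition~\ref{BMOBerezin} and equation~\eqref{modifiedBerezin}). Since $|G(z,w)|\approx F(z,w)$, the measure $|s_z|^2\,dV$ has explicit density $\approx |\rho(z)|^{n+1}/F(z,w)^{2(n+1)}$, and the annular decay you seek is precisely the ``little-$o$'' Rudin--Forelli estimate of Proposition~\ref{RudinForelli}. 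The programme of extracting pointwise decay of $K(\cdot,z)$ from the Kerzman--Stein--Ligocka identity is therefore unnecessary here---and, as you sketch it, is not clearly feasible: the identity $PS_\varepsilon=T_\varepsilon$ involves an operator inverse, so pointwise control of $K(z,w)$ does not follow from pointwise control of $T_\varepsilon(z,w)$ plus a smoothing remainder. The paper's argument for $\textnormal{BMO}_{\mathcal B}^{\:p}=\textnormal{BMO}_r^{\:p}$ (Lemma~\ref{BerezinKobayashiEquivalence}) instead combines this explicit density with the dyadic Berezin transform $\widehat{b}_\ell$, the chaining estimate of Proposition~\ref{averagegrowthrate}, and Lemma~\ref{metric estimate}.

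There is a second gap in your treatment of $\textnormal{BMO}_{\mathcal D}^{\:p}$. The dyadic norm is taken over \emph{tents} $\widehat{K}_j^k$, not kubes; a tent is not sandwiched between two Kobayashi balls of comparable radius, since it reaches the boundary and contains kubes of all generations $k'\geq k$. Your sandwich argument gives only the easy direction $\textnormal{BMO}_{\mathcal D}^{\:p}\subset\textnormal{BMO}_r^{\:p}$ (this is Lemma~\ref{containment}). The reverse inclusion requires decomposing a tent into its kubes, bounding $|\langle b\rangle_{E(c_j^k,\beta)}-\langle b\rangle_{E(c_{j_0}^{k_0},\beta)}|\lesssim(k-k_0)\|b\|_{\textnormal{BMO}_{3\beta}^{\:p}}$ by iterating Lemma~\ref{differenceofaverages}, and summing the series $\sum_k k^p s^{-(2n+2)k}$; this is the actual content of the proof of Theorem~\ref{BMOequivalence}. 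Your chaining idea for radius independence is correct in spirit and matches Proposition~\ref{averagegrowthrate}, though in the paper independence of $r$ is obtained \emph{a posteriori} from $\textnormal{BMO}_r^{\:p}=\textnormal{BMO}_{\mathcal B}^{\:p}$ for $r\geq 3\beta$ followed by a refinement of the dyadic grid (Section~5.3), rather than proved directly at the outset.
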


Here
$\textnormal{BMO}_{\mathcal D}^{\:p}$ is the dyadic $\textnormal{BMO}$ space which takes the $L^p$  average over the dyadic ``tents" in $D$ (see, Definition \ref{BMO-02})
and $\textnormal{BMO}_{\mathcal B}^{\:p}$ is the $\textnormal{BMO}$ space induced by a Berezin-like transform (see, Definition \ref{BMOBerezin}). 

\medskip

The structure of the paper is as follows. In Section 2, we begin with several preliminaries, which include a construction of dyadic structures over a bounded and strongly pseudoconvex domain $D$ with $C^2$ boundary, as well as a brief review for the Kerzman-Stein-Ligocka operators $T_\varepsilon$. Moreover, we assemble known results about the Kobayashi metric and prove several lemmas which are needed in our analysis later. 
 Sections 3 and 4 are devoted to proving
 Theorems \ref{CommutatorBoundedMain} and \ref{CommutatorCompactMain}; in particular, we introduce the dyadic Berezin transform, which plays a fundamental role in proving our main results. Finally, in Section 5  we define $\textnormal{BMO}_{\mathcal D}^{\:p}$ and $\textnormal{VMO}_{\mathcal D}^{\:p}$, as well as $\textnormal{BMO}_{\mathcal B}^{\:p}$ and $\textnormal{VMO}_{\mathcal B}^{\:p}$, and we prove Theorem \ref{BMOmaintheorem}.
\\
\\
{\bf Acknowledgements.} This project started at the Mathematics Research Community (MRC) on \emph{New Problems in Several Complex Variables} which took place remotely June 19-23, 2021. We are grateful to the staff of the MRC for their support and  for providing an inclusive and inspiring work environment. We thank Xianghong Gong, Alan Legg  and Andrew Zimmer for helpful discussions during various stages of this work. We would also like to thank Giacomo Gavelli for pointing out an error in an earlier version of the manuscript.

\section{Preliminaries} \label{Prelim}
\subsection{The Horizontal Metric and Associated Dyadic Systems}
 Let $D \subset \mathbb{C}^n$ be a strongly pseudoconvex domain with $C^2$ boundary. This implies that there exists a $C^2$ defining function $\rho$ that is strictly plurisubharmonic, see \cite{Ra1986}, and we will fix this choice of defining function for the remainder of the paper. We will denote by $A^p(D)$ the Bergman space $L^p(D) \cap \textnormal{Hol}(D).$ We will denote Lebesgue measure on $D$ by $dV$ and use the shorthand $|A|$ to represent the Lebesgue measure of a subset $A \subset D.$ 

The starting point for our analysis is to construct a dyadic system inside $D$. Note that such constructions have already been originally considered in the case of a $C^\infty$ domain (see, e.g., \cites{GHK2022, HWW2020, HH2021}) but do extend to the $C^2$-category: we provide the relevant details below.
 Given a point $\zeta \in bD,$ let $H_p$ denote the ``horizontal subspace''
$$
H_{\zeta}=\left\{ Z \in \mathbb{C}^n: \langle \overline \partial \rho(\zeta), Z \rangle\right\}=0,
$$
where $\langle \cdot, \cdot \rangle$ denotes the usual inner product in $\mathbb{C}^n$ and  $\overline \partial \rho(\zeta)$ is the vector with $j$-th component equal to $\frac{ \partial \rho}{\partial \overline{z}_j}(\zeta).$  Let $d_H$ denote the ``horizontal metric,'' namely
	\begin{align} \label{boundaryball}
	d_H(\zeta_1,\zeta_2) :=\inf\Big\{& \int_{0}^{1}|\alpha^\prime(t)|dt: \alpha \text{ is any piecewise smooth map from }[0,1] \text{ to }  bD \nonumber\\&\text{ with } \alpha(0)=\zeta_1, \alpha(1)=\zeta_2, \text{ and } \alpha^\prime(t)\in H_{\alpha(t)}(bD)\Big\}, \quad \xi_1, \xi_2 \in bD. 
	\end{align}
Let $B(\zeta,r)$ denote a boundary ball in the horizontal metric centered at $\zeta$ of radius $r,$ and let $\sigma$ denote the induced Lebesgue measure on $bD.$ The metric $d(\cdot,\cdot)$ is a \emph{doubling metric}, that is, every ball $B(\zeta,r)$ can be covered by at most $M$ balls of radius $r/2.$ Therefore, by the results in \cite{HK}, we can construct finitely many dyadic systems $\mathcal{Q}_{\ell}$ with dyadic sets $Q_j^k$ so that every boundary ball is well-approximated by a dyadic set. It should be mentioned that the fact that the horizontal metric leads to a space of homogeneous type was originally proven in the case when $D$ has smooth ($C^\infty$) boundary. However, one can show that Lebesgue surface measure is doubling on the metric balls using the following ``Ball-Box'' lemma, which is stated in the generality of $C^2$ domains in \cite{BB2000}. To state this result, we introduce the following splitting of a vector $Z \in \mathbb{C}^n.$ Given a base point $\zeta \in bD,$ we write $Z= Z_N+ Z_H,$ where $Z_H$ belongs to the horizontal subspace $H_\zeta$, and $Z_N$ belongs to the orthogonal complement of $Z_H$. We refer to $Z_N$ as the projection of $Z$ onto  the ``complex normal'' direction, while orthogonal unit vectors that span $H_\zeta$ correspond to the complex tangential directions.  

\begin{lemma}[Balogh, Bonk \cite{BB2000}]\label{BallBox}
Let $D$ be a strongly pseudoconvex domain with $C^2$ boundary. Then there exist positive constants $C$ and $r_0$ so that for all $\zeta \in bD$ and  $0<r<r_0,$ there holds

$$
\emph{Box}(\zeta,r/C) \subset B(\zeta,r) \subset \emph{Box}(\zeta,rC)
$$

Here, 
$$
\emph{Box} (\zeta,r)=\left\{\zeta+Z_N+Z_H \in bD: |Z_N|<r^2, |Z_H|<r \right\}.
$$

\end{lemma}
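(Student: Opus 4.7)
The plan is to work in adapted coordinates at the boundary point $\zeta$ and to exploit the fact that strong pseudoconvexity forces the horizontal distribution on $bD$ to be bracket--generating of step two: the Levi form at $\zeta$ being positive definite means that brackets of pairs of horizontal vector fields span the remaining real tangential direction (the \emph{complex normal} direction $JN$, with $N$ the outward unit normal). First I would fix $\zeta\in bD$ and translate/rotate so that $\zeta=0$ and $\overline\partial\rho(0)$ is a positive multiple of $d\overline{z}_n$. Then $H_0=\{Z:Z_n=0\}$ and the missing tangential direction is $\mathrm{Im}\,z_n$. Using the implicit function theorem on $\{\rho=0\}$ I would parametrize a neighborhood of $0$ in $bD$ by the $2n-2$ complex tangential real coordinates $Z_H$ together with one real normal coordinate $t$, and in those coordinates choose a $C^1$ orthonormal frame $\{X_1,\ldots,X_{2n-2}\}$ for the horizontal distribution.

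For the upper inclusion $B(\zeta,r)\subset\mathrm{Box}(\zeta,rC)$, I would take a horizontal curve $\alpha$ starting at $\zeta$ of length at most $r$. The horizontal coordinates evolve with $|Z_H'|\le|\alpha'|$, so $|Z_H(s)|\le s\le r$. For the normal coordinate, the key point is that at $\zeta$ the horizontal subspace annihilates $dt$, and the frame is continuous, so along a horizontal curve $|t'(s)|\lesssim|\alpha(s)-\zeta|\lesssim s$; integrating gives $|t(s)|\lesssim s^2\le r^2$, which is exactly the box scaling. This step uses only continuity of the frame, hence is comfortable in the $C^2$ category.

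For the lower inclusion $\mathrm{Box}(\zeta,r/C)\subset B(\zeta,r)$ I would argue constructively. Given a target $(Z_H,t)$ in the small box, first run along horizontal fields to correct the $Z_H$-coordinate, using length $\lesssim|Z_H|\lesssim r$. This leaves a residual error of size $\lesssim r^2$ in the $t$-coordinate, to be absorbed by a commutator loop: by strong pseudoconvexity there exist indices $j,k$ with $[X_j,X_k](\zeta)$ having nonzero component in the complex normal direction, and the ``parallelogram'' path $e^{sX_j}e^{sX_k}e^{-sX_j}e^{-sX_k}$ displaces the base point by $\approx s^2[X_j,X_k](\zeta)$ while having total arclength $O(s)$. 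Choosing $s\sim\sqrt{|t|}\lesssim r$ produces the required $t$-displacement inside the length budget $r$, and iterating a bounded number of such parallelograms with different $(j,k)$ picks up an arbitrary direction in the one-dimensional normal line.

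The main obstacle is the low regularity: at the $C^2$ level the defining function is merely twice differentiable, so the horizontal frame is only $C^1$ and the Baker--Campbell--Hausdorff expansion used in the smooth setting to justify the commutator calculation is not directly available. I would circumvent this, following Balogh--Bonk, by a direct Gr\"onwall--type comparison for the flows of $X_j$ and $X_k$, bounding the closing error of the parallelogram loop in terms of the bounded continuous Lie bracket $[X_j,X_k]$ rather than via an iterated Taylor expansion of the flow map; this is precisely the place where the $C^2$ regularity of $\rho$ (giving $C^1$ vector fields, hence a merely continuous but bounded bracket) is both used and sufficient.
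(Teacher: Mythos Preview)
The paper does not supply a proof of this lemma; it is quoted as a result of Balogh and Bonk \cite{BB2000} (their Proposition~3.1) and used as a black box throughout. So there is no ``paper's own proof'' to compare your attempt against.

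That said, your sketch is the standard sub-Riemannian Ball--Box argument (in the spirit of Nagel--Stein--Wainger) specialized to the contact structure on $bD$: the upper inclusion by integrating the defect of horizontality along a short curve, the lower inclusion by first correcting the tangential coordinates directly and then the remaining normal coordinate via commutator loops, using nondegeneracy of the Levi form. Your identification of the regularity obstacle is accurate: with $\rho\in C^2$ the horizontal frame is only $C^1$, so the bracket is merely continuous and the usual Baker--Campbell--Hausdorff expansion is not available. The workaround you propose---a direct Gr\"onwall-type estimate on the closing error of the parallelogram loop---is the right idea in principle, though turning it into a complete proof with constants uniform over $\zeta\in bD$ takes some care (one needs uniform control of the frame and its bracket over the compact boundary). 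Balogh and Bonk's own argument is somewhat more hands-on: they work directly with the second-order Taylor expansion of $\rho$ and explicit coordinate computations rather than with an abstract horizontal frame, which sidesteps some of the frame-regularity bookkeeping you would otherwise face.
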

To finish the proof
that $d_H(\cdot,\cdot)$ is a doubling metric, note that straightforward arguments using for example the coordinate system in \cite{Pe1994}*{Lemma 2.6} show that if $r< r_0$ for an appropriately chosen $r_0$, then  $\sigma(\text{Box}(\zeta,r))) \approx r^{2n}$  with an implicit constant independent of $\zeta.$
 
The following two lemmas due to Hyt\"{o}nen and Kairema make the construction of the dyadic systems precise. Below, $j,k$ are nonnegative integers. 

\begin{lemma} [Hyt\"{o}nen, Kairema \cite{HK}] \label{HK1}
Let $\delta$ be a positive constant that is sufficiently small and let $s>1$ be a parameter.
There exist reference points $\{p_j^{(k)}\}$ on the boundary $bD$ and  an associated collection of subsets $\mathcal Q=\{Q_j^{k}\}$ of $bD$ with $p_j^{(k)}\in Q_j^{k}$ such that the following properties hold:
\begin{enumerate}[label=\textnormal{(\arabic*)}]
	\item [(1).] For each fixed $k$, $\{p_j^{(k)}\}$ is a largest set of points on $bD$ satisfying $d_H(p_j^{(k)},p_i^{(k)})> s^{-k}\delta$ for all $i,j$, that is, if $p\in bD$ is a point that is not in $\{p_j^{(k)}\}$, then there exists an index $j_o$ such that $d_H(p,p_{j_o}^{(k)})\leq s^{-k}\delta$.
	\item [(2).] For each fixed $k$, $\bigcup_j Q^k_j=bD$ and $Q^k_j\bigcap Q^k_i=\emptyset$ when $i\neq j$.
	\item [(3).]For $k< l$ and any $i,j$, either $Q^k_j\supseteq Q^l_i$ or $Q^k_j\bigcap Q^l_i=\emptyset$.
	\item [(4).] There exist positive constants $c$ and $C$ such that for all $j$ and $k$, \[B(p_j^{(k)},cs^{-k}\delta)\subseteq Q^k_j\subseteq B(p_j^{(k)},Cs^{-k}\delta),\]
	where we recall $B(\cdot, \cdot)$ is the ball on $bD$ induced by the metric \eqref{boundaryball}. 
	\item [(5).] Each $Q_j^k$ contains of at most $N$ numbers of $Q^{k+1}_i$. Here $N$ does not depend on $k, j$.
	\end{enumerate}
\end{lemma}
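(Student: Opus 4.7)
The plan is to follow the Christ--Hyt\"{o}nen--Kairema construction of dyadic cubes in a space of homogeneous type. The hypotheses of that construction are met because $(bD, d_H, \sigma)$ is doubling: indeed, Lemma~\ref{BallBox} combined with the estimate $\sigma(\text{Box}(\zeta,r))\approx r^{2n}$ yields $\sigma(B(\zeta,2r))\lesssim \sigma(B(\zeta,r))$ uniformly in $\zeta$ for $r$ sufficiently small, and for $r$ large the doubling bound is trivial by compactness of $bD$.

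First I would select, for each generation $k\ge 0$, a maximal $s^{-k}\delta$-separated set $\{p_j^{(k)}\}\subset bD$ via Zorn's lemma or a greedy procedure; maximality immediately yields property~(1). I would further arrange the nested inclusion $\{p_j^{(k)}\}\subset\{p_i^{(k+1)}\}$ so that each generation refines the previous. To build the cubes with the nesting property~(3), I would proceed level by level: to each $p_i^{(k+1)}$ assign a unique ``parent'' $p_{j(i)}^{(k)}$ among the reference points at level $k$ lying within distance $s^{-k}\delta$ (such a parent exists by property~(1) applied at level~$k$), breaking ties by a fixed total ordering of indices. Then define $Q_j^k$ as the union of $Q_i^{k+1}$ taken over all children $p_i^{(k+1)}$ of $p_j^{(k)}$, initialized at the finest relevant generation by sending each $\zeta\in bD$ to the cube centered at its nearest reference point with ties broken deterministically. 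Properties~(2) and~(3) are then automatic by construction, while property~(5) follows from the doubling of $\sigma$ combined with the fact that the children of $p_j^{(k)}$ are $s^{-k-1}\delta$-separated and all lie in a ball of radius $s^{-k}\delta$ about $p_j^{(k)}$.

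The main obstacle is securing property~(4), where the parameters $\delta$, $s$, $c$, and $C$ must be delicately tuned. For the outer inclusion $Q_j^k\subseteq B(p_j^{(k)},Cs^{-k}\delta)$, one observes that any descendant $p_i^{(\ell)}$ of $p_j^{(k)}$ lies within distance at most
\[
\sum_{m=k}^{\ell-1}s^{-m}\delta \;\le\; \frac{\delta s^{-k}}{1-s^{-1}}
\]
of $p_j^{(k)}$, so taking $s$ large forces this geometric series to collapse into a fixed constant $C$ independent of $k$. For the inner inclusion $B(p_j^{(k)},cs^{-k}\delta)\subseteq Q_j^k$, one must verify that any $\zeta$ in such a small ball is assigned, through all finer generations, to a descendant of $p_j^{(k)}$ rather than of some competing $p_{j'}^{(k)}$; choosing $c$ small and $s$ large, the above drift estimate combined with the $s^{-k}\delta$-separation of level-$k$ reference points prevents such a ``theft.'' Once a simultaneous choice of $\delta$, $s$, $c$, $C$ is made to satisfy both inclusions, all five properties hold at once, completing the construction.
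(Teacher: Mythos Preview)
The paper does not prove this lemma at all: it is stated as a citation of Hyt\"{o}nen--Kairema \cite{HK}, with the only justification being the observation (via Lemma~\ref{BallBox} and the surface-measure estimate for boxes) that $(bD,d_H)$ is a doubling metric space so that the abstract construction of \cite{HK} applies. Your proposal goes further and actually sketches that construction, so there is no ``paper's proof'' to compare against beyond the verification of the doubling hypothesis, which you correctly identify.

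Your outline is essentially the Christ/Hyt\"{o}nen--Kairema argument and is sound in spirit, but one step as written would not compile into a proof. You write that the cubes are ``initialized at the finest relevant generation by sending each $\zeta\in bD$ to the cube centered at its nearest reference point.'' There is no finest generation: the construction runs over all $k\ge 0$, and the $Q_j^k$ cannot be built bottom-up from a terminal level. In \cite{HK} one instead defines, for each point $\zeta\in bD$ and each level $k$, the unique chain of ancestors determined by the parent map, and sets $Q_j^k$ to be the set of $\zeta$ whose level-$k$ ancestor is $p_j^{(k)}$; equivalently, $Q_j^k$ is the union over all $\ell\ge k$ of the (closed) balls around descendants, with some care about boundaries to ensure disjointness in (2). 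Your drift estimate $\sum_{m\ge k}s^{-m}\delta\le \delta s^{-k}/(1-s^{-1})$ is exactly what drives both inclusions in (4), and your argument for (5) via separation plus doubling is correct; only the initialization needs to be rephrased as a limiting/ancestry construction rather than a bottom-up one.
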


\begin{lemma} [Hyt\"{o}nen, Kairema \cite{HK}] \label{HK2}
Let $\delta$ and $\{p^{(k)}_j\}$ be as in Lemma \ref{HK1}. There are finitely many collections $\{\mathcal Q_l\}_{l=1}^{N}$ such that the following hold:
\begin{enumerate}[label=\textnormal{(\arabic*)}]
\item [(1).] Each collection $\mathcal Q_l$ is associated to some dyadic points $\{p^{(k)}_j\}$ and they satisfy all the properties in Lemma \ref{HK1}.
\item [(2).] For any $\zeta \in bD$ and small $r>0$, there exist $Q_{j_1}^{k_1}\in \mathcal Q_{l_1}$ and $Q_{j_2}^{k_2}\in \mathcal Q_{l_2}$ such that
\[Q_{j_1}^{k_1}\subseteq B(\zeta,r)\subseteq Q_{j_2}^{k_2}\;\;\;\text{ and }\;\;\;\sigma(B(\zeta,r))\approx\sigma(Q_{j_1}^{k_1})\approx\sigma(Q_{j_2}^{k_2}).\]
\end{enumerate}
\end{lemma}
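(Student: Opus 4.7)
The assertion is a direct instance of the Hyt\"{o}nen--Kairema theorem on the existence of finitely many adjacent dyadic systems on a space of homogeneous type, applied to $(bD, d_H, \sigma)$. Because $(bD, d_H, \sigma)$ has already been shown to be of homogeneous type---Lemma \ref{BallBox} together with the coordinate computation yields $\sigma(B(\zeta, r)) \approx r^{2n}$, which is a doubling relation---the statement is essentially a literal citation of \cite{HK}. I would nonetheless sketch the underlying construction, since it is a metric-space analogue of the Euclidean ``one-third trick'' and explains the value of $N$.

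The key idea is to build $N$ collections $\mathcal{Q}_\ell$, each satisfying the properties of Lemma \ref{HK1}, but arising from $N$ different maximal $s^{-k}\delta$-separated nets $\{p_j^{(k),\ell}\}_{j,k}$ on $bD$. The nets are chosen so that at every scale their reference points ``fill in'' each other's gaps: precisely, for any $\zeta \in bD$ and any $k$, there exists $\ell = \ell(\zeta,k)$ such that $d_H\bigl(\zeta, \partial Q\bigr) \gtrsim s^{-k}\delta$ for the unique generation-$k$ cube $Q \in \mathcal{Q}_\ell$ that contains $\zeta$. In \cite{HK} this far-from-boundary property is established by selecting the nets probabilistically via random shifts and then extracting a finite deterministic family by pigeonhole; in our setting one could also imitate the $1/3$-shift construction more concretely using the local coordinate charts of \cite{Pe1994}*{Lemma 2.6}.

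Once the finite family $\{\mathcal{Q}_\ell\}_{\ell=1}^N$ is in hand, both containments in part (2) follow quickly from property (4) of Lemma \ref{HK1}. Given $\zeta$ and $r>0$, pick $k_2$ with $s^{-k_2}\delta \approx r$ and choose $\ell$ so that $\zeta$ is at horizontal distance $\gtrsim s^{-k_2}\delta$ from the boundary of its ambient cube $Q^{k_2}_{j_2} \in \mathcal{Q}_\ell$; property (4) then forces $B(\zeta, r) \subseteq Q^{k_2}_{j_2}$. For the inner inclusion, choose $k_1$ with $C s^{-k_1}\delta < r$ and any reference point $p^{(k_1)}_{j_1}$ from $\mathcal{Q}_1$ lying within $c\, s^{-k_1}\delta$ of $\zeta$; property (4) yields $Q^{k_1}_{j_1} \subseteq B(\zeta, r)$. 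The comparability of the three measures is then immediate from the doubling of $\sigma$ in $d_H$. The main obstacle---the combinatorial construction of the adjacent systems, and especially the quantitative far-from-boundary estimate---is already fully handled in \cite{HK}, so no new argument beyond the verification that $(bD, d_H, \sigma)$ is of homogeneous type is required.
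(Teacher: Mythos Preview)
Your proposal is correct and matches the paper's treatment: the paper does not give its own proof of this lemma but simply attributes it to Hyt\"{o}nen--Kairema \cite{HK}, relying on the fact (established just before the lemma) that $(bD, d_H, \sigma)$ is a space of homogeneous type. Your sketch of the adjacent-systems construction and the far-from-boundary mechanism goes beyond what the paper records, but is accurate and consistent with the cited source.
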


Given $z \in D$ sufficiently close the $bD$, the normal projection $\pi(z) \in bD$, which minimizes the distance from $z$ to $bD$, exists and is unique (see \cite{BB2000}*{Lemma 2.1}). For a given collection $\mathcal{Q}_{\ell}=\left\{Q_j^k \right\}$, the corresponding dyadic tents on the interior of the domain can now be constructed as follows:
\begin{equation} \label{DyadicTents} 
\widehat{K}_j^k= \{ z \in D: \pi(z) \in Q_j^k \text{ and } |\pi(z)-z|< s^{-2k} \delta^2 \}. \end{equation} 

We let $\mathcal{T}_{\ell}= \{\widehat{K}_j^k\}.$ Clearly, if $\widehat{K}_{j_1}^{k_1}$ and $\widehat{K}_{j_2}^{k_2}$ are two dyadic tents belonging to $\mathcal{T}_{\ell}$ and $k_2\geq k_1$, then either $\widehat{K}_{j_2}^{k_2} \subset \widehat{K}_{j_1}^{k_1}$ or $\widehat{K}_{j_2}^{k_2} \cap \widehat{K}_{j_1}^{k_1}=\emptyset.$

We now can define disjoint dyadic ``kubes" given these dyadic tents. 

\begin{defin} \label{kube}
For a collection $\mathcal T$ of dyadic tents corresponding to the collection $\mathcal Q$ on the boundary, we define the center $c_j^{(k)}$ of each tent $\hat {K}_j^{k}$ to be the unique point satisfying 
\begin{itemize}
	\item  $\pi(c_j^{(k)})=p^{(k)}_j$; and 
	\item $|p^{(k)}_j-c_j^{(k)}|=\frac{1}{2}\sup_{\substack{z\in \widehat{K}_j^k\\ \pi(z)=p^{(k)}_j}}\operatorname{dist}(z,bD)$.
\end{itemize}
We set $K^{-1}=D \backslash\left(\bigcup_{j}\hat  {K}_j^{0}\right)$, and for each tent $\hat K^k_j$, we define the dyadic ``kube'' of $\widehat{{K}_j^{k}}$ by 
${K}_j^{k}:=\widehat{K}_j^{k} \backslash\left(\bigcup_{l} \widehat  {K}_l^{k+1}\right),$
where $l$ is any index with $p^{(k+1)}_l\in \hat{K}^{k}_j$.
\end{defin}

The following lemma collects several properties of the dyadic tents and kubes:

\begin{lemma} [Huo, Wagner, Wick \cite{HWW2020}] \label{DyaSystem}
Let $\mathcal T=\{\hat K^k_j\}$ be the system of tents induced by  $\mathcal Q$. Let $K^k_j$ be the kubes of $\hat K^k_j$. Then \begin{enumerate}[label=\textnormal{(\arabic*)}]
\item [(1).] $K^k_j$'s are pairwise disjoint and\; $\bigcup_{j,k}K^k_j=D$.  
\item [(2).] There holds \begin{align}\label{3.5}|K^k_j|\approx |\hat K^k_j|\approx s^{-k(2n+2)}\delta^{2n+2}.\end{align} \end{enumerate}
\end{lemma}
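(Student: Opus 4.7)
The plan is to treat the two items separately, noting that (1) is essentially combinatorial and inherits its structure from the dyadic system $\mathcal Q$ on $bD$, while (2) requires the only analytic input, namely the Ball-Box Lemma~\ref{BallBox} applied in the $C^2$ category.

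For (1), I would first verify that the tents themselves enjoy the nested-or-disjoint property of Lemma~\ref{HK1}(3). Membership in $\hat K_j^k$ is governed by (i) $\pi(z)\in Q_j^k$ and (ii) $|\pi(z)-z|<s^{-2k}\delta^2$; since the depth threshold in (ii) is strictly decreasing in $k$ (as $s>1$), the dichotomy $\hat K_{j_2}^{k_2}\subset\hat K_{j_1}^{k_1}$ or $\hat K_{j_2}^{k_2}\cap\hat K_{j_1}^{k_1}=\emptyset$ for $k_2\geq k_1$ follows directly from the corresponding boundary dichotomy on the $Q_j^k$'s. Pairwise disjointness of the kubes then splits into cases: within a single generation, by disjointness of distinct parent tents; across generations, by the explicit removal in the definition of a kube. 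For the covering $\bigcup_{j,k}K_j^k=D$, I would take $z\in D$ and consider the largest $k\geq 0$ for which $z\in \hat K_j^k$ for some $j$: such a maximal $k$ exists because $s^{-2k}\delta^2\to 0$, so any $z$ with $\operatorname{dist}(z,bD)>0$ is excluded from every sufficiently fine tent; if no such $k$ exists, then by construction $z\in K^{-1}$; otherwise $z\in K_j^k$ since $z$ lies in $\hat K_j^k$ but not in any of its children.

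For the size estimate $|\hat K_j^k|\approx s^{-(2n+2)k}\delta^{2n+2}$, I would first establish $\sigma(Q_j^k)\approx s^{-2kn}\delta^{2n}$: Lemma~\ref{HK1}(4) sandwiches $Q_j^k$ between two horizontal balls of radii comparable to $s^{-k}\delta$, and Lemma~\ref{BallBox} sandwiches these between boxes whose surface measure is comparable to $r^{2n}$ (as already noted in the excerpt). Since $D$ is of class $C^2$, the map $(\zeta,t)\mapsto \zeta-t\nu(\zeta)$, with $\nu$ the inward unit normal, is a $C^1$ diffeomorphism of a one-sided collar onto a tubular neighborhood of $bD$, with Jacobian $1+O(t)$. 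Parametrizing $\hat K_j^k$ by this collar and integrating $t\in(0,s^{-2k}\delta^2)$ against $\zeta\in Q_j^k$ yields
\[
|\hat K_j^k|\approx \sigma(Q_j^k)\cdot s^{-2k}\delta^2 \approx s^{-(2n+2)k}\delta^{2n+2}.
\]

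It remains to pass from $|\hat K_j^k|$ to $|K_j^k|$. The inequality $|K_j^k|\leq|\hat K_j^k|$ is immediate. For the reverse, Lemma~\ref{HK1}(5) bounds the number of child tents $\hat K_l^{k+1}$ inside $\hat K_j^k$ by a universal constant $N$, and each such child has volume $\leq C_1 s^{-(2n+2)(k+1)}\delta^{2n+2}$ by the estimate just established. Their combined measure is therefore at most $C_1 N s^{-(2n+2)}|\hat K_j^k|$, which is strictly less than $\tfrac12|\hat K_j^k|$ provided the parameter $s$ is taken sufficiently large at the outset (a permissible choice in the Hyt\"onen-Kairema construction). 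The main obstacle is the low regularity: one must verify that the tubular neighborhood parametrization, the Jacobian control, and the asymptotic $\sigma(B(\zeta,r))\approx r^{2n}$ all remain valid for merely $C^2$ defining functions, and this is exactly what the Balogh-Bonk Ball-Box Lemma~\ref{BallBox} is designed to deliver.
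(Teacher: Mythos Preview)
The paper does not give its own proof of this lemma; it is quoted from \cite{HWW2020}. Your argument is essentially correct and follows the natural route, but the lower bound $|K_j^k|\gtrsim|\hat K_j^k|$ deserves a second look. Your child-counting step needs $C_1 N s^{-(2n+2)}<\tfrac12$, which you secure by taking $s$ large; however, the constant $N$ in Lemma~\ref{HK1}(5) itself grows with $s$ (of order $s^{2n}$, from the doubling dimension of $(bD,d_H)$), so one must check that the product $N s^{-(2n+2)}$ still decays---it does, like $s^{-2}$, but you have not said so.

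A cleaner route avoids this bookkeeping entirely. Since the boundary cubes $\{Q_l^{k+1}:Q_l^{k+1}\subset Q_j^k\}$ partition $Q_j^k$ by Lemma~\ref{HK1}(2)--(3), the union of child tents removed in Definition~\ref{kube} is \emph{exactly} $\{z:\pi(z)\in Q_j^k,\ |\pi(z)-z|<s^{-2(k+1)}\delta^2\}$, and hence
\[
K_j^k=\bigl\{z:\pi(z)\in Q_j^k,\ s^{-2(k+1)}\delta^2\le |\pi(z)-z|<s^{-2k}\delta^2\bigr\}.
\]
Your collar parametrization then gives $|K_j^k|\approx \sigma(Q_j^k)\cdot s^{-2k}\delta^2(1-s^{-2})\approx |\hat K_j^k|$ for any fixed $s>1$, with no need to enlarge $s$.
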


This creates a tree structure on the kubes. We use the notation  $K_j^k \preceq K_{j_0}^{k_0}$ to indicate that $K_j^k$ is a descendent of  $K_{j_0}^{k_0};$ that is, $p_j^k \in Q_{j_0}^{k_0}.$

\subsection{The Kerzman-Stein-Ligocka Operator}

Here we review the construction of the so-called \emph{Kerzman-Stein-Ligocka operator}, which we denote by $T_\varepsilon$, as detailed in \cites{LS2012, WW2021}. The parameter $\varepsilon$ should be thought of as fixed; it will play no role until a certain step in the proof of Theorem \ref{CommutatorBoundedMain} where a particular choice of $\varepsilon$ must be made. The integral operator $T_\varepsilon$ should be thought of as a ``solid Cauchy-like integral" in several complex variables, and we
denote the kernel of the operator $T_\varepsilon$ by $T_\varepsilon(z,w).$ See \cite{Ra1986} and \cite{LS2012}
for the complete construction.

The starting point is the following function related to the Levi polynomial:
\begin{equation} \label{20220924eq02}
g(w,z)= -\rho(w)-\chi(w,z)(P_w(z))+(1-\chi(w,z))|w-z|^2.
\end{equation} 
Here, $P_w(z)= \sum_{j=1}^{n} \frac{\partial \rho}{\partial w_j}(w)(z_j-w_j)+\frac{1}{2} \sum_{j,k=1}^{n} \frac{\partial^2 \rho}{ \partial w_j \partial w_k}(w)(z_j-w_j)(z_k-w_k)$ is the usual Levi polynomial at $w$, and $\chi(w,z)=\chi(|w-z|^2)$ is a $C^\infty$ cutoff function that satisfies $\chi \equiv 1$ when $|w-z|^2\leq  \mu/2$ and $\chi \equiv 0$ when $|w-z|^2 \geq \mu$, $\mu$ an appropriately chosen small constant. However, this function is not quite right for our purposes, because the second derivatives of $\rho$ are not smooth enough (see \cite{LS2012} for a more detailed explanation for the necessity of the smoothness). To mitigate this difficulty, for each $\varepsilon>0$ sufficiently small (depends on $D$), we introduce a matrix of approximating functions $\left\{ \tau_{j,k}^\varepsilon \right \}_{j,k=1}^{n}$, where each $\tau_{j,k}^\varepsilon$ belongs to $C^2(\overline{D})$ and satisfies 

$$\sup_{w \in \overline{D}} \left|\frac{\partial^2 \rho}{\partial w_j w_k}(w)- \tau_{j,k}^{\varepsilon}(w) \right|<\varepsilon.$$

Then we define, 
$$
P_w^\varepsilon(z)= \sum_{j=1}^{n} \frac{\partial \rho}{\partial w_j}(w)(z_j-w_j)+\frac{1}{2} \sum_{j,k=1}^{n} \tau_{j,k}^\varepsilon(w)(z_j-w_j)(z_k-w_k),
$$
and
$$
g_\varepsilon(w,z)= -\rho(w)-\chi(w,z)(P_w^\varepsilon(z))+(1-\chi(w,z))|w-z|^2.
$$

The function $g_\varepsilon$ plays a key role in the construction of $T_\varepsilon.$ In particular, $T_\varepsilon=T_\varepsilon^1+T_\varepsilon^2$, where $T_\varepsilon^1$ has the following explicit form:
$$T_\varepsilon^1 f(z)= \frac{1}{(2 \pi \mathrm{i})^n } \int_{D} ( \overline \partial_w \eta_\varepsilon)^n f(w), \quad f \in L^1(D),$$
where 
$$
\eta_\varepsilon(w,z)=\frac{\chi \left(\sum\limits_j \frac{\partial \varrho(w)}{\partial w_j} dw_j-\frac{1}{2} \sum\limits_{j, k} \tau_{j, k}^\varepsilon (w) (w_k-z_k) dw_j \right)+(1-\chi) \sum\limits_j  \left(\overline{w}_j-\overline{z}_j \right) dw_j}{g_\varepsilon (w, z)},
$$
is a $(1,0)$ form in $w$, and a computation yields $( \overline \partial_w \eta_\varepsilon)^n$ is a volume form on $D$. It can be shown that the modulus of the integral kernel of $T_\varepsilon^1$ is dominated by a constant multiple of $\frac{1}{|g_\varepsilon(w,z)|^{n+1}}$ and that for small $\varepsilon,$ 
\begin{equation} \label{20200914eq01}
|g_\varepsilon(w,z)| \approx |g(w,z)| \approx |\rho(z)|+|\rho(w)|+ |\text{Im} \langle \overline\partial \rho(w),w-z \rangle|+|w-z|^2 
\end{equation} 
(see \cite[Proposition 2.1]{LS2012}). The operator $T_\varepsilon^1$ reproduces members of $A^1(D)$ , but it fails to produce holomorphic functions if $f$ is not holomorphic. For this reason, it is necessary to add a correction operator $T_\varepsilon^2$ that is obtained through $\overline \partial$- methods so that the operator $T_\varepsilon$ is a (non-orthogonal) projection on $L^2(D)$. The integral kernel of $T_\varepsilon^2$ can be shown to be bounded and continuous on $\overline{D} \times \overline{D}.$ Therefore, the boundedness of $T_\varepsilon$ on $L^2(D)$ (or $L^p(D)$) can be demonstrated via Schur's test using \eqref{20200914eq01}.

\subsection{The Kobayashi Metric} We first recall the definition of the Kobayashi metric. 
\begin{defin} [see, e.g., \cite{Li1992,Li1994,LiLu1994, Z1992}]
For $z \in D$ and $\xi \in \mathbb{C}^n$ the infinitesimal form of the Kobayashi metric is
$$F_D(z,\xi) := \inf\{\alpha>0: \exists \  \phi:\mathbb{D} \rightarrow D \text{ holomorphic}, \phi(0)=z, \phi'(0)= \xi/\alpha\}.$$

The Kobayashi distance from $z$ to $w$ is defined as follows:
$$d_K(z,w) := \inf\left\{\int_{0}^{1}F_D(\gamma(t),\gamma'(t)) \, dt: \substack{\gamma:[0,1]\rightarrow D \text{ is } C^1 \\ \\ \gamma(0)=z, \ \gamma(1)=w}\right\}, \quad z, w \in D.$$
We let $E(z, r)$ denote the Kobayashi ball centered at $z$ with radius $r$.
\end{defin}

The Kobayashi distance defines a metric on $D.$ In the case of the unit ball, the Kobayashi and Bergman distances coincide, and they are known to be equivalent on a smoothly bounded, strongly pseudoconvex domain. 

We first remark that the Kobayashi balls are closely related to the dyadic system constructed in Lemma \ref{DyaSystem} (see, Definition \ref{kube} for the definition of ``kube'').
\begin{lemma}[Hu, Huo \cite{HH2021}]\label{comparablekobayashiball}
There exists a constant $\beta>0$ such that for any $j,k, \ell$ and kube $K_j^k \in \mathcal{T}_{\ell}$ the following containment holds:
$$ K_j^k \subset E(c_j^k,\beta).$$
Moreover, there holds $|K_j^k| \approx |E(c_j^k,\beta)|$, where the implicit constants only depend on $D$ and the parameters $s, \delta$ in the dyadic system.
\end{lemma}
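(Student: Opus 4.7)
The plan is to combine the Balogh--Bonk characterization \cite{BB2000} of the Kobayashi metric for $C^2$ strongly pseudoconvex domains with a sharp understanding of the ``shape'' of the kube $K_j^k$. First I would quantify the dimensions of $K_j^k$. By \eqref{DyadicTents} and Lemma \ref{HK1}(4), any $w \in \widehat K_j^k$ satisfies $d_H(\pi(w),p_j^{(k)}) \lesssim s^{-k}\delta$ and $-\rho(w) \asymp \operatorname{dist}(w,bD) \le s^{-2k}\delta^2$. The crucial lower depth bound comes from the exclusion of all child tents: the indices $l$ with $p_l^{(k+1)} \in Q_j^k$ cover $Q_j^k$ (Lemma \ref{HK1}(2)--(3)), so if $\operatorname{dist}(w,bD) < s^{-2(k+1)}\delta^2$ and $\pi(w) \in Q_l^{k+1}$ for one such $l$, then $w \in \widehat K_l^{k+1}$, contradicting $w \in K_j^k$. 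Hence $-\rho(w) \asymp s^{-2k}\delta^2$ uniformly for $w \in K_j^k$; the same comparison holds for $-\rho(c_j^{(k)})$ by Definition \ref{kube}.

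Next, to establish $K_j^k \subset E(c_j^{(k)},\beta)$, I would invoke the Balogh--Bonk infinitesimal estimate
\[
F_D(z,\xi) \;\asymp\; \frac{|\xi_H|}{\sqrt{-\rho(z)}} + \frac{|\xi_N|}{-\rho(z)},
\]
valid for $z$ in a fixed neighborhood of $bD$, with the splitting $\xi = \xi_H + \xi_N$ taken at $\pi(z)$. From an arbitrary $w \in K_j^k$ I would run a two-leg piecewise smooth curve to $c_j^{(k)}$: first a ``horizontal'' leg of Euclidean length $\lesssim s^{-k}\delta$ staying at fixed depth $\asymp s^{-2k}\delta^2$, then a ``normal'' leg of Euclidean length $\lesssim s^{-2k}\delta^2$ that matches the depth of $w$. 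The horizontal leg contributes $\lesssim (s^{-k}\delta)/\sqrt{s^{-2k}\delta^2} \asymp 1$ to the Kobayashi length, while the normal leg contributes $\lesssim \log s^2 \asymp 1$ (since the ratio of its endpoints' depths is bounded by an absolute constant depending only on $s$). This yields a universal bound $d_K(c_j^{(k)},w) \le \beta$ with $\beta$ depending only on $D$ and $s,\delta$. The finitely many ``deep'' kubes sitting away from a fixed neighborhood of $bD$ are handled directly by the local boundedness of $d_K$ on relatively compact subsets of $D$.

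Finally, for the volume comparison, Lemma \ref{DyaSystem}(2) gives $|K_j^k| \asymp s^{-k(2n+2)}\delta^{2n+2}$. On the other hand, integrating the Balogh--Bonk ``box'' shape of $E(c_j^{(k)},\beta)$, which has complex tangential extent $\asymp \sqrt{-\rho(c_j^{(k)})}$ in the $(2n{-}2)$-dimensional horizontal directions and extent $\asymp -\rho(c_j^{(k)})$ in the two remaining (real tangential and real normal) directions, produces Lebesgue volume
\[
|E(c_j^{(k)},\beta)| \;\asymp\; \bigl(-\rho(c_j^{(k)})\bigr)^{n+1} \;\asymp\; s^{-k(2n+2)}\delta^{2n+2},
\]
matching $|K_j^k|$. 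The main technical obstacle is the two-sided depth bound $-\rho(w)\asymp s^{-2k}\delta^2$ on $K_j^k$: the upper bound is immediate from the definition of $\widehat K_j^k$, but the lower bound requires the covering property of the children's boundary cubes $Q_l^{k+1} \subset Q_j^k$. Once this is in place, the Balogh--Bonk shape of Kobayashi balls reduces both claims to an elementary path-length computation and a Lebesgue volume count.
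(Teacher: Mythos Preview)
The paper does not supply its own proof of this lemma; it simply attributes the result to Hu--Huo \cite{HH2021}. So there is no in-paper argument to compare against, and your proposal should be judged on its own merits.

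Your approach is sound and the key ingredients are correctly identified. The two-sided depth bound $-\rho(w)\asymp s^{-2k}\delta^2$ on $K_j^k$ is indeed the heart of the matter, and your argument for the lower bound via the covering property of the children $Q_l^{k+1}$ is exactly right. The volume comparison is also handled correctly.

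One simplification is available for the containment step. Rather than invoking the infinitesimal Balogh--Bonk estimate and building a two-leg path (which requires some care: a curve at ``fixed depth'' does not have purely horizontal tangent, so you would need to control a small normal correction along the horizontal leg), you can use the global distance comparison already recorded in the paper as Theorem~\ref{BBTheorem}. For $z=c_j^{(k)}$ and $w\in K_j^k$ you have $d_H(\pi(z),\pi(w))\lesssim s^{-k}\delta$ (both projections lie in $Q_j^k\subset B(p_j^{(k)},Cs^{-k}\delta)$) and $\delta(z)^{1/2},\delta(w)^{1/2}\asymp s^{-k}\delta$, so
\[
k(z,w)=2\log\!\left[\frac{d_H(\pi(z),\pi(w))+\max\{\delta(z)^{1/2},\delta(w)^{1/2}\}}{\sqrt{\delta(z)^{1/2}\delta(w)^{1/2}}}\right]\lesssim 1,
\]
and Theorem~\ref{BBTheorem} gives $d_K(z,w)\le k(z,w)+C\le\beta$ directly, with no path construction needed. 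Your infinitesimal route works too, but this is cleaner and avoids the technicality of producing an admissible curve inside $D$.
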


\begin{cor} [Hu, Huo \cite{HH2021}]\label{submean} There exists a constant $C>0$ such that the following sub-mean value property holds for any non-negative plurisubharmonic function $f$ and $z \in K_j^k$:
$$f(z) \leq \frac{C}{|K_j^k|} \int_{E(c_j^k,\frac{1+\beta}{2})} f(w) \, dV(w).$$
\end{cor}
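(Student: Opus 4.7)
The plan is to reduce the statement to the classical sub-mean value inequality for plurisubharmonic functions on a polydisc, after inscribing a suitable Euclidean polydisc $P$ centered at $z$ inside $E(c_j^k,\tfrac{1+\beta}{2})$ whose volume is comparable to $|K_j^k|$.

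First I would use Lemma \ref{comparablekobayashiball} to record that $d_K(z,c_j^k)\leq \beta$ for every $z\in K_j^k$. Under the (harmless) assumption that the parameters $s,\delta$ governing the dyadic system have been chosen so that the resulting $\beta$ is strictly less than $1$, set $\varepsilon:=\tfrac{1-\beta}{2}>0$; the triangle inequality for $d_K$ then yields the containment $E(z,\varepsilon)\subset E(c_j^k,\tfrac{1+\beta}{2})$.

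Next I would exploit the Ball-Box principle of Balogh--Bonk (Lemma \ref{BallBox}), transferred from the boundary to interior Kobayashi balls in the standard way, to produce a Euclidean polydisc $P\subset E(z,\varepsilon)$ centered at $z$, aligned with the complex normal and complex tangential frame at $\pi(z)$, whose dimensions are of order $\operatorname{dist}(z,bD)$ in the two real normal directions and $\operatorname{dist}(z,bD)^{1/2}$ in the $2(n-1)$ real tangential directions. This gives $|P|\approx |E(z,\varepsilon)|$. Moreover, since the Kobayashi volume function $r\mapsto |E(\,\cdot\,,r)|$ is doubling on $C^2$ strongly pseudoconvex domains and $d_K(z,c_j^k)\leq \beta$, both $|E(z,\varepsilon)|$ and $|E(c_j^k,\tfrac{1+\beta}{2})|$ are comparable to $|E(c_j^k,\beta)|\approx |K_j^k|$, the last equivalence being Lemma \ref{comparablekobayashiball}.

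Finally I would apply the sub-mean value property for plurisubharmonic functions on the polydisc $P$, namely $f(z)\leq |P|^{-1}\int_P f\,dV$. Since $P\subset E(c_j^k,\tfrac{1+\beta}{2})$ and $f\geq 0$, one may enlarge the domain of integration to $E(c_j^k,\tfrac{1+\beta}{2})$, and the comparability $|P|\approx |K_j^k|$ absorbs the prefactor to yield the desired constant $C$. The principal technical point is the polydisc/Kobayashi-ball volume comparison used in the middle step: this is exactly where the hypothesis of only $C^2$ smoothness is felt, since the classical arguments typically rely on higher regularity to obtain sharp metric estimates, whereas here one must appeal directly to the $C^2$ version of the Balogh--Bonk estimates.
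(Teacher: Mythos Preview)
Your argument is correct and is the standard route for such estimates. The paper itself does not prove this corollary but simply cites \cite{HH2021}; the approach there is essentially what you outline: use the containment $K_j^k\subset E(c_j^k,\beta)$ together with the polydisc comparison (Lemma~\ref{PolyContain} in the present paper) to inscribe an anisotropic polydisc $P$ centered at $z$ inside $E(c_j^k,\widetilde\beta)$ with $|P|\approx\delta(z)^{n+1}\approx|K_j^k|$, and then apply the classical sub-mean value inequality for plurisubharmonic functions on $P$.

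One small remark: your assumption $\beta<1$ is not extraneous but is in fact forced by the statement itself, since $\widetilde\beta=(1+\beta)/2>\beta$ is needed to leave room for the ball $E(z,\varepsilon)$; the paper tacitly relies on this as well. That this can always be arranged by refining the dyadic grid is exactly the content of the discussion in Section~5.3, so you are on solid ground. You might also streamline the middle step by citing Lemma~\ref{PolyContain} directly rather than invoking the boundary Ball-Box lemma and ``transferring'' it, since the paper already records the needed interior version.
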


\begin{rem}
The quantity $|K_j^k|$ in the above corollary can be replaced by $|E(c_j^k,\frac{1+\beta}{2})|,$ and the resulting constant will depend only on fixed parameters. For convenience in what follows, $\widetilde{\beta}=  \frac{1+\beta}{2}.$ 
\end{rem}

\medskip 

The following result by Balogh and Bonk relates the Kobayashi metric to the horizontal metric and is fundamental to our work: To state the result, we introduce the shorthand notation $\delta(z)=\operatorname{dist}_H(z, bD)$ for $z \in D$ in the sense of Euclidean distance. We also define the following function 
$$
k(z,w)= 2 \log \left[ \frac{d_H(\pi(z),\pi(w))+\max\{ \delta(z)^{1/2}, \delta(w)^{1/2}\}}{\sqrt{\delta(z)^{1/2} \delta(w)^{1/2}}}  \right].
$$
We remark that the normal projection $\pi(z)$ is uniquely defined only for $z$ in a neighborhood of the boundary, while for $z$ being far away from the boundary, different choices of points $\pi(z)$ that minimize $\operatorname{dist}(z, bD)$ only impact $k(z,w)$ up to a bounded, additive constant. Therefore, we adapt the same convention as \cite{BB2000} and write $\pi(z)$ as if it were a uniquely determined function on $D.$

\begin{thm}[Balogh, Bonk \cite{BB2000}*{Corollary 1.3}]\label{BBTheorem}
Let $D$ be strongly pseudoconvex with $C^2$ boundary. Then there exists $C>0$ so that for all $z, w  \in  D,$
$$k(z,w)-C \leq d_K(z,w) \leq k(z,w)+C.$$

\end{thm}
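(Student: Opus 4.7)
I would prove the two inequalities $d_K(z,w) \leq k(z,w)+C$ and $d_K(z,w) \geq k(z,w)-C$ separately. The function $k(z,w)$ is engineered to capture the two regimes that drive the Kobayashi metric in strong pseudoconvexity: a ``vertical'' contribution in the complex normal direction, scaled by $\delta(z),\delta(w)$, and a ``horizontal'' contribution along $bD$, scaled by $d_H(\pi(z),\pi(w))$. The natural model to compare with is the Siegel half-space, on which the Kobayashi metric can be computed in closed form and produces exactly a quantity of the shape of $k$.

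For the upper bound, I would exhibit an explicit curve from $z$ to $w$ whose Kobayashi length is bounded by $k(z,w) + O(1)$. With the reference scale $h := d_H(\pi(z),\pi(w)) + \max\{\delta(z)^{1/2}, \delta(w)^{1/2}\}$, decompose the path into three pieces: (i) a normal segment lifting $z$ to boundary distance $\sim h^2$, (ii) a horizontal segment at boundary distance $\sim h^2$ connecting the two lifted points, and (iii) a normal segment descending to $w$. Each piece is estimated by locally sandwiching $D$ between complex-ellipsoid models produced from the Levi polynomial \eqref{20220924eq02}; on these models the infinitesimal Kobayashi metric is explicit, with size $\sim 1/\delta$ in the complex normal direction and $\sim 1/\delta^{1/2}$ in the complex tangential direction. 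Integrating along each segment contributes a logarithm, and an elementary accounting reproduces exactly the expression $k(z,w) + O(1)$.

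For the lower bound, the tool is a family of plurisubharmonic peak functions $u_\zeta\colon D \to [-1,0]$, one for each $\zeta \in bD$, enjoying the quantitative decay $-u_\zeta(z) \gtrsim d_H(\pi(z),\zeta)^2 + \delta(z)$ for $z$ near $\zeta$. These are built from the Levi polynomial $P_\zeta$ combined with a convex quadratic correction $A|z-\zeta|^2$ to force plurisubharmonicity, then exponentiated to stay bounded. For any holomorphic disk $\phi\colon\mathbb{D}\to D$ with $\phi(0)=z$, the composition $u_{\pi(z)}\circ\phi$ is subharmonic on $\mathbb{D}$; a Hopf/Schwarz-type estimate at the origin converts its boundary decay into a pointwise lower bound on $F_D(z,\xi)$. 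For a candidate curve $\gamma\colon[0,1]\to D$ from $z$ to $w$, one splits $\gamma$ at an interior point where $\delta$ is maximized and applies the disk estimate to each half, using the peak function based at $\pi(z)$ on one side and at $\pi(w)$ on the other, to obtain $\int_0^1 F_D(\gamma,\gamma')\,dt \geq k(z,w)-C$.

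The main obstacle is the peak-function construction under only $C^2$ regularity of $\rho$: the sharp quadratic-in-tangential, linear-in-normal decay of $-u_\zeta$ is classically proved using smoothness of the second fundamental form of $bD$, whereas here one must work with the $\varepsilon$-mollified second derivatives $\tau_{j,k}^\varepsilon$ of \S 2.2 and check that the resulting $u_\zeta$'s enjoy decay bounds uniform in $\zeta\in bD$. A second subtlety for the lower bound is ruling out the possibility that a holomorphic disk ``cheats'' by moving in the complex tangential direction faster than the ambient horizontal metric allows; this is ultimately a consequence of the quadratic tangential decay built into $-u_\zeta$ through the strong pseudoconvexity and the convex quadratic correction.
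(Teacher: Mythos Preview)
The paper does not give its own proof of this statement: Theorem~\ref{BBTheorem} is quoted verbatim from Balogh--Bonk \cite{BB2000}*{Corollary 1.3} and used throughout as a black box, so there is nothing to compare your proposal against within the paper itself.

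That said, your outline is broadly faithful to the strategy of the original source. Balogh and Bonk first prove sharp two-sided bounds on the infinitesimal form $F_D(z,\xi)$ in terms of the splitting $\xi = \xi_N + \xi_H$ (their main theorem), with the upper bound coming from explicit holomorphic disks in local models and the lower bound from a plurisubharmonic peak-function construction of Sibony type built from the Levi polynomial plus a convex quadratic correction; Corollary~1.3 is then obtained by integrating these infinitesimal bounds along well-chosen curves. Your three-segment path (normal, horizontal at height $\sim h^2$, normal) for the upper bound and your peak-function/Schwarz-lemma argument for the lower bound are exactly this program, organized at the level of the integrated distance rather than the infinitesimal metric. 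The $C^2$-regularity concern you raise is genuine but is addressed in \cite{BB2000} directly with $\rho$ itself (not the mollifications $\tau_{j,k}^\varepsilon$, which are introduced in the present paper only for the Kerzman--Stein--Ligocka operator); uniformity in the base point $\zeta \in bD$ follows from compactness of $bD$ and continuity of the Levi form.
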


In light of the size estimate on the function $g$ and similar to \cite{Li1992}, we define the function
\begin{equation} \label{20220912eq01}
F(z,w):= |\rho(w)|+ |\rho(z)|+|\text{Im}\langle \overline \partial \rho(w),w-z\rangle |+|w-z|^2. 
\end{equation}
On account of \eqref{20200914eq01}, it is clear that $F(z, w) \approx |g(w, z)|$. We remark that $F(z, w)$ given here is not exactly the same as the one defined in \cite{Li1992}, but the two are clearly comparable.  We also remark that the smoothness of the domain implies the symmetric estimate $F(z,w) \approx F(w,z).$ Finally, we note that the following equivalence holds: for $\epsilon>0$ sufficiently small,
$$
F(z, w) \approx |\rho(w)|+ |\rho(z)|+|\langle \overline \partial \rho(w),w-z\rangle |+|w-z|^2.
$$
 This equivalence can be obtained by using the strong pseudoconvexity of $D$ to show that $ |\text{Re}\langle \overline \partial \rho(w),w-z\rangle | \lesssim |\rho(w)|+|\rho(z)|+|w-z|^2.$
It is well-known that in the $C^\infty$ case, the modulus of the Bergman kernel behaves like $\frac{1}{F(z,w)^{n+1}}$ near the boundary diagonal.

We have the following analog of the Rudin-Forelli estimates on the unit ball for a strongly pseudoconvex domain, as well as a ``little-o" version. Similar results can be found in \cite{Pe1994}*{Lemma 2.7} or \cite{Ra1986}*{Lemma 7.3}.  

\begin{prop}\label{RudinForelli}
Given any $a>-1$ and $b>0$, there exists a constant $C$ so the following estimate holds uniformly for $z \in D$:
$$\int_{D} \frac{|\rho(w)|^{a}}{F(z,w)^{n+1+a+b}} \, dV(w) \leq C |\rho(z)|^{-b}.$$
Moreover, given $a>-1$ and $b>0$, there is a function $C(r): \mathbb{R}^+ \rightarrow \mathbb{R}^{+}$ satisfying $C(r)\rightarrow 0$ as $r \rightarrow \infty$ and
$$\int_{D \setminus E(z,r)} \frac{|\rho(w)|^{a}}{F(z,w)^{n+1+a+b}} \, dV(w) \leq C(r) |\rho(z)|^{-b}$$
for all $z \in D$.
\end{prop}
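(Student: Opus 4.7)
The plan is to prove the first (global) estimate by reducing it to a standard iterated integral in coordinates adapted to $bD$, and to derive the little-o refinement by pigeonholing a small power of $F(z,w)$ and exploiting the exponential growth of $F$ along Kobayashi-annular shells supplied by Theorem~\ref{BBTheorem}.

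\emph{The first estimate.} If $|\rho(z)|$ is bounded below the claim is immediate, so I may assume $z$ is close to $bD$. Localize to a neighborhood of $\pi(z)$ and choose coordinates $(w_1,\ldots,w_n)$ in which $H_{\pi(z)}$ is spanned by $\partial/\partial w_1,\ldots,\partial/\partial w_{n-1}$ and $\partial/\partial w_n$ is the complex normal direction. With $t\in\mathbb{R}^{2n-2}$ denoting the real coordinates of $(w_1-z_1,\ldots,w_{n-1}-z_{n-1})$, $\tau_2=\mathrm{Im}(w_n-z_n)$, and $\tau_1\approx-\rho(w)$, the expression \eqref{20220912eq01} becomes
$$F(z,w)\approx|\rho(z)|+\tau_1+|\tau_2|+|t|^2,\qquad |\rho(w)|\approx\tau_1.$$
The integral (the contribution outside the local neighborhood being trivially bounded) then decouples into three one-dimensional integrations. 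Polar coordinates in $t\in\mathbb{R}^{2n-2}$ yield a factor $\lesssim R^{-(a+b+2)}$ with $R=|\rho(z)|+\tau_1+|\tau_2|$; integration in $\tau_2$ yields $\lesssim (R')^{-(a+b+1)}$ with $R'=|\rho(z)|+\tau_1$; and the substitution $\tau_1=|\rho(z)|s$ converts the remaining weighted integral into a bounded beta-type integral, producing the factor $|\rho(z)|^{-b}$. The hypotheses $a>-1$ and $b>0$ are precisely what is needed for convergence at $s=0$ and $s=\infty$ respectively.

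\emph{The little-o estimate.} Decompose
$$D\setminus E(z,r)=\bigsqcup_{k=0}^\infty A_k(z),\qquad A_k(z):=E(z,r+k+1)\setminus E(z,r+k).$$
A standard consequence of Theorem~\ref{BBTheorem}, together with \eqref{20220912eq01} and Lemma~\ref{BallBox}, is the lower bound
$$F(z,w)\gtrsim d_H(\pi(z),\pi(w))^2+\max\{\delta(z),\delta(w)\}\approx e^{d_K(z,w)}\sqrt{\delta(z)\delta(w)},$$
so on $A_k(z)$ one has $F(z,w)\gtrsim e^{r+k}\sqrt{\delta(z)\delta(w)}$. Now pick $\varepsilon>0$ small enough that $a'':=a-\varepsilon/2>-1$ and $b'':=b-\varepsilon/2>0$ and split
$$\frac{|\rho(w)|^a}{F(z,w)^{n+1+a+b}}=F(z,w)^{-\varepsilon}\cdot\frac{|\rho(w)|^a}{F(z,w)^{n+1+a''+b''}}.$$
Bounding $F(z,w)^{-\varepsilon}\lesssim e^{-\varepsilon(r+k)}\delta(z)^{-\varepsilon/2}\delta(w)^{-\varepsilon/2}$ on $A_k(z)$ and absorbing $\delta(w)^{-\varepsilon/2}$ into $|\rho(w)|^a$ via $|\rho(w)|\approx\delta(w)$ gives
$$\int_{A_k(z)}\frac{|\rho(w)|^a\,dV(w)}{F(z,w)^{n+1+a+b}}\lesssim e^{-\varepsilon(r+k)}\delta(z)^{-\varepsilon/2}\int_{D}\frac{|\rho(w)|^{a''}\,dV(w)}{F(z,w)^{n+1+a''+b''}}\lesssim e^{-\varepsilon(r+k)}|\rho(z)|^{-b},$$
where the last step applies the first estimate with exponents $(a'',b'')$ and uses $\delta(z)\approx|\rho(z)|$. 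Summing the geometric series produces $C(r)\lesssim e^{-\varepsilon r}\to 0$.

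\emph{Main obstacle.} The iterated-integral computation in the first estimate is routine. The nontrivial point is verifying the inequality $F(z,w)\gtrsim d_H(\pi(z),\pi(w))^2+\max\{\delta(z),\delta(w)\}$: the second summand is immediate from the definition of $F$, while the first requires showing that $|w-z|^2+|\mathrm{Im}\langle\bar\partial\rho(w),w-z\rangle|$ controls both the horizontal $|Z_H|^2$ and the normal $|Z_N|$ components (in the sense of Lemma~\ref{BallBox}) of $\pi(w)-\pi(z)$, modulo corrections of size $\delta(z)+\delta(w)$ that are already absorbed in $F$.
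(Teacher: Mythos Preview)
Your proposal is correct. The paper's own proof is essentially a pointer: it refers to the coordinate system of Peloso and the argument of Green--Wagner and leaves the details to the reader. Your treatment of the first (global) estimate is exactly the Peloso-style computation the paper has in mind, so there is nothing to add there.

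For the little-o refinement you supply substantially more than the paper does. Your Kobayashi-shell decomposition together with the pigeonholed power of $F$ is clean and works; the key input is the lower bound $F(z,w)\gtrsim e^{d_K(z,w)}\sqrt{\delta(z)\delta(w)}$, which---as you note in your ``main obstacle''---amounts to $d_H(\pi(z),\pi(w))^2+\max\{\delta(z),\delta(w)\}\lesssim F(z,w)$. This is precisely the inequality the paper later establishes in the proof of Lemma~\ref{metric estimate} (via Lemma~\ref{BallBox}), so you are effectively anticipating that lemma. Since the proof of Lemma~\ref{metric estimate} does not use Proposition~\ref{RudinForelli}, there is no circularity; you could simply invoke it, or reproduce the two-line Ball--Box argument as you sketch. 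An alternative route (perhaps closer in spirit to the Green--Wagner reference) is to use Lemma~\ref{PolyContain} to replace $D\setminus E(z,r)$ by the complement of the polydisk $P(z,a(r)\delta(z))$ with $a(r)\to\infty$, and then estimate directly in the same coordinates used for the first part; this avoids the shell summation but is morally equivalent. Either way, your argument is sound.
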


\begin{proof}
These integral estimates can be obtained by using the coordinate system in \cite{Pe1994}*{Lemma 2.6} (just replace $C^\infty$ diffeomorphism by $C^2$ diffeomorphism) combined with the argument given in the proof of \cite{GW2021}*{Lemma 3.7}. Hence, we would like to leave the details to the interested reader. 
\end{proof}

We also need some information about the ``shape'' of Kobayashi balls on strongly pseudoconvex domains. In particular, a Kobayashi ball $E(z,r)$ is comparable in a precise sense to anisotropic polydisks that have radii $r$ in the complex normal directions and radii $r^{1/2}$ in the complex tangential directions. To be more precise, fix $z \in D$ sufficiently close to the boundary and write $z=(z_1,z_2,\dots,z_n)$ where $z_1 \in \mathbb{C}$ is the component of $z$ in the complex normal direction and $z_j$, $2 \leq j \leq n$ are the components of $z$ in the complex tangential directions. In this coordinate system, we will also make use of the shorthand $z'=(z_2,\dots,z_n) \in \mathbb{C}^{n-1},$ so $z=(z_1,z').$ Then, we define the polydisk 
$$
P(z,r):=\{w \in \mathbb{C}^n: |z_1-w_1|<r , |z_j-w_j|<r^{1/2}, 2 \leq j \leq n \}.
$$

\begin{lemma} \label{PolyContain}
There exist functions $a, A:  \mathbb{R}^+ \rightarrow \mathbb{R}^{+}$, such that for all $r>0$ and $z \in D$,  there holds:
$$P(z,a(r)\delta(z)) \subset E(z,r) \subset P(z,A(r) \delta(z)).$$
 Moreover, $a(r) \rightarrow \infty$ as $r \rightarrow \infty.$
\end{lemma}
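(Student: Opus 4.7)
The strategy is to pass through the Balogh--Bonk comparison (Theorem \ref{BBTheorem}), which replaces $d_K$ by the explicit quantity $k(z,w)$, and then to translate the resulting inequalities into the adapted coordinates $(z_1, z')$ using the Ball--Box description of horizontal balls (Lemma \ref{BallBox}). With these two ingredients the lemma reduces to elementary bookkeeping: controlling $\delta(w)$ by $|w_1 - z_1|$, and $d_H(\pi(z),\pi(w))$ by $|w' - z'|$.

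First I would dispose of the case where $\delta(z)$ is bounded below: on any compact subset of $D$ the Kobayashi metric is equivalent to the Euclidean metric (both are continuous and $d_K$ is non-degenerate away from $bD$), so the polydisk containments are trivial up to constants that can be absorbed into $a(r), A(r)$. Hence I assume $z$ lies in a collar of $bD$ where $\pi(z)$ is uniquely defined. In the adapted coordinates at $\pi(z)$, the $C^2$ regularity of $\rho$ gives $\delta(w) = \delta(z) - 2\operatorname{Re}(w_1 - z_1) + O(|w-z|^2)$, so whenever $|w_1-z_1| \le a\delta(z)$ and $|w_j - z_j| \le \sqrt{a\delta(z)}$ for $j \ge 2$ (with $a\delta(z)$ below a fixed threshold), we have $\delta(w) \asymp \delta(z)$.

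For the containment $P(z, a(r)\delta(z)) \subset E(z, r)$, project $z$ and $w$ onto $bD$: the point $\pi(w)$ lies in a box around $\pi(z)$ of tangential extent $\lesssim \sqrt{a\delta(z)}$ and normal extent $\lesssim a\delta(z) \lesssim (\sqrt{a\delta(z)})^2$, so Lemma \ref{BallBox} gives $d_H(\pi(z),\pi(w)) \lesssim \sqrt{a\delta(z)}$. Feeding these estimates into the defining formula for $k$ yields
$$
k(z,w) \le 2\log\!\left(\frac{C\sqrt{a\delta(z)}+C\sqrt{\delta(z)}}{c\sqrt{\delta(z)}}\right) \le C'\log(1+\sqrt{a}),
$$
whence $d_K(z,w) \le C'\log(1+\sqrt{a})+C''$ by Theorem \ref{BBTheorem}. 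Solving $C'\log(1+\sqrt{a})+C'' \le r$ for $a = a(r)$ produces a choice that tends to infinity with $r$. For the reverse inclusion I argue contrapositively. If $w \notin P(z, A\delta(z))$, then either (i) $|w_1 - z_1| \ge A\delta(z)$, forcing either $\delta(w) \gtrsim A\delta(z)$ or $|w-z|^2$ to dominate $\delta(z)$ and hence $k(z,w) \gtrsim \log A$; or (ii) $|w_j - z_j| \ge \sqrt{A\delta(z)}$ for some $j \ge 2$, in which case the lower containment in Lemma \ref{BallBox} yields $d_H(\pi(z),\pi(w)) \gtrsim \sqrt{A\delta(z)}$ and again $k(z,w) \gtrsim \log(1+\sqrt{A})$. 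Choosing $A = A(r)$ large enough makes $d_K(z,w) > r$.

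The one genuinely delicate point, and where I expect the main obstacle to lie, is that the normal/tangential splitting is defined pointwise on $bD$, so I must verify that the coordinate components $(w_1 - z_1, w' - z')$ in the frame at $\pi(z)$ still encode, up to fixed constants, the normal/tangential decomposition at $\pi(w)$; this is precisely what allows Lemma \ref{BallBox} to be invoked in both directions. The $C^2$ regularity of $\rho$ is exactly what underwrites this compatibility (the frame varies continuously, with moduli depending only on the $C^2$ norm of $\rho$), and once that is pinned down the remainder of the argument is a direct manipulation of the Balogh--Bonk formula.
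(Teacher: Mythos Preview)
Your approach is essentially the same as the paper's: both pass through the Balogh--Bonk estimate (Theorem \ref{BBTheorem}) and the Ball--Box Lemma \ref{BallBox}, bounding $d_H(\pi(z),\pi(w))$ and $\max\{\delta(z),\delta(w)\}$ in terms of $a$ and $\delta(z)$, then reading off a logarithmic bound on $k(z,w)$ that forces $a(r)$ to grow (exponentially) with $r$. The paper in fact only proves the ``Moreover'' clause explicitly, citing the two polydisk containments themselves as known; your sketch of the outer containment via the contrapositive is extra, and the frame-compatibility issue you flag is exactly the right place where the $C^2$ hypothesis is used.

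One small imprecision: your assertion $\delta(w)\asymp\delta(z)$ for $w\in P(z,a\delta(z))$ cannot hold with constants independent of $a$ (moving distance $a\delta(z)$ in the real normal direction can make $\delta(w)$ as large as $(1+Ca)\delta(z)$ or push $w$ toward $bD$). The paper states this more carefully as $\delta(z)\le C_D(1+a(r))\delta(w)$ and $\delta(w)\le C_D(1+a(r))\delta(z)$, which feeds into an estimate $d_H(\pi(z),\pi(w))^2\lesssim (1+a(r))^{3/2}\delta(z)^{1/2}\delta(w)^{1/2}$ and hence $(1+a(r))^{3/2}\lesssim e^r$. Your displayed bound $k(z,w)\le C'\log(1+\sqrt{a})$ survives once you track this $a$-dependence, since all the extra factors are polynomial in $a$ and only change the exponent in the exponential growth of $a(r)$.
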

\begin{proof}
The first part of the above lemma is well-known (see, e.g., \cite{KY1996, MV2012}), while the second part is a consequence of Theorem \ref{BBTheorem} and Lemma \ref{BallBox}. Fix $z \in D.$ On one hand, one can check that Theorem \ref{BBTheorem} implies that the inequalities \begin{equation} d_H(\pi(z),\pi(w))^2< \frac{e^{-C} e^r \delta(z)^{1/2} \delta(w)^{1/2}}{4} \label{horizontal bound} \end{equation} and \begin{equation} \max\{ \delta(z), \delta(w) \}< \frac{e^{-C} e^r \delta(z)^{1/2} \delta(w)^{1/2}}{4} 
\label{boundarydistancerelation} \end{equation} guarantee $w \in E(z,r).$ So we need to show that there is a choice $a(r)$ so that $w \in P(z,a(r) \delta(z))$ implies \eqref{horizontal bound} and \eqref{boundarydistancerelation}.

On the other hand, by definition if $w \in P(z,a(r) \delta(z))$, then we have $|z_1-w_1|< a(r) \delta(z)$ and $|z'-w'| < C_n a(r)^{1/2}  \delta(z)^{1/2}$ (here, the splitting into complex and tangential components is done at $z$). Then, using the triangle inequality, we have: $$|\pi(z)_1-\pi(w)_1|\leq |z_1-\pi(z)_1|+ |z_1-w_1| + |w_1-\pi(w)_1|\leq \delta(z)+ a(r) \delta(z)+ \delta(w)$$ and similarly
$$  |\pi(z)'-\pi(w)'| \leq \delta(z)+ C_n                      a(r)^{1/2}  \delta(z)^{1/2}+ \delta(w). $$
 Moreover, a straightforward Taylor series argument shows that $\delta(z) \leq C_D(1+a(r))\delta(w)$ and similarly $\delta(w) \leq C_D(1+a(r))\delta(z)$. By Lemma \ref{BallBox}, these bounds imply that $d_{H}(\pi(z),\pi(w))^2 \leq C_{D}'(1+a(r))^{3/2} \delta(z)^{1/2} \delta(w)^{1/2}.$ 

It is now clear that we can choose $a(r)$ to grow exponentially so that equations \eqref{horizontal bound} and \eqref{boundarydistancerelation} both hold and hence $w \in E(z,r)$. In particular, it is enough to choose $a(r)$ so that $(1+a(r))^{3/2} \lesssim e^r$ with an appropriate implicit constant.
\end{proof}

As a byproduct, we have the following 
 corollary concerning the volume of Kobayashi balls.

\begin{cor}\label{VolumeKobayashiBalls}
For each $r>0$, there exists a constant $C(r)$ so that $|E(z,r)| \approx C(r) \delta(z)^{n+1}.$ The implicit constants only depend on the underlying domain $D$ and are independent of $z$ and $r.$ 
\end{cor}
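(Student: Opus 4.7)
The plan is to deduce this directly from Lemma \ref{PolyContain} together with an elementary volume computation on the anisotropic polydisks $P(z,s)$. First I would observe that the Lebesgue volume of $P(z,s)$ is straightforward: it is the product of the area of a disk of radius $s$ in the single complex normal direction and the areas of $n-1$ disks of radius $s^{1/2}$ in the complex tangential directions, so
$$
|P(z,s)| \approx s^{2}\cdot\bigl(s^{1/2}\bigr)^{2(n-1)} = s^{n+1},
$$
with implicit constants depending only on the dimension $n$. The fact that the splitting into normal and tangential components is performed at the base point $z$ does not affect the volume, since it corresponds to a unitary change of coordinates and Lebesgue measure in $\mathbb{C}^n$ is unitarily invariant. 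Consequently the estimate is uniform in $z$.

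Next I would apply Lemma \ref{PolyContain} with $s=a(r)\delta(z)$ for the inner polydisk and $s=A(r)\delta(z)$ for the outer polydisk to sandwich
$$
a(r)^{n+1}\,\delta(z)^{n+1} \;\lesssim\; |E(z,r)| \;\lesssim\; A(r)^{n+1}\,\delta(z)^{n+1},
$$
where the implicit constants depend only on $D$ and $n$. Choosing $C(r)$ to be any quantity comparable to both $a(r)^{n+1}$ and $A(r)^{n+1}$ (for instance their geometric mean) then yields $|E(z,r)|\approx C(r)\,\delta(z)^{n+1}$ with absolute implicit constants in the $\approx$, as stated.

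Since the result is essentially a packaging of Lemma \ref{PolyContain}, I do not anticipate any substantive difficulty; the only item requiring care is to confirm that the constants in the polydisk volume computation and in Lemma \ref{PolyContain} are genuinely uniform in $z\in D$ (and not just for $z$ in a neighborhood of $bD$), which can be verified separately by a compactness argument on $\{z\in D: \delta(z)\geq \delta_0\}$ for any fixed $\delta_0>0$.
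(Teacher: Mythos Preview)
Your proposal is correct and is exactly the argument the paper intends: the corollary is stated immediately after Lemma \ref{PolyContain} with no proof, as a direct byproduct of the polydisk sandwich, and your volume computation $|P(z,s)|\approx s^{n+1}$ together with $s=a(r)\delta(z)$ and $s=A(r)\delta(z)$ is the natural way to fill in the details. One small quibble: your geometric-mean choice of $C(r)$ does not by itself force the implicit constants in $\approx$ to be independent of $r$ unless $A(r)/a(r)$ is uniformly bounded; however, the paper never actually uses $r$-independence in a sharp way, so this does not affect anything downstream.
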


A further application of Lemma \ref{PolyContain} is to show that the function $F$ is essentially a locally constant function, where we recall $F$ is defined in \eqref{20220912eq01}. 

\begin{prop}\label{comparability of F}
		Let $r>0$. Then there exists a constant $C$ which depends on $r$, such that for all $z, w \in D$ and $\zeta_1, \zeta_2 \in E(w,r),$ there holds
		$$\frac{1}{C} F(z,\zeta_2)\leq F(z,\zeta_1) \leq C F(z,\zeta_2).  $$
	\end{prop}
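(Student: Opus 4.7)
The plan is to show the second inequality $F(z,\zeta_1) \leq C F(z,\zeta_2)$; the reverse follows by symmetry in the roles of $\zeta_1$ and $\zeta_2$. Throughout the argument we work with the equivalent formulation
\[
F(z,w) \approx |\rho(w)| + |\rho(z)| + |\langle \overline{\partial}\rho(w), w-z\rangle| + |w-z|^2
\]
mentioned in the excerpt, which is easier to manipulate than the version with $\operatorname{Im}$. Since $\zeta_1,\zeta_2 \in E(w,r)$, Lemma \ref{PolyContain} puts them both in the polydisk $P(w,A(r)\delta(w))$, so in coordinates aligned with the complex normal and tangential directions at $\pi(w)$ we have
$|(\zeta_1 - \zeta_2)_N| \leq 2A(r)\,\delta(w)$ and $|(\zeta_1-\zeta_2)_H| \leq 2A(r)\,\delta(w)^{1/2}$, and in particular $|\zeta_1-\zeta_2|^2 \lesssim_r \delta(w)$. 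The Taylor-expansion argument already used in the proof of Lemma \ref{PolyContain} further gives $\delta(\zeta_1) \approx_r \delta(\zeta_2) \approx_r \delta(w)$, so $|\rho(\zeta_1)| \approx_r |\rho(\zeta_2)| \lesssim F(z,\zeta_2)$.

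Next I dispose of the Euclidean-distance term. Using $|\zeta_1-z|^2 \leq 2|\zeta_1-\zeta_2|^2 + 2|\zeta_2-z|^2$ and the bound $|\zeta_1-\zeta_2|^2 \lesssim_r \delta(w) \approx |\rho(\zeta_2)|$ from the previous paragraph, I get $|\zeta_1-z|^2 \lesssim_r F(z,\zeta_2)$. The $|\rho(z)|$ term is common to both sides, so only the inner-product term remains. I decompose it as
\[
\langle \overline{\partial}\rho(\zeta_1), \zeta_1-z\rangle = \langle \overline{\partial}\rho(\zeta_1), \zeta_1-\zeta_2\rangle + \langle \overline{\partial}\rho(\zeta_1)-\overline{\partial}\rho(\zeta_2), \zeta_2-z\rangle + \langle \overline{\partial}\rho(\zeta_2), \zeta_2-z\rangle.
\]
The last term is directly bounded by $F(z,\zeta_2)$. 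For the middle term, the $C^1$-regularity of $\overline{\partial}\rho$ (coming from $\rho \in C^2$) gives $|\overline{\partial}\rho(\zeta_1)-\overline{\partial}\rho(\zeta_2)| \lesssim |\zeta_1-\zeta_2| \lesssim_r \delta(w)^{1/2}$, so via AM-GM
\[
\delta(w)^{1/2}\,|\zeta_2-z| \leq \tfrac{1}{2}\delta(w) + \tfrac{1}{2}|\zeta_2-z|^2 \lesssim F(z,\zeta_2).
\]

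The subtle term is $\langle \overline{\partial}\rho(\zeta_1), \zeta_1-\zeta_2\rangle$, because $|\zeta_1-\zeta_2|$ is only of order $\delta(w)^{1/2}$, not $\delta(w)$. Here I exploit the fact that $\overline{\partial}\rho(\pi(w))$ annihilates the horizontal subspace $H_{\pi(w)}$. Writing
\[
\langle \overline{\partial}\rho(\zeta_1), \zeta_1-\zeta_2\rangle = \langle \overline{\partial}\rho(\zeta_1)-\overline{\partial}\rho(\pi(w)), \zeta_1-\zeta_2\rangle + \langle \overline{\partial}\rho(\pi(w)), \zeta_1-\zeta_2\rangle,
\]
the first piece is controlled by $C|\zeta_1-\pi(w)|\cdot|\zeta_1-\zeta_2| \lesssim_r \delta(w)^{1/2}\cdot\delta(w)^{1/2} = \delta(w)$, and for the second piece the horizontal component of $\zeta_1-\zeta_2$ is killed by $\overline{\partial}\rho(\pi(w))$, leaving only a contribution from $(\zeta_1-\zeta_2)_N$, which is of size $\lesssim_r \delta(w)$. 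Hence $|\langle \overline{\partial}\rho(\zeta_1), \zeta_1-\zeta_2\rangle| \lesssim_r \delta(w) \approx |\rho(\zeta_2)| \lesssim F(z,\zeta_2)$.

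Combining these estimates proves $F(z,\zeta_1) \leq C_r\, F(z,\zeta_2)$, and the symmetric inequality is obtained by interchanging $\zeta_1$ and $\zeta_2$. The main technical obstacle I expect is precisely the last step: controlling $\langle \overline{\partial}\rho(\zeta_1), \zeta_1-\zeta_2\rangle$ requires the geometric fact that $\overline{\partial}\rho$ is, up to a $C^1$ perturbation, in the complex normal direction near $bD$, which is exactly what lets one trade a tangential length of size $\delta(w)^{1/2}$ for a normal length of size $\delta(w)$.
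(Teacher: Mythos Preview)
Your proof is correct and follows essentially the same decomposition as the paper's argument; the only cosmetic difference is that the paper first uses the triangle inequality to observe $\zeta_1 \in E(\zeta_2,2r)$ and then applies Lemma~\ref{PolyContain} with the polydisk centered at $\zeta_2$ rather than at $w$, which makes the ``subtle term'' $\langle \overline{\partial}\rho(\zeta_2),\zeta_2-\zeta_1\rangle$ immediate (the normal coordinate is already aligned with $\overline{\partial}\rho$ near $\zeta_2$) and avoids your extra splitting through $\overline{\partial}\rho(\pi(w))$. The paper also handles the cross term $|\zeta_2-\zeta_1|\,|\zeta_1-z|$ by a two-case comparison instead of AM--GM, but this is of course equivalent.
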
 
		
		\begin{proof}
			By symmetry, we need only prove the lower bound. It will also be convenient to use the fact that $F(z,w) \approx |\rho(w)|+ |\rho(z)|+|\langle \overline \partial \rho(w),w-z\rangle |+|w-z|^2.$ First, note that by a straightforward application of Theorem \ref{BBTheorem}, we can deduce that if $\zeta_1,\zeta_2 \in E(w,r)$, then $|\rho(\zeta_1)| \approx |\rho(\zeta_2)| $ where the implied constants only depend on $r.$ Observe that the triangle inequality implies that $\zeta_1 \in E(\zeta_2, 2r)$ and similarly $\zeta_2 \in E(\zeta_1,2r).$ We now estimate the other terms using Lemma \ref{PolyContain}:
			\begin{align*}
				|\langle \overline \partial \rho(\zeta_2),\zeta_2-z\rangle | & \leq |\langle \overline \partial \rho(\zeta_2),\zeta_1-z \rangle |+   |\langle \overline \partial \rho(\zeta_2),\zeta_2-\zeta_1 \rangle | \\
				& \leq |\langle \overline \partial \rho(\zeta_1),\zeta_1-z \rangle |+|\langle \overline \partial \rho(\zeta_2)-\partial \rho(\zeta_1),\zeta_1-z \rangle |+ C_r |\rho(\zeta_2)|\\
				& \lesssim F(z, \zeta_1) + |\zeta_2-\zeta_1||\zeta_1-z|.
			\end{align*}
			If $|\zeta_2-\zeta_1|\leq |\zeta_1-z|,$ then $|\zeta_2-\zeta_1||\zeta_1-z| \leq |\zeta_1-z|^2 \leq F(z,\zeta_1).$ Otherwise, $|\zeta_2-\zeta_1||\zeta_1-z| \leq |\zeta_2-\zeta_1|^2 \leq C_r |\rho(\zeta_1)| \lesssim F(z, \zeta_1),$ again using Lemma \ref{PolyContain}. The term $|\zeta_2-z|^2$ can be estimated similarly. 
		\end{proof}

We also need the following lemma, which will play an important role in the proof of Theorem \ref{CommutatorBoundedMain}. 

\begin{lemma}\label{metric estimate}
Let $D$ be a strongly pseudoconvex domain with $C^2$ boundary. Then for each $\varepsilon>0$, there exists a constant $C_\varepsilon=C_\varepsilon(D)$ such that
$$d_K(z,w) \leq C_{\varepsilon} \left(\frac{F(z,w)^{2(n+1)}}{|\rho(z)|^{n+1} |\rho(w)|^{n+1}}\right)^{\varepsilon}, \quad z, w \in D.$$
\end{lemma}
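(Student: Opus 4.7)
The plan is to use the Balogh--Bonk estimate of Theorem~\ref{BBTheorem} to replace $d_K(z,w)$ by a logarithmic expression involving $d_H$ and $\delta$, to bound that expression in terms of $F(z,w)/(|\rho(z)|^{1/2}|\rho(w)|^{1/2})$, and finally to convert the log bound to a power bound via the elementary fact that $\log t \leq C_\eta t^{\eta}$ for every $t\geq 1$ and $\eta>0$. Concretely, Theorem~\ref{BBTheorem} gives $d_K(z,w) \leq C + \log(A^2/(\delta(z)^{1/2}\delta(w)^{1/2}))$, where $A:=d_H(\pi(z),\pi(w))+\max\{\delta(z)^{1/2},\delta(w)^{1/2}\}$. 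Using $(a+b)^2\leq 2(a^2+b^2)$ gives $A^2\lesssim d_H(\pi(z),\pi(w))^2+|\rho(z)|+|\rho(w)|$, and since $|\rho(z)|,|\rho(w)|\leq F(z,w)$ by the definition of $F$, the entire proof reduces to establishing
\begin{equation}\label{planKey}
d_H(\pi(z),\pi(w))^2 \leq C\, F(z,w).
\end{equation}

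To prove \eqref{planKey}, I would invoke the Ball-Box Lemma (Lemma~\ref{BallBox}) and decompose $\pi(z)-\pi(w)=Z_H+Z_N$, where $Z_H\in H_{\pi(w)}$ is horizontal and $Z_N$ lies in the complex-normal direction at $\pi(w)$; by Ball-Box it suffices to prove $|Z_H|^2\lesssim F(z,w)$ and $|Z_N|\lesssim F(z,w)$, which together show $\pi(z)$ lies in a box of size $\lesssim F(z,w)^{1/2}$ about $\pi(w)$. The horizontal estimate is easy: $|Z_H|\leq|\pi(z)-\pi(w)|\leq|z-w|+\delta(z)+\delta(w)$, which upon squaring, together with $|\rho|^2\lesssim|\rho|$ (valid since $D$ is bounded) yields $|Z_H|^2\lesssim F(z,w)$. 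The normal estimate is the delicate part. Write $|Z_N|$ as a constant times $|P^{(1)}_{\pi(w)}(\pi(z))|$, where $P^{(1)}_\zeta(z)=\sum_j(\partial\rho/\partial\zeta_j)(\zeta)(z_j-\zeta_j)$ is the linear part of the Levi polynomial at $\zeta$. Then $|\operatorname{Re} P^{(1)}_{\pi(w)}(\pi(z))|\lesssim |\pi(z)-\pi(w)|^2 \lesssim F(z,w)$ by Taylor-expanding $\rho$ at $\pi(w)$ and using $\rho(\pi(z))=\rho(\pi(w))=0$. For the imaginary part, I would use the identity $\pi(z)-\pi(w)=(z-w)-(z-\pi(z))+(w-\pi(w))$, observe that the real-normal increments $w-\pi(w)$ and $z-\pi(z)$ pair with $\partial\rho(\pi(w))$ to give essentially real contributions (the imaginary error from the twisting of the normal direction between $\pi(z)$ and $\pi(w)$ being $O(\delta(z)|\pi(z)-\pi(w)|)\lesssim F$), and then exploit the $C^2$-regularity of $\rho$ to replace $\partial\rho(\pi(w))$ by $\partial\rho(w)$ at a cost of $O(|\rho(w)||z-w|)$, which is also bounded by $F$ via AM--GM. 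What survives is exactly $|\operatorname{Im} P^{(1)}_w(z)|$, which is a summand in $F(z,w)$.

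Once \eqref{planKey} is in hand, the reduction in the first paragraph yields $d_K(z,w)\leq C+\log(F(z,w)/(|\rho(z)|^{1/2}|\rho(w)|^{1/2}))$, and since $F(z,w)\geq|\rho(z)|^{1/2}|\rho(w)|^{1/2}$ (because $F$ dominates $|\rho(z)|+|\rho(w)|\geq 2|\rho(z)|^{1/2}|\rho(w)|^{1/2}$), the argument of the logarithm is $\geq 1$. Applying $\log t\leq C_\eta t^\eta$ with $\eta=2\varepsilon(n+1)$ then produces the claimed estimate, with the additive constant absorbed because the right-hand side $\bigl(F^{2(n+1)}/(|\rho(z)|^{n+1}|\rho(w)|^{n+1})\bigr)^\varepsilon$ is always $\geq 1$. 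The main obstacle is the imaginary-part estimate for $Z_N$: a naive triangle inequality only yields $|Z_N|\lesssim|\pi(z)-\pi(w)|\lesssim F^{1/2}$, giving $d_H^2\lesssim F^{1/2}$, which is insufficient; the argument genuinely depends on the $C^2$-Taylor structure of $\rho$ and on the cancellation between the normal-direction corrections at $z$ and $w$.
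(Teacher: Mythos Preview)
Your proof is correct and follows essentially the same route as the paper: Balogh--Bonk to reduce to a logarithm, Ball--Box to reduce to $d_H(\pi(z),\pi(w))^2\lesssim F(z,w)$, and then the elementary $\log t\le C_\eta t^\eta$. The only difference is in the normal-component estimate: you split $|Z_N|$ into real and imaginary parts and handle them separately, whereas the paper simply invokes the equivalent form $F(z,w)\approx |\rho(z)|+|\rho(w)|+|\langle\bar\partial\rho(w),w-z\rangle|+|w-z|^2$ (already recorded just after \eqref{20220912eq01}) so that the triangle-inequality identity $\pi(z)-\pi(w)=(z-w)-(z-\pi(z))+(w-\pi(w))$ bounds the full complex-normal component $|z_1-w_1|$ by $F$ in one stroke---your ``main obstacle'' disappears once you use that equivalence rather than the original definition of $F$ with only $|\mathrm{Im}\langle\bar\partial\rho(w),w-z\rangle|$.
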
 

\begin{rem}
We refer the reader to \cite[Theorem 4]{Li1992} for an earlier version of the above lemma in the $C^\infty$-category. 
\end{rem}

\begin{proof}
We again use the estimate in Theorem \ref{BBTheorem}: there exists a constant $C>0$ so that the following estimate holds for all $z,w \in D:$
$$d_K(z,w) \leq C + k(z,w).$$ Note that this implies there exists a constant $C'>0$ so
$$d_K(z,w) \leq C'\left(1 + \log \left[ \frac{d_H(\pi(z),\pi(w))^2+\max\{ \delta(z), \delta(w)\}}{\delta(z)^{1/2} \delta(w)^{1/2}}  \right]\right). $$
Since $1+\log(x) \leq C_\alpha x^\alpha$ for any exponent $\alpha>0$ and $x \geq 1$, it suffices to show that $d_H(\pi(z),\pi(w))^2+\max\{ \delta(z), \delta(w)\} \lesssim F(z,w),$ where the implicit constant does not depend on $z$ or $w.$ 

It is obvious by definition $\max\{ \delta(z), \delta(w)\} \lesssim F(z,w).$ It suffices to show that 
\begin{equation} 
d_H(\pi(z),\pi(w)) \lesssim |(\pi(z))_1-(\pi(w))_1|^{1/2} + |\pi(z)'-\pi(w)'|, \label{hmetricbound} \end{equation}
where the splitting into normal and tangential components is done at $\pi(w).$ Assuming \eqref{hmetricbound}, an application of the triangle inequality yields: 
\begin{align*}
d_H(\pi(z),\pi(w)) & \lesssim |(\pi(z))_1-(\pi(w))_1|^{1/2} + |\pi(z)'-\pi(w)'| \\
& \lesssim |z_1-w_1|^{1/2} +|\rho(z)|^{1/2}+|\rho(w)|^{1/2}+|z-w|\\
& \lesssim F(z,w)^{1/2}.
\end{align*}
Estimate \eqref{hmetricbound} is exactly implied by the Ball-Box Lemma, \cite{BB2000}*{Proposition 3.1}. Indeed, suppose $d_H(\pi(z),\pi(w))=\varepsilon_0.$ Then $\pi(z) \notin B(\pi(w),\varepsilon_0),$ which by the lemma implies $\pi(z) \not \in \text{Box}(w, \frac{\varepsilon_0}{C}).$ This means $|(\pi(z))_1-(\pi(w))_1|^{1/2} + |\pi(z)'-\pi(w)'| \geq \frac{\varepsilon_0}{C},$ which is what we sought to show. 
\end{proof}

We recall the definition of Carleson measure below, which is a key tool in the arguments for bounding commutators. 

\begin{defin}
Let $1 \leq p<\infty.$ A Borel measure $\mu$ on $D$ is called a \emph{Carleson measure} for $A^p(D)$ if there exists a constant $C>0$ so that for all $f \in A^p(D)$, 
$$
\int_{D} |f|^p \,d\mu \leq C \int_{D} |f|^p \, dV.
$$
\end{defin} 
Carleson measures on strongly pseudoconvex domains can notably be characterized in terms of the Kobayashi metric (see, e.g., \cite{AS2011}). 

\subsection{\textnormal{BMO} and \textnormal{VMO} spaces} Finally, we introduce the space of functions of \emph{bounded mean oscillation} (\textnormal{BMO}), as well as its ``little-o" version, that is, the space of \emph{vanishing mean oscillation} (\textnormal{VMO}) in the setting of $C^2$-category.

We first define the \textnormal{BMO} spaces, which are pivotal in studying the boundedness of $[b, P]$. 

\begin{defin} \label{BMO-01}
Let $1 \leq p<\infty$ and $r>0.$ We say a $p$-locally integrable function $b \in \textnormal{BMO}_r^{\:p}$ if 
$$\|b\|_{\textnormal{BMO}_r^{\: p}}:= \sup_{z \in D}\left(\frac{1}{|E(z,r)|} \int_{E(z,r)}|b-\langle b \rangle_{E(z,r)}|^p \, dV \right)^{1/p}<\infty.$$
\end{defin}

\begin{rem}\label{ContainmentRem}
\begin{enumerate}
\item [(1).] Using Corollary \ref{VolumeKobayashiBalls}, it is straightforward to show that for $0<r'<r,$ $\textnormal{BMO}_{r}^{\:p} \subseteq \textnormal{BMO}_{r'}^{\:p}$ with bounded inclusion (the constant may depend on $r,r'$). Nevertheless, we remark that these $\textnormal{BMO}$ spaces are indeed the same (see Theorem \ref{BMOequivalence}). 
    
\item [(2).] For fixed $r>0$ and $q \geq p,$ we also have, by H\"{o}lder's inequality, that $\textnormal{BMO}_r^{\:q} \subset   \textnormal{BMO}_r^{\:p}$ with bounded inclusion. Considering test functions with compact support, it is easy to see that the inclusion is strict. 

\item [(3).] It is clear that $\textnormal{BMO}_r^{\:p}$ is different from any Euclidean $\textnormal{BMO}$ spaces, which are independent of the $p$ norm by the John-Nirenberg inequality.  Moreover, for any $b \in \textnormal{BMO}_r^{\:p}$, one can see that $b \in L^p(D)$, while this is not true in general for Euclidean \textnormal{BMO} spaces. This property follows from Theorem \ref{BMOequivalence}.
\end{enumerate}
\end{rem}

We also define the \textnormal{VMO} spaces, which are the collection of functions in \textnormal{BMO} whose mean oscillations vanish at the boundary.  Here and henceforth, for a positive function $h(z)$, we use the notation $\lim_{z \rightarrow bD} h(z)=0$ to denote the equality
	$$
	\lim_{\delta \rightarrow 0^{+}} \sup_{z \in D: -\rho(z)< \delta} h(z)=0.
	$$

\begin{defin} \label{VMO-01}
		Let $1 \le p<\infty$ and $r>0$. We say an $L^p(D)$  function $b \in \textnormal{VMO}_r^{\:p}$ if 
		$$ \lim_{z \rightarrow b D} \left(\frac{1}{|E(z,r)|} \int_{E(z,r)}|b-\langle b \rangle_{E(z,r)}|^p \, dV \right)^{1/p}=0.$$
		
\end{defin}

The \textnormal{VMO} spaces play an important role to study the compactness of $[b, P]$. Moreover, we will show that the above definition for \textnormal{VMO} is independent of the radius of the Kobayashi balls (see Theorem \ref{VMO equivalence}). 

The \textnormal{BMO} and \textnormal{VMO} spaces were considered in the context of the Bergman metric on the unit ball $\mathbb B$ in $\mathbb C^n$ (see \cite{Z1992}), where the dependence on $p$ was considered and it was also proven that the space $\textnormal{BMO}_r^p$ ($\textnormal{VMO}_r^p$, respectively) is independent of the fixed radius $r.$ These notions of \textnormal{BMO} and \textnormal{VMO}
were extended in \cite{Li1992} for $p=2$ to strongly pseudoconvex, smoothly bounded domains and appear for other values of $p$ in \cite{LiLu1994}. Moreover, as for the case of the unit ball, it is known that for strongly pseudoconvex, smoothly bounded domains the space $\textnormal{BMO}_r^{\:p}$ ($\textnormal{VMO}_r^{\: p}$, respectively) is independent of $r$. It also turns out that for such a domain, $\textnormal{BMO}_r^{\:p} \cap \textnormal{Hol}(D)=\mathfrak{B}$ and $\textnormal{VMO}_r^{\: p} \cap \textnormal{Hol}(D)=\mathfrak{B}_0$, where $\mathfrak{B}$ denotes the Bloch space and $\mathfrak{B}_0$ denotes the little Bloch space, respectively (in particular, the intersection is independent of the exponent $p$, see, \cite{LiLu1994}). We point out that the case $p=2$ on the unit disk with the additional assumption that $b$ is holomorphic (or equivalently $b$ belongs to BMOA) first  appears in \cite{Ax1986} as an equivalent characterization of the Bloch space.

\section{Boundedness of $[b, P]$: Proof of Theorem \ref{CommutatorBoundedMain}}\label{Boundedness}
	
Recall the Kerzman-Stein-Ligocka operator $T_\varepsilon$ defined in Section 2.2. Our strategy to prove the boundedness of $[b,P]$ is to first show the commutator $[b,T_\varepsilon]$ is bounded, and then link these two commutators via a modified version of the original Kerzman-Stein equation for $P$ given in \cite{LS2012}. A key ingredient in establishing the boundedness of $[b,T_\varepsilon]$ is to make use of the \emph{dyadic Berezin transform} $\widehat{b}_{\ell}$. More precisely, it enables us to decompose $[b,T]$ into 
$$
[\widehat{b}_{\ell},T]+[b-\widehat{b}_{\ell},T],
$$
where $\widehat{b}_{\ell}$ has pointwise estimates and $|b-\widehat{b}_{\ell}|^pdV$ is a Carleson measure (since, heuristically, $\{E(c_j^k, \beta)\}$ have finite overlaps). Here and henceforth, $\beta$ is the constant defined in Lemma \ref{comparablekobayashiball}. 
 
\subsection{Dyadic Berezin Transforms} We begin with introducing the \emph{dyadic Berezin transforms}, which play a fundamental role in our analysis. 

\begin{defin}
Let $\mathcal{T}_{\ell}$ be a fixed dyadic lattice assocated to the domain $D$. The dyadic Berezin transform of a locally integrable function $b$ is given by 
$$\widehat{b}_{\ell}(z):= \sum_{K_j^k \in \mathcal{T}_{\ell}} \langle b \rangle_{E(c_j^k,\beta)} \one_{K_j^k}(z).$$
\end{defin}

First of all, we show $|b-\widehat{b}_\ell|^p dV$ is Carleson. 

	\begin{lemma}\label{carleson lemma}
	Let $\ell \in \{1,2,\cdots, N\}$, $p_0 \geq 1,$ $0<q\leq {p_0},$ and $b \in \textnormal{BMO}_{\beta}^{\, p_0}.$ Then $|b-\widehat{b}_{\ell}|^q dV$ is a Carleson measure on $A^p(D)$ for any $p \geq 1.$ 
	\end{lemma}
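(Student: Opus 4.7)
The plan is to invoke the geometric characterization of Carleson measures on strongly pseudoconvex domains (see \cite{AS2011} and the comment following our definition of Carleson measure): a positive Borel measure $\mu$ on $D$ is Carleson for $A^p(D)$ for every $p\geq 1$ provided that $\sup_{z\in D} \mu(E(z,r_0))/|E(z,r_0)| < \infty$ for some fixed $r_0>0$, and the condition is independent of $p$. Accordingly, taking $\mu = |b - \widehat b_\ell|^q\, dV$ and $r_0 = \beta$, it suffices to establish
$$
\sup_{z \in D} \frac{1}{|E(z,\beta)|}\int_{E(z,\beta)}|b - \widehat b_\ell|^q \, dV \;<\; \infty.
$$

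Fix $z \in D$. First I would handle the geometry: if $K_j^k \in \mathcal T_\ell$ meets $E(z,\beta)$, then by Lemma \ref{comparablekobayashiball} every point of $K_j^k$ lies within Kobayashi distance $\beta$ of $c_j^k$, and hence within $2\beta$ of any chosen point of $K_j^k \cap E(z,\beta)$. The triangle inequality then yields $K_j^k \subset E(z,3\beta)$. Since the kubes are pairwise disjoint and tile $D$ (Lemma \ref{DyaSystem}), Corollary \ref{VolumeKobayashiBalls} delivers
$$
\sum_{K_j^k \cap E(z,\beta)\neq \emptyset} |K_j^k| \;\leq\; |E(z,3\beta)| \;\lesssim\; |E(z,\beta)|.
$$

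Next I would bound the oscillation on each kube. Since $\widehat b_\ell \equiv \langle b\rangle_{E(c_j^k,\beta)}$ on $K_j^k \subset E(c_j^k,\beta)$, and since $p_0/q \geq 1$, Jensen's inequality applied to the convex function $t \mapsto t^{p_0/q}$ gives
$$
\int_{K_j^k} |b - \widehat b_\ell|^q\, dV \;\leq\; \int_{E(c_j^k,\beta)} |b - \langle b\rangle_{E(c_j^k,\beta)}|^q\, dV \;\leq\; |E(c_j^k,\beta)|\,\|b\|_{\textnormal{BMO}_\beta^{\,p_0}}^{\,q}.
$$
Because $|E(c_j^k,\beta)| \approx |K_j^k|$ by Lemma \ref{comparablekobayashiball}, summing over the kubes that meet $E(z,\beta)$ and combining with the packing bound from the previous paragraph produces the desired inequality, with implicit constant depending only on $D$, $\beta$, $q$, $p_0$ and $\|b\|_{\textnormal{BMO}_\beta^{\,p_0}}$. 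The argument is largely bookkeeping; the only conceptual ingredient is the reduction from the abstract Carleson testing condition against $A^p$ functions to the single-radius geometric condition via \cite{AS2011}, which frees us from having to verify the estimate separately for each $p$.
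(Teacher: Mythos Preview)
Your argument is correct but follows a genuinely different route from the paper's. The paper proves the Carleson inequality directly: for $f\in A^p(D)$ it decomposes $\int_D |f|^p\,|b-\widehat b_\ell|^q\,dV$ over the kubes, invokes the sub-mean value inequality (Corollary~\ref{submean}) to replace $|f(w)|^p$ on each $K_j^k$ by its average over the enlarged ball $E(c_j^k,\widetilde\beta)$, applies the $\textnormal{BMO}_\beta^{p_0}$ bound on the oscillation factor, and finishes by summing via the finite overlap of the balls $E(c_j^k,\widetilde\beta)$. You instead outsource the analytic step by appealing to the geometric characterization of Carleson measures, reducing everything to the single testing condition $\sup_{z}\mu(E(z,\beta))/|E(z,\beta)|<\infty$, which you then verify with a clean packing argument and Jensen. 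Your version has the advantage of never touching $f$ and of making the $p$-independence of the conclusion manifest; the paper's version is entirely self-contained and yields the explicit Carleson constant $\|b\|_{\textnormal{BMO}_\beta^{p_0}}^q$ in one pass.

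One caveat worth flagging: the black box you invoke from \cite{AS2011} is stated there under stronger boundary regularity than $C^2$. The sufficiency direction you need, however, rests only on the sub-mean value property and a finite-overlap/covering property for Kobayashi balls, both of which are available in the $C^2$ setting of this paper (Corollary~\ref{submean}, Lemma~\ref{comparablekobayashiball}, Corollary~\ref{VolumeKobayashiBalls}). So the reduction is legitimate, but in a self-contained write-up you would want either to note this or to sketch the one-line proof of the implication rather than cite \cite{AS2011} verbatim.
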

	\begin{proof}
	Take $f \in A^p(D)$. Then using the dyadic structure inside $D$, we have 
		\begin{align*}
			& \int_{D} |f(w)|^p |b(w)-\widehat{b}_{\ell}(w)|^q \, dV(w) \\
			& = \sum_{K_j^k \in \mathcal{T}_{\ell}} \int_{K_j^k} |f(w)|^p |b(w)-\widehat{b}_{\ell}(w)|^q \, dV(w)\\
			&= \sum_{K_j^k \in \mathcal{T}_{\ell}} \int_{K_j^k} |f(w)|^p |b(w)-\langle b \rangle_{E(c_j^k, \beta)} |^q \, dV(w).
		\end{align*}
Using Corollary \ref{submean}, we further bound the above term by
            \begin{align*}
			& \lesssim \sum_{K_j^k \in \mathcal{T}_{\ell}} \int_{K_j^k} \left(\frac{1}{|E(c_j^k, \widetilde{\beta})|} \int_{E(c_j^k,\widetilde{\beta})} |f(\zeta)|^p \,dV(\zeta)\right) |b(w)-\langle b \rangle_{E(c_j^k, \beta)} |^q \, dV(w)\\
			& \leq C_\beta \sum_{K_j^k \in \mathcal{T}_{\ell}} \left( \int_{E(c_j^k,\widetilde{\beta})} |f(\zeta)|^p \,dV(\zeta)\right) \left(\frac{1}{|E(c_j^k, \beta)|}\int_{E(c_j^k, \beta)} |b(w)-\langle b \rangle_{E(c_j^k, \beta)} |^q \, dV(w)\right)\\
			& \leq C_\beta \|b\|_{\textnormal{BMO}^{p_0}_{\beta}}^{q} \sum_{K_j^k \in \mathcal{T}_{\ell}} \left( \int_{E(c_j^k,\widetilde{\beta})} |f(\zeta)|^p \,dV(\zeta)\right) \lesssim \|b\|_{\textnormal{BMO}^{\:p_0}_{\beta}}^{q} \|f\|_{A^p(D)}^p.
		\end{align*}
Notice in the penultimate line, we used the fact that $0<q\leq p_0$, and in the last line, we used the finite overlap of the $E(c_j^k, \widetilde{\beta}).$
	\end{proof}
	
The following lemma is an analogue of \cite{BL1993}*{Lemma 2.12} and is used in a similar way in the proof of the sufficiency of Theorem \ref{CommutatorBoundedMain}. 

	\begin{lemma}\label{SchurWithBMO}
		Let $1<p<\infty$, $b \in \textnormal{BMO}_{\mathcal{\beta}}^{\:p},$ and $\ell \in \{1,2,\cdots, N\}.$ Then there exists $\varepsilon>0$ sufficiently small so that 
		$$\int_{D} |\rho(w)|^{-\varepsilon p}  |F(z,w)|^{-(n+1)} |b(w)-\widehat{b}_{\ell}(w)|^{p}\, dV(w)  \lesssim |\rho(z)|^{-\varepsilon p} \|b\|_{\textnormal{BMO}_{\mathcal{\beta}}^{\:p}}^p.$$
\end{lemma}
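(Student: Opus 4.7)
The plan is to dyadically decompose the integral over the kubes $K_j^k \in \mathcal T_\ell$, on each of which $\widehat{b}_{\ell}$ is the constant $\langle b \rangle_{E(c_j^k, \beta)}$, and then to recognize the resulting sum as a dyadic Riemann sum for a standard Rudin--Forelli integral.

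Step one: on each kube, Lemma \ref{comparablekobayashiball} gives $K_j^k \subset E(c_j^k, \beta)$ with $|K_j^k| \approx |E(c_j^k, \beta)|$, so Proposition \ref{comparability of F} yields $F(z, w) \approx F(z, c_j^k)$ uniformly for $w \in K_j^k$, while Theorem \ref{BBTheorem} applied to the defining function gives $|\rho(w)| \approx |\rho(c_j^k)|$ on the same set. Replacing $w$ by $c_j^k$ in the two weight factors, enlarging the domain of integration to $E(c_j^k,\beta)$, and invoking the $\textnormal{BMO}_\beta^{\: p}$ property of $b$ then yields
\begin{align*}
\int_{K_j^k} \frac{|\rho(w)|^{-\varepsilon p}\, |b(w) - \widehat{b}_{\ell}(w)|^p}{F(z,w)^{n+1}}\, dV(w) & \lesssim \frac{|\rho(c_j^k)|^{-\varepsilon p}}{F(z,c_j^k)^{n+1}} \int_{E(c_j^k,\beta)} \bigl|b - \langle b\rangle_{E(c_j^k,\beta)}\bigr|^p\, dV \\
 & \lesssim \|b\|_{\textnormal{BMO}_\beta^{\: p}}^{p}\, \frac{|\rho(c_j^k)|^{-\varepsilon p}}{F(z,c_j^k)^{n+1}}\, |K_j^k|.
\end{align*}

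Step two: summing over all kubes and using that they partition $D$ (Lemma \ref{DyaSystem}), I would run the local-constancy approximation in reverse to convert the sum back into an integral,
\[
\sum_{K_j^k \in \mathcal T_\ell} \frac{|\rho(c_j^k)|^{-\varepsilon p}}{F(z,c_j^k)^{n+1}}\, |K_j^k| \approx \int_D \frac{|\rho(w)|^{-\varepsilon p}}{F(z,w)^{n+1}}\, dV(w),
\]
and then apply Proposition \ref{RudinForelli} with parameters (in the notation of that proposition) $a = -\varepsilon p$ and second parameter equal to $\varepsilon p$; this gives precisely $\lesssim |\rho(z)|^{-\varepsilon p}$, which combined with the previous step yields the claim.

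There is no substantive obstacle: the only thing to watch is the hypothesis $a > -1$ in Proposition \ref{RudinForelli}, which forces the smallness constraint $0 < \varepsilon < 1/p$ so that the negative weight $|\rho|^{-\varepsilon p}$ is integrable against the singular kernel $F(z,\cdot)^{-(n+1)}$. All the other tools (locally constant behavior of $F$ and $|\rho|$ on kubes, BMO control of oscillations, and the Rudin--Forelli estimate) are already in place from the preliminary sections, so the argument is essentially an assembly of those pieces in the correct order.
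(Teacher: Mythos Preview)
Your proposal is correct and follows essentially the same approach as the paper: decompose over kubes, use local constancy of $F$ and $|\rho|$ on Kobayashi balls (Proposition~\ref{comparability of F}) together with the $\textnormal{BMO}_\beta^{\,p}$ bound on each kube, then reassemble into a single Rudin--Forelli integral and invoke Proposition~\ref{RudinForelli} with $\varepsilon p<1$. The only cosmetic difference is that the paper replaces the weight by its \emph{average} over the slightly larger ball $E(c_j^k,\widetilde\beta)$ and then uses finite overlap of those balls to rebuild the integral, whereas you evaluate at the center $c_j^k$ and use that the $K_j^k$ partition $D$ to reverse the discretization; the two maneuvers are equivalent.
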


\begin{proof}
			The proof is similar to that of Lemma \ref{carleson lemma}. Recall that for $r>0$ and $z \in D$, we have the equivalence $|\rho(z)| \approx |\rho(w)|$ for all $w \in E(z,r),$ where the implicit constant depends only on $r$.  We again start breaking the integral up into dyadic kubes and estimate as follows: 
			\begin{align*}
				& \int_{D} |\rho(w)|^{-\varepsilon p}  |F(z,w)|^{-(n+1)} |b(w)-\widehat{b}_{\ell}(w)|^{p}\, dV(w)  \\
				& = \sum_{K_j^k \in \mathcal{T}_{\ell}} \int_{K_j^k}  |\rho(w)|^{-\varepsilon p}  |F(z,w)|^{-(n+1)} |b(w)-\widehat{b}_{\ell}(w)|^{p}\, dV(w) \\
				& = \sum_{K_j^k \in \mathcal{T}_{\ell}} \int_{K_j^k} |\rho(w)|^{-\varepsilon p}  |F(z,w)|^{-(n+1)} |b(w)-\langle b \rangle_{E(c_j^k, \beta)} |^p \, dV(w).
			\end{align*}
	        Using Proposition \ref{comparability of F} with $r=\widetilde{\beta}$, one can bound the above term by 		
			\begin{align*}
				& \lesssim \sum_{K_j^k \in \mathcal{T}_{\ell}} \int_{K_j^k} \left(\frac{1}{|E(c_j^k, \widetilde{\beta})|} \int_{E(c_j^k,\widetilde{\beta})} |\rho(\zeta)|^{-\varepsilon p}  |F(z,\zeta)|^{-(n+1)} \,dV(\zeta)\right) |b(w)-\langle b \rangle_{E(c_j^k, \beta)} |^p \, dV(w)\\
				& \leq C_\beta \sum_{K_j^k \in \mathcal{T}_{\ell}} \left( \int_{E(c_j^k,\widetilde{\beta})} |\rho(\zeta)|^{-\varepsilon p}  |F(z,\zeta)|^{-(n+1)} \,dV(\zeta)\right) \left(\frac{1}{|E(c_j^k, \beta)|}\int_{E(c_j^k, \beta)} |b(w)-\langle b \rangle_{E(c_j^k, \beta)} |^p \, dV(w)\right)\\
				& \leq C_\beta \|b\|_{\textnormal{BMO}^{\:p}_{\beta}}^{p} \sum_{K_j^k \in \mathcal{T}_{\ell}} \left( \int_{E(c_j^k,\widetilde{\beta})} |\rho(\zeta)|^{-\varepsilon p}  |F(z,\zeta)|^{-(n+1)} \,dV(\zeta)\right)\\
				& \lesssim \|b\|_{\textnormal{BMO}^{\:p}_{\beta}}^{p} \left( \int_{D} |\rho(\zeta)|^{-\varepsilon p}  |F(z,\zeta)|^{-(n+1)} \,dV(\zeta)\right)  \lesssim \|b\|_{\textnormal{BMO}^{\:p}_{\beta}}^{p} |\rho(z)|^{-\varepsilon p},
			\end{align*}
where in the last estimate, we used Proposition \ref{RudinForelli}. 
		\end{proof}
		
Next, we prove the pointwise bounds for $\widehat{b}_\ell$. 
	
	\begin{lemma}\label{differenceofaverages}
	There exists a constant $C_\beta$ depending only on $\beta$ so that if $K_{j}^{k_0+1}\subseteq K_{j_0}^{k_0}$, then 
	$$\left| \langle b \rangle_{E(c_{j_0}^{k_0},\beta)}- \langle b \rangle_{E(c_{j}^{k_0+1},\beta)}\right| \leq C_\beta \|b\|_{\textnormal{BMO}^{\:p}_{3 \beta}}.$$
	\end{lemma}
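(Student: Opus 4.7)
\medskip

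\noindent\textbf{Proof proposal.} The plan is a routine ``comparable-averages'' argument: since the two Kobayashi balls appearing in the statement are comparable in size and both sit inside a slightly enlarged Kobayashi ball, each of their $b$-averages is close to the average of $b$ over the enlargement, and the latter two quantities cancel.

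First, I will locate both centers in the same kube. Since $K_{j}^{k_{0}+1}\subseteq K_{j_{0}}^{k_{0}}$, the point $c_{j}^{k_{0}+1}$ lies in $K_{j_{0}}^{k_{0}}$, and by Lemma \ref{comparablekobayashiball} we have $K_{j_{0}}^{k_{0}}\subset E(c_{j_{0}}^{k_{0}},\beta)$. Consequently $d_{K}(c_{j_{0}}^{k_{0}},c_{j}^{k_{0}+1})\leq \beta$, and the triangle inequality gives
\[
E(c_{j_{0}}^{k_{0}},\beta)\;\cup\;E(c_{j}^{k_{0}+1},\beta)\;\subset\;E(c_{j_{0}}^{k_{0}},3\beta).
\]
Set $c:=\langle b\rangle_{E(c_{j_{0}}^{k_{0}},3\beta)}$. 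Subtracting $c$ from both averages and using the triangle inequality, it suffices to bound
\[
\frac{1}{|E(c_{j_{0}}^{k_{0}},\beta)|}\int_{E(c_{j_{0}}^{k_{0}},\beta)}|b-c|\,dV
\quad\text{and}\quad
\frac{1}{|E(c_{j}^{k_{0}+1},\beta)|}\int_{E(c_{j}^{k_{0}+1},\beta)}|b-c|\,dV.
\]

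Second, I will check that the three volumes $|E(c_{j_{0}}^{k_{0}},\beta)|$, $|E(c_{j}^{k_{0}+1},\beta)|$, and $|E(c_{j_{0}}^{k_{0}},3\beta)|$ are all comparable with constants depending only on $\beta$ (and on the fixed parameters $s,\delta$ of the dyadic system). For the first two, Lemma \ref{comparablekobayashiball} combined with \eqref{3.5} yields $|E(c_{j}^{k_{0}+1},\beta)|\approx |K_{j}^{k_{0}+1}|\approx s^{-(2n+2)}|K_{j_{0}}^{k_{0}}|\approx |E(c_{j_{0}}^{k_{0}},\beta)|$; for the third, Corollary \ref{VolumeKobayashiBalls} implies $|E(c_{j_{0}}^{k_{0}},3\beta)|\approx C(3\beta)C(\beta)^{-1}|E(c_{j_{0}}^{k_{0}},\beta)|$. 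Hence each of the two averages above is bounded by a multiple of
\[
\frac{1}{|E(c_{j_{0}}^{k_{0}},3\beta)|}\int_{E(c_{j_{0}}^{k_{0}},3\beta)}|b-c|\,dV.
\]

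Finally, H\"older's inequality applied to this last expression gives
\[
\frac{1}{|E(c_{j_{0}}^{k_{0}},3\beta)|}\int_{E(c_{j_{0}}^{k_{0}},3\beta)}|b-c|\,dV
\leq \left(\frac{1}{|E(c_{j_{0}}^{k_{0}},3\beta)|}\int_{E(c_{j_{0}}^{k_{0}},3\beta)}|b-c|^{p}\,dV\right)^{1/p}\leq \|b\|_{\textnormal{BMO}^{\,p}_{3\beta}},
\]
by the very definition of $c$ and of the $\textnormal{BMO}^{\,p}_{3\beta}$ norm. Summing the two contributions produces the desired constant $C_{\beta}$, which depends only on $\beta$ (through the volume ratios) and on the fixed dyadic parameters. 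There is no substantive obstacle here; the only point requiring care is ensuring that the inflation factor needed to absorb both balls into a single Kobayashi ball centered at $c_{j_{0}}^{k_{0}}$ is exactly $3\beta$, which is precisely the BMO radius appearing on the right-hand side.
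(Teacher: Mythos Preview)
Your proof is correct and follows essentially the same route as the paper's: both arguments rest on the containment $E(c_{j_0}^{k_0},\beta)\cup E(c_j^{k_0+1},\beta)\subset E(c_{j_0}^{k_0},3\beta)$ together with comparability of the three volumes, then finish with H\"older. The only cosmetic difference is that the paper expresses the difference of averages as a double integral $\frac{1}{|E_1||E_2|}\iint (b(z)-b(w))\,dV\,dV$ and enlarges both domains simultaneously, whereas you insert the constant $c=\langle b\rangle_{E(c_{j_0}^{k_0},3\beta)}$ and handle the two terms separately; these are interchangeable standard devices.
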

	\begin{proof}
		Since $E(c_{j_0}^{k_0},\beta) \cap E(c_{j}^{k_0+1},\beta) \neq \emptyset,$ it is clear that both balls are contained in $E(c_{j_0}^{k_0},3\beta).$   We have 
		\begin{align*}
			\left| \langle b \rangle_{E(c_{j_0}^{k_0},\beta)}- \langle b \rangle_{E(c_{j}^{k_0+1},\beta)}\right| & = \frac{1}{|E(c_{j_0}^{k_0},\beta)||E(c_{j}^{k_0+1},\beta)|} \left|\int_{E(c_{j_0}^{k_0},\beta)} \int_{E(c_{j}^{k_0+1},\beta)} (b(z)-b(w)) \, dV(w) \, dV(z)\right| \\
			& \leq C_\beta \left(\frac{1}{|E(c_{j_0}^{k_0},3 \beta)|^2} \int_ {E(c_{j_0}^{k_0},3\beta)} \int_{E(c_{j_0}^{k_0},3\beta)} |b(z)-b(w)|^p \, dV(w) \, dV(z) \right)^{1/p}\\
			& \leq 2 C_\beta \|b\|_{\textnormal{BMO}^{\:p}_{3 \beta}}
		\end{align*}
		by an application of H\"older's inequality and triangle inequality. 
	\end{proof}

		\begin{prop}\label{averagegrowthrate}
		Suppose that $b \in \textnormal{BMO}_{3 \beta}^{\:p}$. Then there exists a constant $C>0$ and a compact set $X \subset D$ so that the following hold:
		\begin{enumerate}
			\item [(1).] If $w \in K_j^k$ , then $|\widehat{b}_{\ell}(w)| \leq C (k \|b\|_{\textnormal{BMO}_{3 \beta}^{\: p}}+\|b\|_{L^p(X)}).$ 
			\item [(2).]  For $z,w \in D,$ $|\widehat{b}_{\ell}(z)-\widehat{b}_{\ell}(w)| \leq C \|b\|_{ \textnormal{BMO}_{3 \beta}^{\:p}} \max\{1, d_K(z,w) \}.$
		\end{enumerate}
	\end{prop}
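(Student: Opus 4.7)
The plan for (1) is to telescope up the dyadic tree. Given $w \in K_j^k$, let $K_j^k = K^{(k)} \subset K^{(k-1)} \subset \cdots \subset K^{(0)}$ denote the chain of ancestors in $\mathcal{T}_\ell$, with centers $c^{(i)} := c(K^{(i)})$. Since $\widehat{b}_{\ell}(w) = \langle b \rangle_{E(c^{(k)}, \beta)}$, applying Lemma \ref{differenceofaverages} to each consecutive parent--child pair and telescoping will yield
\begin{equation*}
\left| \widehat{b}_{\ell}(w) - \langle b \rangle_{E(c^{(0)}, \beta)} \right| \leq k\, C_\beta \|b\|_{\textnormal{BMO}^{\:p}_{3\beta}}.
\end{equation*}
Because $bD$ is compact and the level-$0$ boundary cells have uniformly positive surface measure, there are only finitely many level-$0$ kubes, and each Kobayashi ball $E(c_{j_0}^0, \beta)$ is relatively compact in $D$ (as $d_K$ blows up at $bD$). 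Taking $X$ to be a fixed compact subset of $D$ containing these finitely many balls together with $\overline{K^{-1}}$, H\"{o}lder's inequality will give $|\langle b \rangle_{E(c_{j_0}^0, \beta)}| \lesssim \|b\|_{L^p(X)}$ uniformly in $j_0$, completing (1).

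For (2), my plan is a two-step comparison: first bridge from the dyadic average to the \emph{continuous} Berezin-like average $\langle b \rangle_{E(\cdot,\beta)}$, then transport the continuous one along a curve. For the first step, since $z \in K(z) \subset E(c_{K(z)}, \beta)$ we have $d_K(z, c_{K(z)}) \leq \beta$, so both $E(z, \beta)$ and $E(c_{K(z)}, \beta)$ are contained in $E(z, 3\beta)$ with comparable volumes by Corollary \ref{VolumeKobayashiBalls}. Running the proof of Lemma \ref{differenceofaverages} in exactly this setting (the argument there uses only a common enclosing ball and volume comparability) will yield
\begin{equation*}
\left| \widehat{b}_{\ell}(z) - \langle b \rangle_{E(z, \beta)} \right| \leq C_\beta \|b\|_{\textnormal{BMO}^{\:p}_{3\beta}},
\end{equation*}
and symmetrically for $w$.

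For the second step, I would choose a piecewise $C^1$ curve from $z$ to $w$ of Kobayashi length at most $2 d_K(z, w) + 1$ and subdivide it into $N \lesssim 1 + d_K(z, w)$ equal pieces each of length $\leq \beta$, producing vertices $z = z_0, z_1, \ldots, z_N = w$ with $d_K(z_i, z_{i+1}) \leq \beta$. The same Lemma \ref{differenceofaverages}-style argument applied to each consecutive pair, together with telescoping, will give
\begin{equation*}
\left| \langle b \rangle_{E(z, \beta)} - \langle b \rangle_{E(w, \beta)} \right| \lesssim N \|b\|_{\textnormal{BMO}^{\:p}_{3\beta}} \lesssim \max\{1, d_K(z, w)\} \|b\|_{\textnormal{BMO}^{\:p}_{3\beta}}.
\end{equation*}
Combining this with the two dyadic-to-continuous comparisons from the first step will close (2).

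I do not anticipate a serious obstacle. The key observation is that Lemma \ref{differenceofaverages} is really a statement about any two Kobayashi balls of radius $\beta$ whose centers lie within Kobayashi distance $\beta$, rather than specifically about parent--child kubes; the walking argument then converts a global comparison into $O(1+d_K(z,w))$ local ones. The uniform volume comparabilities needed throughout are supplied by Corollary \ref{VolumeKobayashiBalls}, and the finiteness of the level-$0$ collection (needed for the compact set $X$ in (1)) is immediate from the compactness of $bD$.
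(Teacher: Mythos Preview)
Your proposal is correct and takes essentially the same approach as the paper: part (1) is handled identically by telescoping up the dyadic tree via Lemma \ref{differenceofaverages}, and part (2) via the same path-subdivision argument, with each local step reduced to the Lemma \ref{differenceofaverages} computation. The only organizational difference is that you isolate the ``dyadic-to-continuous'' bridging $|\widehat{b}_\ell(z) - \langle b\rangle_{E(z,\beta)}| \lesssim \|b\|_{\textnormal{BMO}_{3\beta}^{\,p}}$ as a separate first step, whereas the paper folds this into the endpoint terms of the telescoping sum and splits into the cases $d_K(z,w) < \beta$ and $d_K(z,w) \geq \beta$; your uniform treatment with $N \lesssim 1 + d_K(z,w)$ is arguably a bit cleaner but not substantively different.
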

	\begin{proof}
		For the first part, note that if $w \in K_j^k,$ then by definition $\widehat{b}_{\ell}(w) = \langle b \rangle_{E(c_j^k, \beta)}.$ Then, by a repeated application of the triangle inequality and using basically the argument in Lemma \ref{differenceofaverages}, we see that $|\widehat{b}_{\ell}(w)| \lesssim \langle |b| \rangle_{E(c_{j'}^0, 3\beta)}+ k \|b\|_{\textnormal{BMO}_{3 \beta}^{\:p}} ,$ 
		where $j'$ is the index such that $\widehat{K}_j^k \preceq \widehat{K}_{j'}^0.$ The first claim follows immediately. 
		
		For the second claim, we first consider the case when $d_K(z,w)<\beta.$ Suppose $z \in K_j^k $ and $w \in K_{j'}^{k'}.$ Then $|\widehat{b}_{\ell}(z)-\widehat{b}_{\ell}(w)|= \left| \langle b \rangle_{E(c_j^k, \beta)}-\langle b \rangle_{E(c_{j'}^{k'}, \beta)} \right|$ and notice $E(c_{j'}^{k'}, \beta) \subset E(c_j^k, 3 \beta).$  Then it follows from a straightforward application of Theorem \ref{BBTheorem} that $\delta(c_j^k) \approx \delta(c_{j'}^{k'}),$ with constants depending on $\beta,$ and so $|E(c_j^k,\beta)| \approx |E(c_{j'}^{k'},\beta)| \approx |E(c_j^k,3\beta)|.$ It is then straightforward to estimate that 
		\begin{eqnarray}
		|\widehat{b}_{\ell}(z)-\widehat{b}_{\ell}(w)|%
		&\lesssim&  \left( \frac{1}{|E(c_j^k,3\beta)|^2} \int_{E(c_j^k,3\beta)} \int_{E(c_j^k,3 \beta)} |b(z)-b(w)|^p \, dV(w) \, dV(z) \right)^{1/p} \nonumber \\
		&\lesssim& \|b\|_{\textnormal{BMO}_{3 \beta}^{\:p}}, \label{diffBerezin} \end{eqnarray}
		proving the required bound in the first case.  
		
		For the case $d_K(z,w)\geq\beta$, we let $\gamma:[0,1] \rightarrow D$ be a $C^1$ path connecting the points $z$ and $w$ such that 
		$$
		\int_{0}^{1} F_K(\gamma(t), \gamma'(t)) \, dt< 2 d_K(z,w).
		$$ 
		Let $N$ be the least positive integer so that $d_K(z,w)< N \beta,$ and observe that $N \geq 2$ and $d_K(z,w) \approx N \beta.$ Subdivide the path $\gamma$ with points $z_0,z_1,\dots, z_{2N},$ where $z_j=\gamma(t_j),$ the $t_j$ are increasing, $t_0=0, t_{2N}=1,$ and $d_K(z_j, z_{j+1})\leq \int_{t_j}^{t_{j+1}} F_{K}(\gamma(t), \gamma'(t)) \, dt< \beta$ for $1 \leq j \leq 2N-1.$ Then we may write 
		\begin{align*}
			&|\widehat{b}_{\ell}(z)-\widehat{b}_{\ell}(w)| =\left| \langle b \rangle_{E(c_j^k, \beta)}-\langle b \rangle_{E(c_{j'}^{k'}, \beta)} \right|   \\
			 & \leq  \left| \langle b \rangle_{E(c_j^k, \beta)}-\langle b \rangle_{E(z_1, \beta)} \right|+\sum_{j=1}^{2N-2}\left| \langle b \rangle_{E(z_j, \beta)}-\langle b \rangle_{E(z_{j+1}, \beta)} \right|+  \left| \langle b \rangle_{E(z_{2N-1}, \beta)}-\langle b \rangle_{E(c_{j'}^{k'}, \beta)} \right|.
		\end{align*}
		
		Using an argument analogous to \eqref{diffBerezin}, we can control each term in the sum by a constant multiple of $\|b\|_{\textnormal{BMO}_{3 \beta}^{\:p}}$, so $|\widehat{b}_{\ell}(z)-\widehat{b}_{\ell}(w)| \lesssim 2N\beta \|b\|_{\textnormal{BMO}_{3 \beta }^{\:p}} \approx d_K(z,w)  \|b\|_{\textnormal{BMO}_{3 \beta}^{\:p}} ,$ as required. 
	\end{proof}

\subsection{Proof of Theorem \ref{CommutatorBoundedMain}: sufficiency} We first show the boundedness of the commutator $[b, T_\epsilon]$.

	\begin{thm}\label{Kerzman main}
		Suppose that $q \geq 2$ and $b \in \textnormal{BMO}_{{3\beta}}^{\:q}.$ Then for $q' \leq p \leq q,$ the operator $[b,T_\varepsilon]$ is bounded on $L^p(D),$ and moreover $\|[b,T_\varepsilon]\|_{L^p(D) \rightarrow L^p(D)} \lesssim \|b\|_{\textnormal{BMO}_{3\beta}^{\:q}}.$ Additionally, the same estimates hold for $[b,T_\varepsilon^*].$
	\end{thm}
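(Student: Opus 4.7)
The plan is to fix one of the dyadic lattices $\mathcal{T}_\ell$, insert the associated dyadic Berezin transform $\widehat{b}_\ell$ into the kernel of $[b,T_\varepsilon]$, and split
\[
b(z)-b(w)\;=\;\bigl(b(z)-\widehat{b}_\ell(z)\bigr)\;-\;\bigl(b(w)-\widehat{b}_\ell(w)\bigr)\;+\;\bigl(\widehat{b}_\ell(z)-\widehat{b}_\ell(w)\bigr),
\]
which yields the operator decomposition
\[
[b,T_\varepsilon]\;=\;M_{b-\widehat{b}_\ell}\,T_\varepsilon\;-\;T_\varepsilon\,M_{b-\widehat{b}_\ell}\;+\;[\widehat{b}_\ell,T_\varepsilon].
\]
Since $b\in\textnormal{BMO}^{\,q}_{3\beta}\subseteq\textnormal{BMO}^{\,q}_{\beta}$ by Remark \ref{ContainmentRem}(1) and $q\geq 2$, each of the three preparatory results of this section applies.

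For the first piece I would use that $T_\varepsilon:L^p(D)\to A^p(D)$ is bounded, so $|T_\varepsilon f|^p$ is plurisubharmonic. Lemma \ref{carleson lemma}, applied at Carleson exponent $p\leq q$, then shows that $|b-\widehat{b}_\ell|^p\,dV$ is a Carleson measure on $A^p(D)$ with constant $\lesssim\|b\|_{\textnormal{BMO}^{\,q}_{3\beta}}^p$, giving $\|M_{b-\widehat{b}_\ell}T_\varepsilon f\|_p\lesssim\|b\|_{\textnormal{BMO}^{\,q}_{3\beta}}\|f\|_p$ whenever $p\leq q$. For the second piece I would argue by duality: pairing with $g\in L^{p'}(D)$ and applying Hölder's inequality gives
\[
\bigl|\langle T_\varepsilon M_{b-\widehat{b}_\ell}f,\,g\rangle\bigr|\;\leq\;\|f\|_p\Bigl(\int_D|b-\widehat{b}_\ell|^{p'}\,|T_\varepsilon^* g|^{p'}\,dV\Bigr)^{1/p'},
\]
and since $T_\varepsilon^* g$ is conjugate-holomorphic, $|T_\varepsilon^*g|^{p'}$ is plurisubharmonic; the same Carleson argument (whose proof via Corollary \ref{submean} uses only the submean value property, so extends to plurisubharmonic integrands) applies at exponent $p'\leq q$, i.e., $p\geq q'$. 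Together these yield the desired bound precisely on $q'\leq p\leq q$.

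The third piece $[\widehat{b}_\ell,T_\varepsilon]$ has integral kernel $T_\varepsilon(z,w)(\widehat{b}_\ell(z)-\widehat{b}_\ell(w))$. By Proposition \ref{averagegrowthrate}(2),
\[
|\widehat{b}_\ell(z)-\widehat{b}_\ell(w)|\;\lesssim\;\|b\|_{\textnormal{BMO}^{\,q}_{3\beta}}\,\max\{1,\,d_K(z,w)\}.
\]
The region $d_K(z,w)\leq 1$ contributes a kernel pointwise dominated by a constant multiple of $|T_\varepsilon(z,w)|$, hence an operator bounded on every $L^p$ by the same Schur argument used for $T_\varepsilon$ itself. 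On the region $d_K(z,w)>1$ I would apply Lemma \ref{metric estimate} with a small parameter $\eta>0$ and combine with $|T_\varepsilon(z,w)|\lesssim F(z,w)^{-(n+1)}$ from \eqref{20200914eq01} to dominate the kernel by
\[
\|b\|_{\textnormal{BMO}^{\,q}_{3\beta}}\;\frac{F(z,w)^{-(n+1)(1-2\eta)}}{|\rho(z)|^{(n+1)\eta}\,|\rho(w)|^{(n+1)\eta}}.
\]
Schur's test with test function $h(z)=|\rho(z)|^{-s}$, evaluated by Proposition \ref{RudinForelli}, then gives $L^p$-boundedness for every $1<p<\infty$ once $\eta,s>0$ are taken sufficiently small depending on $p$. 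Finally, the bound for $[b,T_\varepsilon^*]$ follows from the duality identity $[b,T_\varepsilon]^{*}=-[\bar b,T_\varepsilon^*]$ together with $\|\bar b\|_{\textnormal{BMO}^{\,q}_{3\beta}}=\|b\|_{\textnormal{BMO}^{\,q}_{3\beta}}$ and the symmetry of the range $[q',q]$ under $p\mapsto p'$. I expect the most delicate step to be the Schur-test bookkeeping for the long-range part of $[\widehat{b}_\ell,T_\varepsilon]$: one must select $\eta$ and $s$ so that the conditions $a>-1$ and $b>0$ in Proposition \ref{RudinForelli} are satisfied simultaneously in both Schur integrals, and propagate the $\|b\|_{\textnormal{BMO}^{\,q}_{3\beta}}$ factor cleanly; by contrast, the two Carleson-measure steps reduce immediately once one recognizes that the proof of Lemma \ref{carleson lemma} extends verbatim to plurisubharmonic integrands.
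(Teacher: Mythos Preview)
Your decomposition and the treatment of the first piece $M_{b-\widehat{b}_\ell}T_\varepsilon$ and of $[\widehat{b}_\ell,T_\varepsilon]$ match the paper's proof and are correct. The gap is in your handling of the second piece $T_\varepsilon M_{b-\widehat{b}_\ell}$.

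You claim that $T_\varepsilon^* g$ is conjugate-holomorphic, so that $|T_\varepsilon^* g|^{p'}$ is plurisubharmonic and the Carleson argument of Lemma~\ref{carleson lemma} applies. This is false. The operator $T_\varepsilon$ is a \emph{non-orthogonal} projection onto $A^2(D)$; its adjoint $T_\varepsilon^*$ is then a projection with range $(\ker T_\varepsilon)^\perp$, which is neither $A^2(D)$ nor $\overline{A^2(D)}$. Concretely, the kernel $T_\varepsilon(z,w)$ is holomorphic in the first variable $z$ (so that $T_\varepsilon f$ is holomorphic), but in the second variable it involves the cutoff $\chi(|w-z|^2)$ and the term $|w-z|^2$ appearing in $g_\varepsilon$, and is neither holomorphic nor anti-holomorphic there. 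Consequently $T_\varepsilon^* g(z)=\int_D\overline{T_\varepsilon(w,z)}\,g(w)\,dV(w)$ is in general neither holomorphic nor conjugate-holomorphic, and the sub-mean-value inequality of Corollary~\ref{submean} is unavailable.

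This is precisely the point where the paper says the argument ``requires slightly more delicate analysis.'' The fix is to abandon the Carleson route for this piece and use the kernel size directly: bound $|T_\varepsilon^* f(z)|$ by $\int_D F(z,w)^{-(n+1)}|f(w)|\,dV(w)$, apply H\"older with the Schur weight $|\rho|^{-\varepsilon}$ and Proposition~\ref{RudinForelli}, then Fubini, and finally invoke Lemma~\ref{SchurWithBMO}. That lemma replaces the sub-mean-value step by the locally-constant property of $F$ on Kobayashi balls (Proposition~\ref{comparability of F}), which requires no holomorphicity of the integrand. With this substitution the rest of your outline goes through as written.
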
	
 \begin{proof}
			Pick any dyadic system $\mathcal{T}_{\ell}$ and write  $[b,T_\varepsilon]=[b-\widehat{b}_\ell,T_\varepsilon]+[\widehat{b}_\ell,T_\varepsilon].$ We first claim that the operator $[b-\widehat{b}_\ell,T_\varepsilon]$ is bounded on $L^p(D).$ To see this, write $[b-\widehat{b}_\ell,T_\varepsilon]= (b-\widehat{b}_{\ell})T_\varepsilon- T_\varepsilon (b-\widehat{b}_{\ell}). $   The boundedness of $(b-\widehat{b}_{\ell})T_\varepsilon$ with norm controlled by $\|b\|_{\textnormal{BMO}_{3\beta}^{\:q}}$ is immediate from the fact that $T_\varepsilon$ is bounded from $L^p(D)$ to $A^p(D)$ (see \cite{LS2012}*{Corollary 2})  and the fact that $|b- \widehat{b}_{\ell}|^p$ is a Carleson measure on $A^p(D)$ by Lemma \ref{carleson lemma}. 
			
			The boundedness of $T_\varepsilon (b-\widehat{b}_{\ell})$ is similar but requires slightly more delicate analysis. First, note that the boundedness of $T_\varepsilon (b-\widehat{b}_{\ell})$ on $L^p(D)$ is equivalent to the boundedness of $(\bar{b}-\widehat{\bar{b}}_{\ell}) T_\varepsilon^*$ on $L^{p'}(D).$
			
			To this end, let $f \in L^{p'}(D).$ We estimate, using H\"{o}lder's inequality and Proposition \ref{RudinForelli}:
			\begin{eqnarray*}
			&& |(\bar{b}(z)-\widehat{\bar{b}}_{\ell}(z)) T_\varepsilon^* f(z)|^{p'} \leq |\bar{b}(z)-\widehat{\bar{b}}_{\ell}(z)|^{p'} \left( \int_{D} |\overline{T_{\varepsilon}(w,z)}|  |f(w)| \, dV(w) \right)^{p'}\\
				&& \lesssim |\bar{b}(z)-\widehat{\bar{b}}_{\ell}(z)|^{p'} \left( \int_{D} |F(z,w)|^{-(n+1)}  |\rho(w)|^{-\varepsilon p}\, dV(w) \right)^{p'/p}  \\
				&& \quad \quad \quad \quad \quad \quad \quad \quad \quad \quad \cdot \left( \int_{D} |F(z,w)|^{-(n+1)}  |f(w)|^{p'} |\rho(w)|^{\varepsilon p'} \, dV(w) \right)\\
				&&  \lesssim |\bar{b}(z)-\widehat{\bar{b}}_{\ell}(z)|^{p'
				}|\rho(z)|^{-\varepsilon p'}\left( \int_{D} |F(z,w)|^{-(n+1)}  |f(w)|^{p'} |\rho(w)|^{\varepsilon p'} \, dV(w) \right).
			\end{eqnarray*}
			Here, $\varepsilon$ is obviously chosen sufficiently small (depending on $p,p'$) so Proposition \ref{RudinForelli} can be applied. 
			
			Then, integrate this expression over $D$ in the variable $z$ and apply Fubini's theorem plus the fact that $F(z,w) \approx F(w,z)$ to obtain the expression
			$$
				\int_{D} \left( \int_{D}|\bar{b}(z)-\widehat{\bar{b}}_{\ell}(z)|^{p'} |\rho(z)|^{-\varepsilon p'}  |F(w,z)|^{-(n+1)} \, dV(z) \right)  |f(w)|^{p'} |\rho(w)|^{\varepsilon p'} \, dV(w).   
			$$
			
			Then, apply Lemma \ref{SchurWithBMO} (using the fact that $\|\bar{b}\|_{\textnormal{BMO}_{r}^{\: p'}}=\|b\|_{\textnormal{BMO}_{r}^{\: p'}} \leq \|b\|_{\textnormal{BMO}_{r}^{\: q}}$) to deduce that the above display is controlled by $\|b\|_{\textnormal{BMO}_{3\beta}^{\:q}}^{p'} \|f\|_{L^{p'}(D)}^{p'},$ as we sought to prove. 
			
			To show $[\widehat{b}_{\ell}, T_\varepsilon]$ is bounded on $L^p(D)$, we remark that the proof of \cite{Li1992}*{Theorem 18} can be adapted in this setting. 
			In particular, we estimate as follows for $f \in L^p(D)$, using Proposition \ref{averagegrowthrate}:
			\begin{align*}
				|[\widehat{b}_{\ell}, T_\varepsilon]f(z)| & = \left|\int_{D} T_\varepsilon(z,w) \left(\widehat{b}_{\ell}(z)-\widehat{b}_{\ell}(w)\right)f(w) \, dV(w) \right|\\
				& \leq \int_{D} \frac{|\widehat{b}_{\ell}(z)-\widehat{b}_{\ell}(w)|}{F(z,w)^{n+1}}|f(w)| \, dV(w)\\
				& \lesssim \|b\|_{ \textnormal{BMO}_{3\beta}^{\:p}} \int_{D} \frac{(1+d_K(z,w))}{F(z,w)^{n+1}}|f(w)| \, dV(w).
			\end{align*}
We can then estimate the $L^p$ norm of $[\widehat{b}_{\ell}, T_\varepsilon]f$ using the triangle inequality and Lemma \ref{metric estimate}:
			\begin{align*}
				& \left(\int_{D} |[\widehat{b}_{\ell}, T_\varepsilon]f(z)|^p \, dV(z)\right)^{1/p} \\
				&\lesssim  
				\|b\|_{ \textnormal{BMO}_{3\beta}^{\:p}} \left(\int_{D} \left(\int_{D} \frac{1}{F(z,w)^{n+1}}|f(w)| \, dV(w)\right)^p \, dV(z) \right)^{1/p}\\
				& \quad +  \|b\|_{ \textnormal{BMO}_{3\beta}^{\:p}} \left(\int_{D} \left(\int_{D} \frac{|\rho(z)|^{-\varepsilon(n+1)}|\rho(w)|^{-\varepsilon(n+1)} }{F(z,w)^{(n+1)(1-2\varepsilon)}}|f(w)| \, dV(w)\right)^p \, dV(z) \right)^{1/p}.
			\end{align*}

			Both of these terms can be handled using Schur's Test as long as $\varepsilon$ is chosen sufficiently small. In particular, we can apply Proposition \ref{RudinForelli} with test function $|\rho|^{-\varepsilon}$ for the first term with kernel $\frac{1}{F(z,w)^{n+1}}$ and test function $|\rho|^{-\varepsilon(n+1)}$ for the second term with kernel $\frac{|\rho(z)|^{-\varepsilon(n+1)}|\rho(w)|^{-\varepsilon(n+1)} }{F(z,w)^{(n+1)(1-2\varepsilon)}}$. 
			This yields that the above display is controlled by an independent constant times $\|b\|_{ \textnormal{BMO}_{3\beta}^{\:p}} \|f\|_{L^p(D)}.$ By the containment of BMO spaces, this expression is further dominated by an independent constant times $\|b\|_{ \textnormal{BMO}_{3\beta}^{\:q}} \|f\|_{L^p(D)}.$
			
			Putting all this together, we conclude that that the operator $[b,T_\varepsilon]$ is bounded on $L^p(D)$ with $\|[b,T_\varepsilon]\|_{L^p(D) \rightarrow L^p(D)} \lesssim \|b\|_{\textnormal{BMO}_{3\beta}^{\:q}}.$
	
	        Finally, the estimate for $[b, T_\varepsilon^*]$ may be obtained directly in a similar way, or one can simply note that $([b, T_\varepsilon^*])^*= -[\bar{b}, T_\varepsilon] $ and proceed by duality. 
			\end{proof}

	\begin{thm} \label{boundednessmain} Suppose that $D$ is strongly pseudoconvex with $C^2$ boundary, $q \geq 2,$ and $b \in \textnormal{BMO}_{3\beta}^{\:q}.$  Then the commutator $[b,P]$ is bounded on $L^p(D)$ for $q' \leq p \leq q$  and satisfies $\|[b,P]\|_{L^p(D) \rightarrow L^p(D)} \lesssim \|b\|_{\textnormal{BMO}_{3\beta}^{\:q}}.$
	\end{thm}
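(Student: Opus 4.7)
The plan is to transfer the boundedness of the auxiliary commutator $[b, T_\varepsilon]$ (already established in Theorem \ref{Kerzman main}) to $[b, P]$ via a Kerzman-Stein-type operator identity relating the two projections. Since both $P$ and $T_\varepsilon$ are projections onto $A^2(D)$, we have $T_\varepsilon P = T_\varepsilon$; taking adjoints and using $P^{*}=P$ yields $T_\varepsilon^{*} P = P$. Subtracting these two identities gives, on $L^2(D)$,
$$
(I - A)\,P = T_\varepsilon, \qquad A := T_\varepsilon^{*} - T_\varepsilon.
$$
By the analysis in \cite{LS2012}, for $\varepsilon>0$ chosen sufficiently small the operator $I - A$ is invertible on $L^p(D)$ for each $p \in (1,\infty)$, and in fact this is precisely the mechanism by which $P = (I - A)^{-1} T_\varepsilon$ is shown to be $L^p$-bounded in the $C^2$-category. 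I fix such an $\varepsilon$, uniformly for the compact range $p \in [q',q]$.

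Next, I apply the Leibniz rule $[b, S T] = [b, S] T + S [b, T]$ to both sides of $(I - A) P = T_\varepsilon$, obtaining
$$
(I - A)[b, P] - [b, A] P = [b, T_\varepsilon].
$$
Since $[b,A] = [b, T_\varepsilon^{*}] - [b, T_\varepsilon]$, rearranging gives
$$
(I - A)[b,P] = [b, T_\varepsilon]\,(I - P) + [b, T_\varepsilon^{*}]\,P,
$$
and applying $(I - A)^{-1}$ yields the key representation
$$
[b, P] = (I - A)^{-1}\Bigl([b, T_\varepsilon](I - P) + [b, T_\varepsilon^{*}] P\Bigr).
$$

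With this formula in hand, the desired estimate is an immediate composition of bounded operators. The resolvent $(I - A)^{-1}$ and the Bergman projection $P$ are bounded on $L^p(D)$ by \cite{LS2012}, with norms independent of $b$, while by Theorem \ref{Kerzman main} the commutators $[b, T_\varepsilon]$ and $[b, T_\varepsilon^{*}]$ are bounded on $L^p(D)$ with norm $\lesssim \|b\|_{\textnormal{BMO}_{3\beta}^{\:q}}$. Composing these estimates gives
$$
\|[b, P]\|_{L^p(D) \to L^p(D)} \lesssim \|b\|_{\textnormal{BMO}_{3\beta}^{\:q}},
$$
as required.

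The main obstacle is to justify that the above operator identity is legitimate on all of $L^p(D)$, rather than merely on a dense subspace, given that $b$ is only in a BMO class and need not be bounded. This can be handled by first verifying the identity on $f \in L^\infty(D) \cap L^p(D)$, where each of $bf$, $Pf$, $T_\varepsilon f$, $T_\varepsilon^{*} f$ lies in $L^p(D)$ and the pointwise manipulations are transparent, and then extending by density using the $L^p$-boundedness of each component operator already established. Once this technicality is discharged, the proof is complete.
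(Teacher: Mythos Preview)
Your overall strategy---transfer boundedness from $[b,T_\varepsilon]$ to $[b,P]$ via a Kerzman--Stein identity---is the same as the paper's, but there is one derivation slip and one genuine gap.

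The slip: since $T_\varepsilon$ reproduces holomorphic functions and $Pf\in A^2$, one has $T_\varepsilon P=P$, not $T_\varepsilon P=T_\varepsilon$; likewise $T_\varepsilon^*P=T_\varepsilon^*$ (adjoint of $PT_\varepsilon=T_\varepsilon$), not $T_\varepsilon^*P=P$. The clean identity is therefore $P\,S_\varepsilon=T_\varepsilon$ with $S_\varepsilon=I-A$ on the \emph{right} of $P$, so the commutator relation reads $[b,P]S_\varepsilon=[b,T_\varepsilon]-P[b,S_\varepsilon]$ and one must invert $S_\varepsilon$ from the right. This is easily repaired and not the real issue.

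The real gap is your appeal to \cite{LS2012} for the invertibility of $I-A=S_\varepsilon$ on $L^p(D)$ when $p\neq 2$. That is \emph{not} what \cite{LS2012} proves, and the present paper explicitly notes that $S_\varepsilon$ may fail to be invertible on $L^p$. On $L^2$ the invertibility is free because $A=T_\varepsilon^*-T_\varepsilon$ is skew-adjoint, but for general $p$ the $L^p$-norm of $A$ is not small; only the principal part $A_\varepsilon$ in the finer splitting $A=A_\varepsilon+B_\varepsilon$ of \cite{LS2012}*{Lemma~5.1} satisfies $\|A_\varepsilon\|_{L^p}\le C_p\,\varepsilon$, while $B_\varepsilon$ is merely smoothing ($L^1\to C(\overline D)$). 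Consequently one can only invert $I-A_\varepsilon$, leaving
\[
[b,P](I-A_\varepsilon)=[b,T_\varepsilon]-P[b,S_\varepsilon]+[b,P]B_\varepsilon,
\]
which still contains $[b,P]$ on the right. The paper handles this by a two-step bootstrap: first settle $p=2$ (where $S_\varepsilon^{-1}$ exists), then for $q'\le p<2$ use the smoothing of $B_\varepsilon$ to route $[b,P]B_\varepsilon$ through $L^2$, where the bound is already in hand; the case $p>2$ follows by duality. Your argument is missing exactly this mechanism, so as written it only proves the case $p=2$.
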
 
  
		\begin{proof}
			Let $q \geq 2.$ We first prove the theorem in the case $p=2$ and then show how to deduce the other values of $p.$ In this first case, the choice of $\varepsilon$ does not matter, so fix any $\varepsilon>0.$ Put $S_\varepsilon=(I-(T_\varepsilon^*-T_\varepsilon))$ and recall the classical Kerzman-Stein-Ligocka identities, valid on $L^2(D)$:
			\begin{equation}
				PT_\varepsilon= T_\varepsilon, \hspace{0.5 cm} T_\varepsilon P= P.
				\label{ClassicKS}
			\end{equation}
			
			Taking adjoints of the second identity and using the fact that $P$ is self-adjoint, we obtain the related identity 
			\begin{equation}
				PT_\varepsilon^*= P.
				\label{AdjointKS}
			\end{equation}
			We can now obtain a related identity that relates the commutators. We begin by noting that using \eqref{ClassicKS} plus \eqref{AdjointKS}, $P S_{\varepsilon}= T_\varepsilon.$ Then formally we have
			\begin{align*}
				[b,P]S_\varepsilon & = b T_\varepsilon -Pb S_\varepsilon\\
				& = b T_\varepsilon - P[b, S_\varepsilon]-P S_\varepsilon b\\
				& = [b,T_\varepsilon]-P[b, S_\varepsilon].
			\end{align*}
			
			Moreover, this identity makes sense for $f \in L^\infty(D)$, for then $bf \in L^2(D)$ and the operators $P$, $T_\varepsilon$, and $S_\varepsilon$ are all bounded on $L^2(D).$ Note that in light of the hypothesis on $b$ and Theorem  \ref{Kerzman main}, it is clear that the right hand side is a bounded operator on $L^2(D)$ with $ \|[b,T_\varepsilon]-P[b, S_\varepsilon]\|_{L^2(D) \rightarrow L^2(D)} \lesssim \|b\|_{\textnormal{BMO}_{3\beta}^{\:q}}$
			Moreover, since the operator $T_\varepsilon^*-T_\varepsilon$ has a purely imaginary spectrum on $L^2(D)$, it is clear that $S_\varepsilon$
			is invertible on $L^2(D)$, with norm only depending on $\varepsilon$ and $D$. Therefore, we conclude that $[b,P]$ extends to a bounded operator on $L^2(D)$ and satisfies $\|[b,P]\|_{L^2(D) \rightarrow L^2(D)} \lesssim \|b\|_{\textnormal{BMO}_{3\beta}^{\:q}}$, completing the proof when $p=2.$
			
			We now turn to the case $p \neq 2.$ Using the fact that $([b,P])^*= -[\bar{b},P]$, where $\bar{b}$ denotes the complex conjugate of $b$, by duality it suffices to prove the theorem for $q' \leq p<2.$ Here, the operator $S_\varepsilon$ may fail to be invertible on $L^p(D)$, so we will need the ``finer" decomposition of $S_\varepsilon= I-(A_\varepsilon+B_\varepsilon)$ that is provided in \cite{LS2012}*{Lemma 5.1}. Here, the operator $A_\varepsilon$ satisfies the key estimate $\|A_\varepsilon\|_{L^p(D)} \leq \varepsilon C_p$,  and the operator $B_\varepsilon$ is ``smoothing" in the sense that it maps $L^1(D)$ to $C(\bar{D}).$ Choose $\varepsilon$ sufficiently small so that $\|A_\varepsilon\|_{L^p(D)} \leq \frac{1}{2}$ and the operator $(I-A_\varepsilon)$ is invertible on $L^p(D).$ Therefore, we may write formally (and the formula again makes sense when applied to $f \in L^\infty(D)$):
			$$[b,P](I-A_\varepsilon)= [b,T_\varepsilon]-P[b, S_\varepsilon] +[b,P]B_\varepsilon $$
			and inverting $(I-A_\varepsilon)$ we obtain
			$$[b,P]= [b,T_\varepsilon](I-A_\varepsilon)^{-1}-P[b, S_\varepsilon](I-A_\varepsilon)^{-1} +[b,P]B_\varepsilon (I-A_\varepsilon)^{-1}.$$ 
			Now, using Theorem \ref{Kerzman main}, we deduce that $\|[b,T_\varepsilon](I-A_\varepsilon)^{-1}-P[b, S_\varepsilon](I-A_\varepsilon)^{-1}\|_{L^p(D) \rightarrow L^p(D)} \lesssim \|b\|_{\textnormal{BMO}_{3\beta}^{\:q}} $. To handle the operator $[b,P]B_\varepsilon (I-A_\varepsilon)^{-1}$ , observe that for $f \in L^p(D)$, using the fact that $1<p<2$, the previously proved fact that $[b,P]$ is bounded on $L^2(D),$ and the mapping properties of $B_\varepsilon$:
			\begin{align*}
				\| [b,P]B_\varepsilon (I-A_\varepsilon)^{-1}f \|_{L^p(D)} & \lesssim    \| [b,P]B_\varepsilon (I-A_\varepsilon)^{-1}f \|_{L^2(D)}  \\
				\lesssim \|b\|_{\textnormal{BMO}_{3\beta}^{\:q}} \| B_\varepsilon (I-A_\varepsilon)^{-1}f \|_{L^2(D)} & \lesssim \|b\|_{\textnormal{BMO}_{3\beta}^{\:q}} \| B_\varepsilon (I-A_\varepsilon)^{-1}f \|_{L^\infty(D)}\\
				\lesssim  \|b\|_{\textnormal{BMO}_{3\beta}^{\:q}}\| (I-A_\varepsilon)^{-1}f \|_{L^1(D)} & \lesssim  \|b\|_{\textnormal{BMO}_{3\beta}^{\:q}} \| (I-A_\varepsilon)^{-1}f \|_{L^p(D)}\\
				& \lesssim \|b\|_{\textnormal{BMO}_{3\beta}^{\:q}} \| f \|_{L^p(D)},
			\end{align*}
			proving the required bound. This completes the proof.
			\end{proof}

	Here and henceforth, let $H_b$ denote the Hankel operator $H_b=(I-P)M_b P,$ where $M_b$ denotes the multiplication operator with symbol $b$.
	We obtain the following corollary concerning a sufficient condition for the boundedness of Hankel operators. 
	\begin{cor}\label{CommutatorBoundedSuff}Suppose that $D$ is strongly pseudoconvex with $C^2$ boundary, $q \geq 2$, $b \in \textnormal{BMO}_{3\beta}^{\:q},$ and $q' \leq p \leq q.$ Then the Hankel operator $H_b$ is bounded on $L^p(D)$ and satisfies $\|H_b\|_{L^p(D) \rightarrow L^p(D)} \lesssim \|b\|_{\textnormal{BMO}_{3\beta}^{\:q}}.$ The same estimate is true for $H_{\bar{b}}^*.$
		\end{cor}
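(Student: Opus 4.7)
The plan is to reduce the boundedness of the Hankel operator $H_b$ to that of the commutator $[b,P]$, which is already granted by Theorem \ref{boundednessmain}. The starting observation is the algebraic identity
\begin{equation*}
H_b = (I-P) M_b P = M_b P - P M_b P = (M_b P - P M_b) P = [b,P] P,
\end{equation*}
where in the third equality we used $P^2 = P$. Since the Bergman projection $P$ is known to be bounded on $L^p(D)$ for $1<p<\infty$ in the $C^2$-strongly pseudoconvex category (this is precisely the main result of \cite{LS2012}), and since Theorem \ref{boundednessmain} gives the bound $\|[b,P]\|_{L^p(D)\to L^p(D)} \lesssim \|b\|_{\textnormal{BMO}_{3\beta}^{\:q}}$ for all $q'\leq p\leq q$, composing the two bounded operators yields the claimed estimate for $H_b$.

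For $H_{\bar b}^*$ the plan is to invoke duality. Observe first that the range of exponents $[q',q]$ is stable under $p\mapsto p'$: if $q'\leq p\leq q$ then, inverting, $q'\leq p'\leq q$ as well (using $q\geq 2$ so $q'\leq 2\leq q$). Since $\|\bar b\|_{\textnormal{BMO}_{3\beta}^{\:q}} = \|b\|_{\textnormal{BMO}_{3\beta}^{\:q}}$, the identity $H_{\bar b} = [\bar b, P]P$ together with Theorem \ref{boundednessmain} gives that $H_{\bar b}$ is bounded on $L^{p'}(D)$ with norm $\lesssim \|b\|_{\textnormal{BMO}_{3\beta}^{\:q}}$; taking Banach-space adjoints delivers the same bound for $H_{\bar b}^*$ on $L^p(D)$. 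Alternatively, one can derive the explicit identity $H_{\bar b}^* = -P\,[b,P]$ from the same calculation (noting $P\,[b,P] = PM_bP - PM_b = -(PM_b(I-P)) = -H_{\bar b}^*$) and conclude directly via the $L^p$-boundedness of $P$ and Theorem \ref{boundednessmain}.

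There is essentially no obstacle: the entire content is the algebraic manipulation $H_b = [b,P]P$ (and its adjoint counterpart), together with the already-established facts that $P$ is $L^p$-bounded and that Theorem \ref{boundednessmain} controls $[b,P]$. The only minor point requiring care is the symmetry of the interval $[q',q]$ under Hölder conjugation when handling $H_{\bar b}^*$, which is immediate from $q\geq 2$.
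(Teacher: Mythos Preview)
Your proof is correct and follows essentially the same approach as the paper: both rely on the identities $H_b=[b,P]P$ and $H_{\bar b}^*=-P[b,P]$, together with Theorem \ref{boundednessmain} and the $L^p$-boundedness of $P$ from \cite{LS2012}. The paper's proof is a one-line invocation of these identities, while you have spelled out their derivations and offered an alternative duality route for $H_{\bar b}^*$; the content is the same.
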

		\begin{proof}
			This corollary is an easy consequence of Theorem \ref{boundednessmain} together with the identities $H_b=[b,P]P $ and $H_{\overline{b}}^*=-P[b,P].$
			\end{proof}
		
\subsection{Proof of Theorem \ref{CommutatorBoundedMain}: necessity} We now 
show that $b\in \textnormal{BMO}_r^{\:q}$ for some (hence all) $r>0$, under the assumption that $[b, P]$ is bounded on $L^p$, for $q' \le p \le q$. The arguments in this section are motivated by those in \cite{Li1994}, however, we notably extend some of the results to the case when $p \neq 2.$ These techniques enable us to obtain the necessary part without the need for direct kernel estimates for the Bergman projection $P$ or passing through the Kerzman-Stein-Ligocka operator $T_\epsilon$. 
		
We begin with defining a class of spaces that characterize functions with ``bounded'' local distances to holomorphic functions. These spaces are known as ``\emph{bounded distance to analytic},'' and were introduced by Luecking in \cite{Lu1992} and appear later in \cite{Li1994, LiLu1994} in the characterization of bounded Hankel operators on strongly pseudoconvex domains with smooth boundaries.  
		
		\begin{defin}
			Let $1 \le p<\infty$ and $r>0.$ Then we say $b$ belongs to $\textnormal{BDA}_r^{\:p}$ if 
			$$\|b\|_{\textnormal{BDA}_r^{\:p}}^p:= \sup_{z \in D} b_{r,p}(z)<\infty,$$ where
			$$b_{r,p}(z):= \inf_{h \in \textnormal{Hol}(D)} \frac{1}{|E(z,r)|} \int_{E(z,r)}|b-h|^p \,dV .$$
			
		\end{defin}

		The following theorem can be found in \cite{Li1994} in the in the special case when $p=2$  and $D$ is a smoothly bounded, strongly pseudoconvex domain. We first need the following
		
		\begin{lemma}\label{PolydiskImBound}
			Let $p \in (1,\infty).$ Then there exists $C_p>0$ so that for all $h \in \textnormal{Hol}(D)$, $z \in D$, and $\delta>0$ sufficiently small depending on $z$, there holds:
			$$
			\int_{P(z,\delta)}|h-\overline{h}(z)|^p \, dV \leq C_p \int_{P(z,\delta)} |\textnormal{Im}(h)|^p \, dV.
			$$
   	\end{lemma}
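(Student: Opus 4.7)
The plan is to exploit the product structure of the anisotropic polydisk $P(z,\delta)$ together with the $L^p$-boundedness of the Bergman projection on the unit polydisk. Since $\delta$ is taken small enough that $P(z,\delta)\subset D$, the mean-value property applied coordinate-by-coordinate gives $h(z)=|P(z,\delta)|^{-1}\int_{P(z,\delta)} h\,dV$. Interpreting $\overline{h}(z)$ as $\overline{h(z)}$, I decompose
$$
h(w)-\overline{h(z)} \;=\;\big(h(w)-h(z)\big)\;+\;2i\,\textnormal{Im}(h(z)),
$$
and, by Jensen applied to the mean-value identity, $|\textnormal{Im}(h(z))|^p\le |P(z,\delta)|^{-1}\int_{P(z,\delta)}|\textnormal{Im}(h)|^p\,dV$. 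Hence, up to a multiplicative factor that depends only on $p$, the lemma reduces to showing
$$
\int_{P(z,\delta)}|g|^p\,dV \;\le\; C_p\int_{P(z,\delta)}|\textnormal{Im}(g)|^p\,dV
$$
for the holomorphic function $g:=h-h(z)$, which vanishes at $z$.

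Next, I would pass to the unit polydisk $\mathbb{D}^n$ through the affine rescaling
$$
\phi:\mathbb{D}^n\to P(z,\delta),\qquad \phi(w)=z+\bigl(\delta w_1,\,\delta^{1/2}w_2,\ldots,\delta^{1/2}w_n\bigr),
$$
where the first coordinate is aligned with the complex normal direction at $z$ as in Lemma \ref{PolyContain}. The Jacobian is a nonzero constant that cancels out of both sides of the inequality, so setting $G:=g\circ\phi$ the problem is reduced to the scale-invariant statement
$$
\int_{\mathbb{D}^n}|G|^p\,dV \;\le\; C_p\int_{\mathbb{D}^n}|\textnormal{Im}(G)|^p\,dV,\qquad G\in\textnormal{Hol}(\mathbb{D}^n),\ G(0)=0.
$$

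Finally, I would close this out via the Bergman projection $P_0$ of $\mathbb{D}^n$, which is classically bounded on $L^p(\mathbb{D}^n)$ for every $1<p<\infty$. The key identity is that, by orthogonality of the monomials $w^\alpha$ in $A^2(\mathbb{D}^n)$, one has $P_0(\bar G)=\overline{G(0)}=0$; combined with $P_0(G)=G$ this yields
$$
P_0\bigl(\textnormal{Im}(G)\bigr)\;=\;P_0\!\left(\frac{G-\bar G}{2i}\right)\;=\;\frac{G}{2i},
$$
so that $\|G\|_{L^p(\mathbb{D}^n)}\le 2\|P_0\|_{L^p\to L^p}\|\textnormal{Im}(G)\|_{L^p(\mathbb{D}^n)}$, which is exactly what is needed. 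Conceptually there is no serious obstacle: the anisotropic polydisk trivializes to the standard polydisk under $\phi$, and the identity $G=2i\,P_0(\textnormal{Im}\,G)$ on $\mathbb{D}^n$ is a polydisk-version of the classical M. Riesz theorem. The only point that requires some care is the choice of pointwise normalization — here $h(z)$ rather than $\overline{h(z)}$ — so that what remains is \emph{holomorphic} and vanishes at the base point, which is precisely what unlocks the Bergman-projection identity.
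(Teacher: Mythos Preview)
Your proof is correct and uses essentially the same mechanism as the paper: the $L^p$-boundedness of the polydisk Bergman projection together with the identity $P_0(\bar G)=\overline{G(0)}$ (equivalently, the paper's $P_{P(z,\delta)}(\bar h)=\overline{h(z)}$). The only difference is organizational---the paper applies the projection directly to $h-\bar h$ on $P(z,\delta)$ and obtains $h-\overline{h(z)}$ in one stroke, whereas you first subtract $h(z)$, handle the constant $2i\,\textnormal{Im}(h(z))$ by Jensen, and then transfer to $\mathbb{D}^n$; this is a minor detour but not a genuinely different route.
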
 
			\begin{proof}
				Let $P_0$ denote the unit polydisk in the standard coordinate directions centered at the origin. This statement is a consequence of the boundedness of the Bergman projection $P_{P_{0}}$ associated with the polydisk $P_0$ on $L^p(P_0).$ To be precise, let $C_p= \|P_{P_0}\|_{L^p(P_0) \rightarrow L^p(P_0)}.$ Then a standard change of variables using affine maps shows that $\|P_{P(z,\delta)}\|_{L^p(P(z,\delta)) \rightarrow L^p(P(z,\delta))}= \|P_{P_0}\|_{L^p(P_0) \rightarrow L^p(P_0)} =C_p.$ Therefore, in particular, the Bergman projection $P_{P(z,\delta)}$ is bounded on $L^p(P(z,\delta))$ with norm independent of $z$ and $\delta.$ 
				
				Now, take $h \in \textnormal{Hol}(D).$ Then note that $P_{P(z,\delta)}(h-\overline{h})= 2 \ii P_{P(z,\delta)}(\text{Im}(h)).$ We claim that $P_{P(z,\delta)}(h-\overline{h})= h-\overline{h}(z).$ Assuming the claim and using the boundedness of $P_{P(z,\delta)}$, we have 
				\begin{align*}
					\int_{P(z,\delta)}|h-\overline{h}(z)|^p \, dV  & = \int_{P(z,\delta)}|P_{P(z,\delta)}(h-\overline{h})|^p \, dV \\
					&\leq 2^p C_p^p \int_{P(z,\delta)} |\text{Im}(h)|^p \, dV.
				\end{align*}
				Renaming $C_p$ as $2^p C_p^p$, the result is proven. Thus, it remains to establish the claim. Note $P_{P(z,\delta)}(h-\overline{h})=h-P_{P(z,\delta)}(\overline{h}).$ Also note that as a consequence of the mean value property, $\overline{h}-\overline{h}(z)$ is in the orthogonal complement of $A^2(P(z,\delta)),$ so $P_{P(z,\delta)}(\overline{h}-\overline{h}(z))=0.$ Then we have $P_{P(z,\delta)}(\overline{h})= \overline{h}(z)+P_{P(z,\delta)}(\overline{h}-\overline{h}(z))= \overline{h}(z)$, which establishes the claim. 
			\end{proof}
		
		\begin{thm}\label{BDA BMO Rel} For any sufficiently large $r$ there holds $$\textnormal{BDA}_r^{\:p} \cap \overline{\textnormal{BDA}_r^{\:p}}= \textnormal{BMO}_{r}^{\:p}.$$ 
  	\end{thm}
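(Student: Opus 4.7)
The inclusion $\textnormal{BMO}_r^{\:p}\subseteq \textnormal{BDA}_r^{\:p}\cap \overline{\textnormal{BDA}_r^{\:p}}$ is the easy direction: for $b\in\textnormal{BMO}_r^{\:p}$, the constant $h = \langle b\rangle_{E(z,r)}$ is in particular holomorphic, so $b_{r,p}(z)\leq \|b\|_{\textnormal{BMO}_r^{\:p}}^p$; replacing $b$ by $\bar b$ gives the symmetric statement. I expect this to take only a few lines.

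For the reverse inclusion, I fix $b\in \textnormal{BDA}_r^{\:p}\cap\overline{\textnormal{BDA}_r^{\:p}}$ and $z\in D$ and choose near-optimal holomorphic approximants $h=h_z$ and $g=g_z$ with
\[
\int_{E(z,r)} |b - h|^p\, dV + \int_{E(z,r)} |b - \bar g|^p\, dV \lesssim \bigl(\|b\|_{\textnormal{BDA}_r^{\:p}}^p + \|b\|_{\overline{\textnormal{BDA}_r^{\:p}}}^p\bigr)|E(z,r)|.
\]
The triangle inequality then yields $\int_{E(z,r)}|h-\bar g|^p\, dV\lesssim |E(z,r)|$. I pass to the pluriharmonic function $f := \tfrac12(h+\bar g)$, which satisfies $\int_{E(z,r)}|b-f|^p\, dV\lesssim |E(z,r)|$. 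Since $\int |b-\langle b\rangle|^p$ is dominated by $\int |b-c|^p$ for any constant $c$, taking $c := f(z) = \tfrac12(h(z)+\overline{g(z)})$ reduces the proof to controlling the oscillation of $f$ at $z$ in $L^p$.

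The key analytic input is Lemma \ref{PolydiskImBound}, applied to the holomorphic function $H := h+g$ on the polydisk $P(z,a(r)\delta(z))\subset E(z,r)$ (containment from Lemma \ref{PolyContain}). Using the identity $\textnormal{Im}(h+g) = \textnormal{Im}(h-\bar g)$ and the pointwise bound $|\textnormal{Im}(h-\bar g)|\leq |h-\bar g|$, I get
\[
\int_{P(z,a(r)\delta(z))} \bigl|(h+g) - \overline{(h+g)(z)}\bigr|^p\, dV \lesssim \int_{E(z,r)} |h-\bar g|^p\, dV \lesssim |E(z,r)|.
\]
A sub-mean-value estimate for the harmonic function $h-\bar g$ at $z$ (its $p$th power is subharmonic) bounds $|h(z)-\overline{g(z)}|$ in terms of averages, which lets me trade the conjugate in $\overline{(h+g)(z)}$ for $(h+g)(z)$ up to an acceptable additive error. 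The decomposition $h+\bar g = (h+g) - (g-\bar g)$ combined with analogous control on $g-\bar g$ (obtained by running the same reasoning with the roles of $h$ and $g$ interchanged, noting $|\bar h - g| = |h-\bar g|$) then yields an oscillation bound for $f$ on the polydisk.

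Finally, I transfer the polydisk estimate back to a Kobayashi ball: for $r$ sufficiently large, Lemma \ref{PolyContain} gives $a(r)\to\infty$, so $P(z,a(r)\delta(z))$ contains a Kobayashi ball $E(z,r_0)$ of fixed radius $r_0$ independent of $z$. This furnishes $\|b\|_{\textnormal{BMO}_{r_0}^{\:p}}\lesssim \|b\|_{\textnormal{BDA}_r^{\:p}} + \|b\|_{\overline{\textnormal{BDA}_r^{\:p}}}$, and the radius-independence of the BMO norm (Theorem \ref{BMOequivalence}, together with the soft containment in Remark \ref{ContainmentRem}(1)) upgrades this to $b\in\textnormal{BMO}_r^{\:p}$. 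The principal technical obstacle is the bookkeeping needed to bridge the discrepancy $(h+g)-(h+\bar g) = 2i\,\textnormal{Im}(g)$: the oscillation of $\textnormal{Im}(g)$ is not directly small, and splitting $\textnormal{Im}(g)$ between the two BDA hypotheses (via $\textnormal{Im}(h)+\textnormal{Im}(g)=\textnormal{Im}(h-\bar g)$) requires a careful symmetric treatment together with the subharmonicity of $|h-\bar g|^p$ to avoid a circular dependence on the very BMO norm we are trying to control.
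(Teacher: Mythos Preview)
Your overall framework---apply Lemma \ref{PolydiskImBound} on a polydisk $P(z,a(r)\delta(z))\subset E(z,r)$, then transfer back to a smaller Kobayashi ball $E(z,r')\subset P$ and invoke Theorem \ref{BMOequivalence}---matches the paper's strategy exactly. The gap is in the step where you claim ``analogous control on $g-\bar g$'' by ``running the same reasoning with the roles of $h$ and $g$ interchanged.'' Interchanging $h$ and $g$ in your identity $\textnormal{Im}(h+g)=\textnormal{Im}(h-\bar g)$ returns the \emph{same} identity (since $h+g=g+h$), and the observation $|\bar h - g|=|h-\bar g|$ likewise yields no new information: either way you only control $\textnormal{Im}(h)+\textnormal{Im}(g)$ in $L^p(P)$, not $\textnormal{Im}(h)-\textnormal{Im}(g)=2\,\textnormal{Im}(f)$. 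Subharmonicity of $|h-\bar g|^p$ bounds the \emph{point value} $|h(z)-\overline{g(z)}|$, but says nothing about the oscillation of $\textnormal{Im}(g)$ over $P$. So the ``careful symmetric treatment'' you allude to does not close the argument as written; you correctly flag this as the principal obstacle but do not actually resolve it.

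The paper sidesteps this entirely with a one-line reduction you are missing: since both $b$ and $\bar b$ lie in $\textnormal{BDA}_r^{\:p}$ (this is exactly the hypothesis $b\in\textnormal{BDA}_r^{\:p}\cap\overline{\textnormal{BDA}_r^{\:p}}$) and $\textnormal{BDA}_r^{\:p}$ is a vector space, the real and imaginary parts of $b$ each lie in $\textnormal{BDA}_r^{\:p}$. Hence it suffices to show that a \emph{real-valued} $b\in\textnormal{BDA}_r^{\:p}$ lies in $\textnormal{BMO}_r^{\:p}$. For real $b$ with near-optimal holomorphic approximant $h$ one has $\textnormal{Im}(h)=\textnormal{Im}(h-b)$, so $|\textnormal{Im}(h)|\le|b-h|$, and a single application of Lemma \ref{PolydiskImBound} to $h$ gives
\[
\int_P |h-\overline{h(z)}|^p\,dV \le C_p \int_P |\textnormal{Im}(h)|^p\,dV \le C_p \int_{E(z,r)} |b-h|^p\,dV \lesssim \|b\|_{\textnormal{BDA}_r^{\:p}}^p\,|E(z,r)|,
\]
after which $|b-\overline{h(z)}|\le |b-h|+|h-\overline{h(z)}|$ finishes the job. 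Your two-function apparatus with $f=(h+\bar g)/2$ becomes unnecessary once this reduction is made.
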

			\begin{proof} It is clear by definition that $\textnormal{BMO}_r^{\:p} \subseteq \text{BDA}_r^{\:p} \cap \overline{\text{BDA}_r^{\:p}}$ (just take $h$ to be the constant function $\langle b \rangle_{E(z,r)}$ on each Kobayashi ball). So it suffices to prove the reverse inclusion. 
				
				Because $\text{BDA}_r^{\:p}$ is a vector space, it is enough to show that if $b \in \text{BDA}_r^{\:p}$ is real-valued, then $b \in \textnormal{BMO}_r^{\:p}.$ From Lemma \ref{PolyContain}, we know that any Kobayashi ball $E(z,r)$ contains a polydisk $P(z,a(r))$. 
				We also know from the same lemma that as long as $r$ is sufficiently large, then there exists an $r'>0$ so that $E(z,r') \subset P(z,a(r))$ for all $z \in D.$ By Theorem \ref{BMOequivalence}, it suffices to show that $b \in \textnormal{BMO}_{r'}^{\:p}.$ Choose $h$ holomorphic so that for $z \in D$,  $\frac{1}{|E(z,r)|} \int_{E(z,r)} |b-h|^p \, dV \leq 2 \|b\|^p_{\text{BDA}_r^{\:p}}.$ By a standard argument, it is enough to show that $$\sup_{z \in D} \frac{1}{|E(z,r')|} \int_{E(z,r')} |b-\overline{h(z)}|^p \, dV < \infty.$$
				
				By the comparability of polydisks, for each  $z$ the expression inside the supremum is dominated by an independent constant times
				\begin{align*}
					&  \frac{1}{|P(z,a(r))|} \int_{P(z,a(r))} |b-\overline{h(z)}|^p \, dV\\
					 \leq& \frac{1}{|P(z,a(r))|} \int_{P(z,a(r))} |b-h|^p \, dV + \frac{1}{|P(z,a(r))|} \int_{P(z,a(r))} |h-\overline{h(z)}|^p \, dV\\ 
				 \lesssim& \frac{1}{|E(z,r)|} \int_{E(z,r)} |b-h|^p \, dV + \frac{1}{|P(z,a(r))|} \int_{P(z,a(r))} |h-\overline{h(z)}|^p \, dV\\
					 \leq& 2 \|b\|^p_{\text{BDA}_r^{\:p}} + \frac{1}{|P(z,a(r))|} \int_{P(z,a(r))} |h-\overline{h(z)}|^pdV.
				\end{align*}
				
				By Lemma \ref{PolydiskImBound}, there exists a constant $C_p$, independent of $z$, so that 
				
				$$\frac{1}{|P(z,a(r))|} \int_{P(z,a(r))} |h-\overline{h(z)}|^p \,dV \leq C_p \frac{1}{|P(z,a(r))|} \int_{P(z,a(r))} |\text{Im}(h)|^p \,dV.$$ 
				
				Thus, we continue estimating:
				\begin{eqnarray*}
				&& 2 \|b\|^p_{\text{BDA}_r^{\:p}} + \frac{1}{|P(z,a(r))|} \int_{P(z,a(r))} |h-\overline{h(z)}|^p \,dV \\ && \leq 2 \|b\|^p_{\text{BDA}_r^{\:p}}+\frac{C_p}{|P(z,a(r))|} \int_{P(z,a(r))} |\text{Im}(h)|^p \,dV\\
				&&= 2 \|b\|^p_{\text{BDA}_r^{\:p}}+ \frac{C_p}{|P(z,a(r))|} \int_{P(z,a(r))} |\text{Im}(b-h)|^p \,dV\\
				&&\lesssim  2 \|b\|^p_{\text{BDA}_r^{\:p}}+\frac{1}{|E(z,r)|} \int_{E(z,r)} |b-h|^p \,dV \lesssim  \|b\|^p_{\text{BDA}_r^{\:p}}.
				\end{eqnarray*} 
			\end{proof}
			
The second ingredient that is needed to prove the necessity of Theorem \ref{CommutatorBoundedMain} is to construct a proper class of test functions: this turns out to be a collection of certain ``peaking'' functions on the domain $D$ that mimic the behavior of the normalized reproducing kernels in the case of a smooth boundary. We need the follwing proposition first. 

\begin{prop}\label{PeakingExistence}
Let $D$ be strongly pseudoconvex with $C^2$  boundary. Then there exists a small constant $\delta>0$ and a function $G(z,w) \in C^{1,\infty}(\overline{D} \times \overline{D})$ satisfying 
\begin{enumerate}
\item [(1).] For each fixed $z \in \overline{D},$ $G(z,\cdot)$ is holomorphic on $D.$
\item [(2).] There is a function $A(z,w) \in C^{1,\infty}(\overline{D} \times \overline{D})$ that is non-vanishing if $|z-w|<\delta$ so that $G(z,w)= A(z,w)g(z,w)$, where we recall the function $g(z, w)$ is defined in \eqref{20220924eq02}. 
\item [(3).] There holds $|G(z,w)| \gtrsim 1$ on the set $\{(z,w)\in \overline{D} \times \overline{D}:|z-w|\geq \delta\}.$ 
\end{enumerate}
\begin{proof} This is a straightforward adaptation of the argument in \cite{Ra1986}*{VII Proposition 3.1}. 
\end{proof}
\end{prop}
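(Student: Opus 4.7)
The approach will be an adaptation to the $C^2$-category of the classical construction of a globally-defined holomorphic support function on a strongly pseudoconvex domain (cf. Range, Chapter VII, Proposition 3.1). The key observation is that $g(z,w)$ is already holomorphic in $w$ on the set $\{\chi(z,w)=1\}$; so the task is to ``correct'' the non-holomorphicity of $g$ in $w$ outside this set, without destroying its vanishing behavior near the diagonal or its size estimates far from the diagonal.

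First I would establish that $g(z,w)$ is non-vanishing off the boundary diagonal. Near the diagonal, a second-order Taylor expansion of $\rho$ at $z$ combined with the strong pseudoconvexity of $\rho$ on $\overline D$ gives
$$
2\,\mathrm{Re}\,g(z,w) \;\geq\; |\rho(z)| + |\rho(w)| + c|z-w|^2 - C|z-w|^3,
$$
from which the size estimate \eqref{20200914eq01} for $|g|$ follows; away from the diagonal (where $\chi\equiv 0$) one has trivially $g(z,w) = -\rho(z) + |z-w|^2 \geq \mu/2$. Together these estimates force $g\neq 0$ on $\overline D\times \overline D$ off the boundary diagonal and already deliver property (3): $|g(z,w)|\gtrsim 1$ on $\{|z-w|\ge \delta\}$ for small $\delta>0$.

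Next, to produce $G$, I would form the $(0,1)$-form in $w$
$$
\phi(z,w) \;:=\; \frac{\overline\partial_w g(z,w)}{g(z,w)},
$$
which is $\overline\partial_w$-closed (tautologically, since $\phi = \overline\partial_w \log g$ locally), is supported in the annular region where $\chi(z,\cdot)$ transitions from $1$ to $0$ (because $\overline\partial_w g \equiv 0$ where $\chi\equiv 1$), and is uniformly bounded in $(z,w)$ (since $|g|$ is bounded below on that region). By the $\overline\partial$-solvability on strongly pseudoconvex $C^2$ domains --- using, for instance, the integral solution operator built from $g_\varepsilon$ as in \cite{LS2012} --- I would obtain $v(z,w)$, jointly smooth enough on $\overline D\times \overline D$, satisfying $\overline\partial_w v(z,\cdot) = \phi(z,\cdot)$ on $D$. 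Setting $A(z,w):=e^{-v(z,w)}$ and $G(z,w):=A(z,w)\,g(z,w)$, a direct computation gives
$$
\overline\partial_w G \;=\; e^{-v}\bigl(\overline\partial_w g - g\,\overline\partial_w v\bigr) \;=\; e^{-v}\bigl(\overline\partial_w g - g\,\phi\bigr) \;=\; 0,
$$
yielding (1); the factorization (2) holds by construction with $A$ manifestly non-vanishing everywhere; and (3) follows from the lower bound on $|g|$ combined with the boundedness of $v$.

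The main obstacle is arranging the parameter-dependent $\overline\partial$-solution $v(z,w)$ to be jointly regular enough to give the claimed $C^{1,\infty}$ smoothness of $A$: the classical Henkin--Grauert--Lieb solutions demand $C^4$ boundary, so in the $C^2$ setting one must instead appeal to the Kerzman--Stein--Ligocka apparatus developed in \cite{LS2012} (equivalently, solve the $\overline\partial$-equation using the smoothed Levi data $P^\varepsilon_z(w)$ in place of $P_z(w)$, and then show the resulting correction factor is compatible with the original $g$). Once this parameter-dependent $\overline\partial$-solution is in hand, all the remaining verifications reduce to the elementary size estimates recorded in the first step.
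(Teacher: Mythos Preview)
Your proposal is correct and follows precisely the approach the paper invokes (Range, VII Proposition 3.1): factor out the non-holomorphicity of $g(z,\cdot)$ by solving $\overline\partial_w v = \overline\partial_w g/g$ and setting $G = e^{-v}g$. One small imprecision: $\phi$ is supported on all of $\{\chi\neq 1\}$, not just the transition annulus (since $\overline\partial_w|z-w|^2\neq 0$ where $\chi\equiv 0$), but $|g|$ is bounded below on that entire set so the argument goes through unchanged; your identification of the parameter-dependent $\overline\partial$-regularity as the only genuine obstacle in the $C^2$ category is exactly right.
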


We define the following analogues of $p$-normalized reproducing kernels (see \cite{Li1994}):
\begin{defin} \label{20220924defn01}
For $z,w \in D$ and $G(z,w)$ as above, let  
$$S_{z,p}(w)= \frac{|\rho(z)|^{\frac{(n+1)}{p'}}}{|G(z,w)|^{n+1}}, \quad 1<p<\infty.$$
\end{defin}

When $p=2$, we omit the second subscript and simply write $S_z.$ For convenience, we introduce a ``fully normalized" version of the kernel functions when $p=2$:
\begin{equation} \label{modifiedBerezin} 
s_z(w)= \frac{S_z(w)}{\|S_z(\cdot) \|_{L^2(D)}}.
\end{equation}

\begin{prop} \label{LpSizePeaking}
For $z \in D$, there holds $\|S_{z,p}\|_{L^p(D)} \approx 1,$ where the implicit constants are independent of $z$ (but may depend on $p$). 
\begin{proof}
For the upper estimate, we first split the integral in the obvious manner, and then apply \eqref{20200914eq01}, Proposition \ref{RudinForelli} and Proposition \ref{PeakingExistence}  to see that  
\begin{align*}
\|S_{z,p}\|_{L^p}^p & = \int_{D} \frac{|\rho(z)|^{(n+1)(p-1)}}{|G(z,w)|^{p(n+1)}}
 \, dV(w)\\
 & \lesssim \int_{\{w:|z-w|<\delta\}} \frac{|\rho(z)|^{(n+1)(p-1)}}{F(z,w)^{p(n+1)}}
 \, dV(w)+ \int_{\{w:|z-w|\geq \delta\}} |\rho(z)|^{(n+1)(p-1)} \, dV(w) \\
 &\lesssim 1. 
\end{align*} 

For the second estimate, we first claim that for any $r>0$, there exists a constant $C_r$ depending only on $r$ so that for all $w \in E(z,r),$ we have the estimate $F(z,w) \leq C_r |\rho(z)|.$ Assuming the claim, we then have:
\begin{align*}
\|S_{z,p}\|_{L^p}^p & \geq \int_{E(z,r)} \frac{|\rho(z)|^{(n+1)(p-1)}}{|G(z,w)|^{p(n+1)}}
 \, dV(w)\\
 & \geq C_r^{-(n+1)(p-1)} |E(z,r)| |\rho(z)|^{-(n+1)}  \geq C_{r,p}
\end{align*}
where in the last step we used Corollary \ref{VolumeKobayashiBalls}.  

Finally, the claim can be proven using Theorem \ref{BBTheorem} and Lemma \ref{BallBox}. We leave the details to the interested reader.
\end{proof}
\end{prop}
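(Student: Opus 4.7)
The plan is to handle the two bounds separately, both by splitting or restricting the domain of integration in a way that exploits what we already know about $G$ and $F$. The building blocks in place are: the factorization $G(z,w)=A(z,w)g(z,w)$ with $|A|\approx 1$ near the diagonal and $|G|\gtrsim 1$ off-diagonal from Proposition \ref{PeakingExistence}; the comparison $|g(z,w)|\approx F(z,w)$ from \eqref{20200914eq01}; and the Rudin-Forelli type bound of Proposition \ref{RudinForelli}.

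For the upper bound I would split $D$ into the near-diagonal region $\{w: |z-w|<\delta\}$ and its complement. On the near-diagonal piece, $|G(z,w)|\approx F(z,w)$, so the contribution is bounded by
\[
|\rho(z)|^{(n+1)(p-1)}\int_{D}\frac{dV(w)}{F(z,w)^{p(n+1)}} \lesssim |\rho(z)|^{(n+1)(p-1)}\cdot|\rho(z)|^{-(n+1)(p-1)}=1,
\]
by Proposition \ref{RudinForelli} applied with $a=0$ and $b=(n+1)(p-1)>0$. On the far-diagonal piece, $|G(z,w)|\gtrsim 1$ makes the integrand pointwise bounded, and since $|\rho(z)|$ is bounded on $\overline D$ the contribution is trivially $O(1)$.

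For the lower bound I would restrict the integral to a single Kobayashi ball $E(z,r)$ with a fixed, moderate $r>0$. The key pointwise claim is: there is a constant $C_r$ such that $F(z,w)\leq C_r|\rho(z)|$ for every $w\in E(z,r)$. Once this is known, $|G(z,w)|\lesssim |\rho(z)|$ on $E(z,r)$ (after shrinking $r$ so $E(z,r)\subset\{|z-w|<\delta\}$ when $z$ is near $bD$; for $z$ staying in a compact subset of $D$ everything is uniformly bounded above and below by continuity and compactness), and so
\[
\|S_{z,p}\|_{L^p}^{p}\geq\int_{E(z,r)}\frac{|\rho(z)|^{(n+1)(p-1)}}{|G(z,w)|^{p(n+1)}}\,dV(w)\gtrsim |E(z,r)|\,|\rho(z)|^{-(n+1)}\approx 1,
\]
where in the last step I use Corollary \ref{VolumeKobayashiBalls} to evaluate $|E(z,r)|\approx \delta(z)^{n+1}$.

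The main technical point is the pointwise comparison $F(z,w)\lesssim |\rho(z)|$ on $E(z,r)$, and this is where I expect the effort to lie. To establish it I would invoke Theorem \ref{BBTheorem} to get $|\rho(w)|\approx|\rho(z)|$, then use Lemma \ref{PolyContain} to place $w$ inside the anisotropic polydisk $P(z,A(r)\delta(z))$. This immediately gives $|z-w|^2\lesssim \delta(z)$, and, writing $w-z$ in the normal/tangential splitting at $z$, it also lets one bound the Levi-type term $|\langle\overline\partial\rho(w),w-z\rangle|$ by $O(\delta(z))$ (the tangential part of $\overline\partial\rho(w)$ vanishes at $z$ to first order, so its pairing with the tangential part of $w-z$ is controlled by the product of two $\delta(z)^{1/2}$ quantities). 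All three summands in $F(z,w)$ are then $\lesssim|\rho(z)|$, completing the claim and thus the lower bound.
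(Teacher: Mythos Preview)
Your proposal is correct and follows essentially the same route as the paper: the same near/far split for the upper bound, and for the lower bound the same restriction to a Kobayashi ball together with the same key claim $F(z,w)\le C_r|\rho(z)|$ on $E(z,r)$, finished off with Corollary \ref{VolumeKobayashiBalls}. The paper cites Theorem \ref{BBTheorem} and Lemma \ref{BallBox} for the claim and leaves the details to the reader; you instead invoke Lemma \ref{PolyContain} (itself a consequence of those two results) and spell out the normal/tangential bookkeeping, which is a perfectly acceptable way to carry out what the paper omits.
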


\begin{cor}\label{RecipMeasureBall}
For each $r>0$, there exists a constant $C_r>0$ such that $|s_z(w)|^2 \geq \frac{C_r}{|E(z,r)|}$ for $w \in E(z,r).$

\end{cor}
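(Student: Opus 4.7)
The plan is to use the size estimate $\|S_z\|_{L^2(D)} \approx 1$ from Proposition \ref{LpSizePeaking} (applied with $p=2$), together with the volume asymptotics $|E(z,r)| \approx C(r)|\rho(z)|^{n+1}$ from Corollary \ref{VolumeKobayashiBalls}, to reduce the desired pointwise lower bound to an upper bound on $|G(z,w)|$ in terms of $|\rho(z)|$. Indeed, from \eqref{modifiedBerezin} one gets $|s_z(w)|^2 \approx |S_z(w)|^2 = |\rho(z)|^{n+1}/|G(z,w)|^{2(n+1)}$, so the desired inequality is equivalent to
$$
|G(z,w)| \leq M_r\, |\rho(z)| \quad \text{for all } z \in D,\ w \in E(z,r),
$$
with $M_r$ depending only on $r$ (and $D$).

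The upper bound on $|G(z,w)|$ itself would follow from the claim tucked into the end of the proof of Proposition \ref{LpSizePeaking}, namely that $F(z,w) \leq C_r |\rho(z)|$ whenever $w \in E(z,r)$ --- an estimate coming from Theorem \ref{BBTheorem} combined with Lemma \ref{BallBox}. Taking this for granted, I would split on the parameter $\delta$ furnished by Proposition \ref{PeakingExistence}. When $|z-w|<\delta$, part~(2) of that proposition gives $|G(z,w)| \approx |g(z,w)|$, which by \eqref{20200914eq01} is comparable to $F(z,w)$; the claim then yields $|G(z,w)| \lesssim F(z,w) \lesssim |\rho(z)|$ directly. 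When $|z-w|\geq \delta$ and still $w \in E(z,r)$, the chain $|z-w|^2 \leq F(z,w) \leq C_r|\rho(z)|$ forces $|\rho(z)| \geq \delta^2/C_r$, a positive constant depending only on $r$; since $G$ is continuous on the compact set $\overline D \times \overline D$, $|G(z,w)|$ is uniformly bounded above, and we may absorb this case into the bound by enlarging $M_r$ if necessary.

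I do not anticipate any real obstacle here, since the substantive work has already been done in establishing the $L^2$ normalization of $S_{z,p}$ and the Euclidean-size asymptotics of Kobayashi balls. The only mildly delicate point is the region where $|z-w|$ is bounded away from zero, since there the asymptotic $|G|\approx F$ is not available; but the observation that membership of $w$ in $E(z,r)$ with $|z-w|$ bounded below automatically forces $z$ to sit at a definite distance from $bD$ handles this case cleanly. Combining the two regimes with the reduction above yields the claim with $C_r$ depending only on $r$ and $D$.
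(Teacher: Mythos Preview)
Your argument is correct and follows exactly the route the paper intends: the corollary is stated without proof because it is an immediate consequence of the ingredients already assembled in the proof of Proposition~\ref{LpSizePeaking} (the claim $F(z,w)\le C_r|\rho(z)|$ on $E(z,r)$, the relation $|G|\approx F$ near the diagonal, and Corollary~\ref{VolumeKobayashiBalls}), combined with the normalization $\|S_z\|_{L^2}\approx 1$. Your treatment of the off-diagonal regime $|z-w|\ge\delta$ is in fact a bit more explicit than what the paper writes out, but the idea is the same.
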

		
We are ready to prove the necessity of Theorem \ref{CommutatorBoundedMain}. 
			
		\begin{thm}\label{CommutatorBoundedNecessity} Suppose $q \geq 2, r>0$, $b \in L^q(D)$ and $[b,P]$ is bounded on $L^p(D)$ for $q'\leq p \leq q.$ Then $b \in \textnormal{BMO}_{r}^{\:q}.$
			
			\begin{proof}
				
				First, note that $H_b=[b,P]P$ and $H_{\overline{b}}^*=-P[b,P],$ so if $[b,P]$ is bounded on $L^p(D)$ for $q' \leq p \leq q,$ then so are $H_b$ and $H_{\overline{b}}$. We will show that the boundedness of $H_b$ implies that $b \in \text{BDA}_r^{\:q}$ for all $r>0.$ The analogous statement about $H_{\overline{b}}$ will follow by the same argument. Then we will conclude that $b \in \text{BDA}_r^{\:q} \cap \overline{\text{BDA}_r^{\:q}}$ for all $r$, and therefore by Theorem \ref{BDA BMO Rel}, $b \in \textnormal{BMO}_{r}^{\:q}$  as we sought to show. 
				
				Therefore, suppose that $H_b$ is bounded on $L^q(D).$ Using Proposition \ref{LpSizePeaking}, it is clear that $\|H_b(S_{z,q})\|_{L^q(D)} \lesssim \|H_b\|_{L^q(D) \rightarrow L^q(D)}.$ On the other hand, by Corollary \ref{RecipMeasureBall} and using the fact that $S_{z,q}$ is holomorphic we have, for any $r>0$:
				\begin{align*}
					\|H_b(S_{z,q})\|_{L^q(D)}^q & = \left\|S_{z,q} \left(b-\frac{1}{S_{z,q}}P(S_{z,q} b) \right) \right\|_{L^q(D)}^q\\
					& = \int_{D} |S_{z,q}|^q \left|b-\frac{1}{S_{z,q}}P(S_{z,q} b) \right|^q \, dV\\
					& \gtrsim \frac{1}{|E(z,r)|} \int_{E(z,r)} \left|b-\frac{1}{S_{z,q}}P(S_{z,q} b)\right|^q \, dV\\
					& \geq b_{r,q}(z).
				\end{align*} 
				This proves that $b \in \text{BDA}_r^{\:q},$ completing the proof. 
			\end{proof}
		\end{thm}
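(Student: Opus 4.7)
The plan is to reduce the statement to a $\textnormal{BDA}$ estimate via Theorem~\ref{BDA BMO Rel}, and then extract such an estimate from the Hankel operators $H_b$ and $H_{\bar b}$ by testing against the peaking family $\{S_{z,q}\}$ introduced in Definition~\ref{20220924defn01}. More precisely, since $H_b=[b,P]P$ and $H_{\bar b}^{\ast}=-P[b,P]$, the assumption that $[b,P]$ is bounded on $L^{p}(D)$ for $q'\le p\le q$ immediately yields boundedness of $H_b$ and $H_{\bar b}$ on $L^{q}(D)$ (by duality for $H_{\bar b}$ one needs the exponent $q'$). Therefore, granting Theorem~\ref{BDA BMO Rel}, it is enough to establish the two inclusions $b\in \textnormal{BDA}_{r}^{\:q}$ and $\bar b\in \textnormal{BDA}_{r}^{\:q}$; by symmetry in the roles of $b$ and $\bar b$, I will only describe the first.

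For a fixed $z\in D$, I would apply $H_b$ to the function $S_{z,q}$. Because $S_{z,q}$ is holomorphic, we have $PS_{z,q}=S_{z,q}$ and hence
\begin{equation*}
H_{b}(S_{z,q})\;=\;bS_{z,q}-P(bS_{z,q})\;=\;S_{z,q}\Bigl(b-\tfrac{1}{S_{z,q}}P(bS_{z,q})\Bigr).
\end{equation*}
Proposition~\ref{LpSizePeaking} gives the uniform normalization $\|S_{z,q}\|_{L^{q}(D)}\approx 1$, so the assumed $L^{q}$-boundedness of $H_b$ produces the upper bound $\|H_b(S_{z,q})\|_{L^{q}(D)}\lesssim \|H_b\|_{L^q\to L^q}$ with a constant independent of $z$. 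The key holomorphic object here is the candidate $h_z:=S_{z,q}^{-1}P(bS_{z,q})$, which is a legitimate competitor in the infimum defining $b_{r,q}(z)$.

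For the matching lower bound I would restrict the $L^{q}$-integral defining $\|H_b(S_{z,q})\|_{L^q(D)}^{\,q}$ to the Kobayashi ball $E(z,r)$ and invoke Corollary~\ref{RecipMeasureBall} (the pointwise lower bound $|S_{z,q}|^{q}\gtrsim |E(z,r)|^{-1}$ on $E(z,r)$, obtained from the normalized kernel $s_z$ after rescaling the exponent). This yields
\begin{equation*}
\|H_{b}(S_{z,q})\|_{L^{q}(D)}^{\,q}\;\gtrsim\;\frac{1}{|E(z,r)|}\int_{E(z,r)}|b-h_z|^{q}\,dV\;\geq\;b_{r,q}(z),
\end{equation*}
uniformly in $z$. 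Taking the supremum over $z\in D$ gives $b\in \textnormal{BDA}_{r}^{\:q}$. Running the same argument with $\bar b$ in place of $b$ produces $b\in \overline{\textnormal{BDA}_{r}^{\:q}}$, and Theorem~\ref{BDA BMO Rel} then concludes $b\in \textnormal{BMO}_{r}^{\:q}$.

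The main delicate point in this plan is the lower bound for $|S_{z,q}|$ on Kobayashi balls: one needs the analogue of Corollary~\ref{RecipMeasureBall} for the exponent $q$ rather than $p=2$, which in turn relies on the size estimate $|G(z,w)|\approx F(z,w)$ near the diagonal (via Proposition~\ref{PeakingExistence} and \eqref{20200914eq01}) together with the fact that $F(z,w)\lesssim |\rho(z)|$ uniformly on $E(z,r)$; this is where the minimal $C^{2}$ regularity is used and where one must be most careful with dependencies on $r$ and $q$. Everything else is a clean consequence of the preceding lemmas and of Theorem~\ref{BDA BMO Rel}.
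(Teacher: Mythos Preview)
Your proposal is correct and follows essentially the same route as the paper: reduce to $H_b$ and $H_{\bar b}$ via the identities $H_b=[b,P]P$ and $H_{\bar b}^{*}=-P[b,P]$, test $H_b$ on the peaking functions $S_{z,q}$, use Proposition~\ref{LpSizePeaking} for the upper bound and the pointwise lower bound on $|S_{z,q}|^q$ over $E(z,r)$ for the lower bound, and close with Theorem~\ref{BDA BMO Rel}. Your remark that the cited Corollary~\ref{RecipMeasureBall} is literally stated for $p=2$ and that one really needs the $q$-analogue is well taken; the paper invokes it without comment, but as you note the required estimate $|S_{z,q}(w)|^q\gtrsim |E(z,r)|^{-1}$ on $E(z,r)$ follows from the same ingredients (Proposition~\ref{PeakingExistence}, the bound $F(z,w)\lesssim |\rho(z)|$ on $E(z,r)$, and Corollary~\ref{VolumeKobayashiBalls}).
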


\begin{rem}
As a byproduct of the proof of the above theorem, we see that if $b \in L^p(D)$ and $H_b$ is bounded on $L^p(D)$, then $b \in \textnormal{BDA}_r^{\:p}.$
\end{rem}
	
	\section{Compactness of $[b, P]$: Proof of Theorem \ref{CommutatorCompactMain}} \label{CompactSection}
	
	We now turn to the question of compactness of $[b, P]$, whose treatment is closely related to the one of boundedness. Recall that such characterization will involve an appropriate VMO condition. 

  \subsection{Proof of Theorem \ref{CommutatorCompactMain}: sufficiency} Our strategy is again to show the compactness of $[b, T_\varepsilon ]$ first. Recall that the multiplication operator with symbol $b$ is given by $M_b f(z):=b(z)f(z)$, where $f$ is any measurable function. We begin with the following lemma. 
  
  \begin{lemma}\label{CompactMultiplication}
		If $1<p<\infty$, $b \in \textnormal{VMO}_{\beta}^{\:p}$, and $\ell \in \{1,\dots, N\},$ then the operators $M_{b-\widehat{b}_{\ell}} T_\varepsilon$ and $M_{b-\widehat{b}_{\ell}} T_\varepsilon^*$ are compact on $L^p(D)$. 
		\begin{proof}
			We prove the compactness of $M_{b-\widehat{b}_{\ell}} T_\varepsilon$; the proof for $M_{b-\widehat{b}_{\ell}} T_\varepsilon^*$ is entirely similar. Take a sequence $\{f_j\}$ converging weakly to $0$ in $L^p(D);$ we will show that the sequence of functions $\{ M_{b-\widehat{b}_{\ell}} T_\varepsilon f_j \}$ converges to $0$ strongly in $L^p(D).$

			Let $\varepsilon>0$. By our hypothesis that $b \in \textnormal{VMO}_{\beta}^{\:p}$, we can choose $N_1$ large enough so that for any Kobayashi ball $E(c_j^k, \beta)$ with $k \geq N_1,$ we have $\left(\frac{1}{|E(c_j^k,\beta)|} \int_{E(c_j^k,\beta)}|b-\langle b \rangle_{E(c_j^k,\beta)}|^p \, dV \right)<\varepsilon.$ Moreover, there exists a compact set $X \subset D$ so that for $k < N_1,$ $E(c_j^k, \widetilde{\beta}) \subset X.$ Let $M= \sup_{\substack{z \in X \\ w \in D}} F(z,w)^{-(n+1)} $ and note that $M$ is finite and depends only on $N_1.$ 
			
			Then we have, for $j \geq N_1$:
			\begin{align*}
				\int_{D} |M_{b-\widehat{b}_{\ell}} T_\varepsilon f_j|^p \, dV & =\int_{D} \left| \int_{D} T_\varepsilon(z,w) f_j(w) \, dV(w) \right|^p |b(z)-\widehat{b}_{\ell}(z)|^p \, dV(z) \\
				& \leq \int_{X} \left| \int_{D} T_\varepsilon(z,w) f_j(w) \, dV(w) \right|^p |b(z)-\widehat{b}_{\ell}(z)|^p \, dV(z)\\
				& +  \int_{\bigcup_{k \geq N_1} K_j^k} \left| \int_{D} T_\varepsilon(z,w) f_j(w) \, dV(w) \right|^p |b(z)-\widehat{b}_{\ell}(z)|^p \, dV(z).
			\end{align*}
			For the first term, note that by $|T_\varepsilon(z,w)| \lesssim F(z,w)^{-(n+1)}$, one can conclude that for each fixed $z \in X$, the function $T_\varepsilon(z,\cdot) \in L^{p'}(D).$  Therefore, by weak convergence (which also implies $\sup_{j} \|f_j\|_{L^p(D)} < \infty$), for each fixed $z \in X$, we have $\int_{D} T_\varepsilon(z,w) f_j(w) \, dV(w) \rightarrow 0$ as $j \rightarrow \infty.$ Then, the dominated convergence theorem implies that 
   $$
   \int_{X} \left| \int_{D} T_\varepsilon(z,w) f_j(w) \, dV(w) \right|^p |b(z)-\widehat{b}_{\ell}(z)|^p \, dV \rightarrow 0
   $$
			as $j \rightarrow \infty.$ Therefore, we may choose $N_2 \geq N_1$ so that if $j> N_2,$ then 
			$$\int_{X} \left| \int_{D} T_\varepsilon(z,w) f_j(w) \, dV(w) \right|^p |b(z)-\widehat{b}_{\ell}(z)|^p \, dV < \varepsilon.$$
			
			It remains to show the second term is controlled by (a constant multiple of) $\varepsilon.$ For this, use the reasoning in the proof of Theorem \ref{Kerzman main} to initially estimate the integral as follows:
			\begin{align*}
				& \int_{\bigcup_{k \geq N_1} K_j^k} \left| \int_{D} T_\varepsilon(z,w) f_j(w) \, dV(w) \right|^p |b(z)-\widehat{b}_{\ell}(z)|^p \, dV(z) \\
				& \lesssim \int_{D} \left( \int_{\bigcup_{k \geq N_1} K_j^k}|b(z)-\widehat{b}_{\ell}(z)|^{p} |\rho(z)|^{-\varepsilon' p}  |F(w,z)|^{-(n+1)} \, dV(z) \right)  |f_j(w)|^{p} |\rho(w)|^{\varepsilon'p} \, dV(w),
			\end{align*}
			where $\varepsilon'$ is an appropriately chosen small constant.
			
			Then a straightforward modification of the argument in Lemma \ref{SchurWithBMO} shows that the inner integral is dominated by a constant multiple of $\varepsilon |\rho(w)|^{-\varepsilon' p}$, so altogether the above display is dominated by $ \varepsilon \sup_{j} \|f_j\|_{L^p}^p.$ Altogether, we have, for $j > N_2$, 
			$$ 
			\int_{D} |M_{b-\widehat{b}_{\ell}} T_\varepsilon f_j|^p \, dV \lesssim  \varepsilon, 
		    $$
		    which implies that $M_{b-\widehat{b}_{\ell}} T_\varepsilon f_j \rightarrow 0$ strongly in $L^p.$ This proves that the operator $M_{b-\widehat{b}_{\ell}}T_\varepsilon$ is compact on $L^p(D).$ 
		\end{proof}
		
	\end{lemma}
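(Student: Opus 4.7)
The plan is to prove $M_{b-\widehat{b}_\ell}T_\varepsilon$ is compact by showing that any sequence $f_j \rightharpoonup 0$ weakly in $L^p(D)$ is sent to a norm-null sequence. The VMO hypothesis will be exploited through the dyadic structure of $D$: for kubes $K_j^k$ deep near the boundary the local mean oscillations of $b$ become uniformly small, while only finitely many ``shallow'' kubes sit inside a compact set $X \subset D$ on which a weak-to-strong convergence argument for $T_\varepsilon$ is available. Concretely, given $\eta > 0$, I would use $b \in \textnormal{VMO}_\beta^{\:p}$ to choose $N_1$ so large that every kube $K_j^k$ with $k \geq N_1$ satisfies $\frac{1}{|E(c_j^k,\beta)|}\int_{E(c_j^k,\beta)}|b-\langle b\rangle_{E(c_j^k,\beta)}|^p\,dV < \eta^p$, and let $X$ be the compact union of the finitely many kubes with $k < N_1$. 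Then split $\int_D |M_{b-\widehat{b}_\ell}T_\varepsilon f_j|^p\,dV$ as an integral over $X$ plus an integral over $\bigcup_{k \geq N_1}K_j^k$.

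For the integral over $X$: since $z \in X$ is bounded away from $bD$, the pointwise bound $|T_\varepsilon(z,w)| \lesssim F(z,w)^{-(n+1)}$ yields a uniform $L^\infty$ (hence $L^{p'}$) control on $T_\varepsilon(z,\cdot)$ for $z \in X$. Weak convergence $f_j \rightharpoonup 0$ then gives pointwise $T_\varepsilon f_j(z) \to 0$ on $X$, with a uniform pointwise bound coming from $\sup_j \|f_j\|_{L^p}<\infty$. Since $|b-\widehat{b}_\ell|^p$ is integrable on the compact set $X$, dominated convergence drives the $X$-piece to $0$ as $j \to \infty$.

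For the integral over $\bigcup_{k\geq N_1} K_j^k$, I would replicate the Schur-type estimate from the proof of Theorem \ref{Kerzman main}: apply H\"older's inequality inside $T_\varepsilon f_j(z)$ with weights $|\rho(w)|^{\pm\alpha p}$ for a small $\alpha > 0$ and invoke Proposition \ref{RudinForelli} to deduce $|T_\varepsilon f_j(z)|^p \lesssim |\rho(z)|^{-\alpha p}\int_D F(z,w)^{-(n+1)}|\rho(w)|^{\alpha p}|f_j(w)|^p\,dV(w)$. After multiplication by $|b(z)-\widehat{b}_\ell(z)|^p$, integration in $z$ over $\bigcup_{k \geq N_1}K_j^k$, and Fubini, the inner $z$-integral is controlled by a \emph{localized} variant of Lemma \ref{SchurWithBMO} in which the sum runs only over deep kubes; since each such kube contributes local mean oscillation at most $\eta$, this localized sum is bounded by $\eta^p |\rho(w)|^{-\alpha p}$, and the outer $w$-integration then yields $\lesssim \eta^p\sup_j \|f_j\|_{L^p(D)}^p$. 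Combining the two pieces gives $\|M_{b-\widehat{b}_\ell}T_\varepsilon f_j\|_{L^p(D)}^p \lesssim \eta^p$ for $j$ large, and sending $\eta \to 0$ proves compactness. The operator $M_{b-\widehat{b}_\ell}T_\varepsilon^*$ admits an identical argument since $T_\varepsilon^*(z,w)$ satisfies the same size bound as $T_\varepsilon(z,w)$. The main technical obstacle is the \emph{localized} form of Lemma \ref{SchurWithBMO}: one must verify carefully that restricting the sum over kubes to those deep near the boundary allows replacement of the global BMO constant by the tail supremum of local oscillations, which by the VMO hypothesis is $<\eta$.
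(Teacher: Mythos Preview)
Your proposal is correct and follows essentially the same approach as the paper's proof: the same weak-to-strong criterion, the same split into a compact ``shallow'' set $X$ handled by dominated convergence and a ``deep'' region $\bigcup_{k\ge N_1}K_j^k$ handled by the Schur-type estimate from Theorem~\ref{Kerzman main} together with a localized version of Lemma~\ref{SchurWithBMO}. The only cosmetic difference is that the paper takes $X$ to be a compact set containing the enlarged balls $E(c_j^k,\widetilde\beta)$ for $k<N_1$ rather than the bare union of shallow kubes, but this plays no role in the argument.
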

	
Next we need the following extension of Lemma \ref{averagegrowthrate}.
	
	\begin{lemma} \label{CompactDifferenceofAverages}
		There exists $C>0$ so that for each $b \in \textnormal{VMO}_{3 \beta}^{\:p}$, $R>0, $ and $\varepsilon>0,$ there exists a compact set $F \subset D$ (depending on $R$ and $\varepsilon$) so that if $z, w \in F^c$ with $d_K(z,w)<R$, then $|\widehat{b}_{\ell}(z)-\widehat{b}_{\ell}(w)| \leq C \varepsilon (1+d_K(z,w)).$
		\begin{proof}
			Choose $\delta_1$ small enough so that if $\delta(z)< \delta_1$, then 
			$$
			\left(\frac{1}{|E(z, 3 \beta)|} \int_{E(z, 3 \beta)}|b-\langle b \rangle_{E(z, 3 \beta)}|^p \, dV \right)^{1/p}< \varepsilon.
			$$ 
			Using \cite{BB2000}*{Corollary 1.3}, it is clear that we can choose $\delta_2<\delta_1$ sufficiently small so that if $\delta(z)< \delta_2$ and $\delta(w) \geq \delta_1,$ then $d_K(z,w)> 2 R.$ 
			
			Now, let $F=\{z \in D: \delta(z) \geq \delta_2\}.$ Suppose $z,w \in F^c$ and moreover $d_K(z,w)<R.$ Follow the proof of Proposition \ref{averagegrowthrate} and  let $\gamma:[0,1] \rightarrow D$ be $C^1$ path connecting the points $z$ and $w$ such that $\int_{0}^{1} F_K(\gamma(t), \gamma'(t)) \, dt< 2 d_K(z,w)<2 R.$ Then it is clear that for any point $\gamma(t)$ on the path, we must have $\delta(\gamma(t))< \delta_1,$ for otherwise we would have $d_K(z,\gamma(t))>2 R$, a contradiction. The result then follows from the same arguments as Proposition \ref{averagegrowthrate}. 
		\end{proof}
	\end{lemma}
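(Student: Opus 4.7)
The plan is to obtain this statement as a ``localization near the boundary'' of Proposition \ref{averagegrowthrate}(2). Recall that the BMO argument there controlled $|\widehat b_\ell(z)-\widehat b_\ell(w)|$ by summing the oscillation contributions $\|b\|_{\textnormal{BMO}^{\:p}_{3\beta}}$ along a chain of Kobayashi balls of radius $\approx \beta$ that follow an almost-geodesic from $z$ to $w$. For VMO, each link should instead contribute $\varepsilon$, provided every ball in the chain is sufficiently close to $bD$. So my whole job is to design $F$ so that the chain stays near $bD$.

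First I would use the VMO hypothesis to produce $\delta_1>0$ such that
\[
\left(\frac{1}{|E(\zeta,3\beta)|}\int_{E(\zeta,3\beta)}|b-\langle b\rangle_{E(\zeta,3\beta)}|^p\,dV\right)^{1/p}<\varepsilon\quad\text{whenever } -\rho(\zeta)<\delta_1,
\]
as allowed by Definition \ref{VMO-01}. Next, I would invoke Theorem \ref{BBTheorem} together with the explicit form of the function $k(z,w)$: as $\delta(z)\to 0$ with $\delta(w)$ bounded below, the Kobayashi distance $d_K(z,w)$ blows up like $-\tfrac{1}{2}\log\delta(z)$. Thus there is $\delta_2<\delta_1$ so small that $\delta(z)<\delta_2$ and $\delta(w)\ge\delta_1$ force $d_K(z,w)>2R$. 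Take $F:=\{z\in D:\delta(z)\ge\delta_2\}$, which is compact since $\rho\in C^2(\overline D)$.

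Now for $z,w\in F^c$ with $d_K(z,w)<R$, choose a $C^1$ near-geodesic $\gamma:[0,1]\to D$ joining them with $\int_0^1 F_K(\gamma(t),\gamma'(t))\,dt<2d_K(z,w)<2R$. For any $t\in[0,1]$ we have $d_K(z,\gamma(t))<2R$, so by the choice of $\delta_2$ we must have $\delta(\gamma(t))<\delta_1$; in particular every midpoint of the subdividing sequence $z_0=z,z_1,\dots,z_{2N}=w$ (chosen exactly as in Proposition \ref{averagegrowthrate}) has boundary distance $<\delta_1$, and the same holds for the kube-centers $c_j^k$ associated to these midpoints since those centers lie in a $\beta$-neighborhood in the Kobayashi metric (adjust $\delta_1$ at the very start to absorb this buffer). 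Then exactly the computation \eqref{diffBerezin} of Proposition \ref{averagegrowthrate}, but with the VMO bound $\varepsilon$ replacing $\|b\|_{\textnormal{BMO}^{\:p}_{3\beta}}$ on each link, gives $|\widehat b_\ell(z)-\widehat b_\ell(w)|\lesssim N\varepsilon\lesssim \varepsilon(1+d_K(z,w))$, as desired.

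The main obstacle is the second step: verifying that the geodesic between two points near $bD$ of controlled Kobayashi distance itself stays near $bD$. This is precisely where the Balogh--Bonk estimate is indispensable, because only the logarithmic blow-up of $k(z,w)$ in the normal direction lets us trade a bound on $d_K(z,w)$ for a uniform bound on $\delta(\gamma(t))$. A small technical point to verify is that the intermediate kube-centers $c_j^{k}$ attached to midpoints $z_j$ of the chain also lie in $\{\delta<\delta_1\}$; this is handled by shrinking $\delta_1$ once at the outset using Lemma \ref{comparablekobayashiball} and Theorem \ref{BBTheorem} (boundary distances along a Kobayashi ball of radius $\beta$ are comparable up to constants), which costs only a harmless factor in $\varepsilon$.
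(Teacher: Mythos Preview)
Your proposal is correct and follows essentially the same approach as the paper: choose $\delta_1$ via the VMO condition, use the Balogh--Bonk estimate (Theorem \ref{BBTheorem}) to pick $\delta_2<\delta_1$ forcing near-geodesics from points in $F^c=\{\delta<\delta_2\}$ to remain in $\{\delta<\delta_1\}$, and then rerun the chain argument of Proposition \ref{averagegrowthrate} with $\varepsilon$ in place of $\|b\|_{\textnormal{BMO}^{\:p}_{3\beta}}$ on each link. Your additional remark about ensuring the kube-centers $c_j^k$ also lie in $\{\delta<\delta_1\}$ (by shrinking $\delta_1$ once at the outset using Lemma \ref{comparablekobayashiball} and Theorem \ref{BBTheorem}) is a point the paper leaves implicit under ``the same arguments as Proposition \ref{averagegrowthrate}''.
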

	
	The following lemma concerns the $L^p$ behavior of the dyadic Berezin transforms that we will use in the proof of the next theorem. 
	
	\begin{lemma}\label{good lp behavior}
		Let $\ell \in \{1,\dots, N\},$ $p_0 \geq 1$ and $b \in \textnormal{BMO}_{3\beta}^{\:p_0}.$ Then the function $\widehat{b}_{\ell}$ is in $L^p$ for all $1 \leq p<\infty.$ Moreover, there holds $$
		\|\widehat{b}_{\ell}\|_{L^p(D)} \leq C_p (\|b\|_{\textnormal{BMO}_{3\beta}^{\: p_0}}+\|b\|_{L^{p_0}(D)}).
		$$
		
\begin{proof}
We directly estimate, using Proposition \ref{averagegrowthrate}:
\begin{align} \label{20220914eq93}
				\int_{D} |\widehat{b}_{\ell}(w)|^p \,dV(w)& = \sum_{K_j^k \in \mathcal{T}_{\ell}} \int_{K_j^k}|\widehat{b}_{\ell}(w)|^p \,dV(w) \nonumber \\
				& \leq C^p (\|b\|_{\textnormal{BMO}_{3\beta}^{\: p_0}}^p+\|b\|_{L^{p_0}(D)}^p) \sum_{K_j^k \in \mathcal{T}_{\ell}} k^p  \int_{K_j^k} 1 \, dV(w). 
    \end{align}
Note that 
$$
\sum_{K_j^k \in \mathcal{T}_{\ell}} k^p \int_{K_j^k} 1 \, dV(w)= \sum_{K_j^k \in \mathcal{T}_{\ell}} k^p |K_j^k| \lesssim   \sum_{K_j^k \in \mathcal{T}_{\ell}} k^p s^{-(2n+2)k} \leq C_p^p.
$$
This together with \eqref{20220914eq93} gives the desired result. 	
\end{proof}
\end{lemma}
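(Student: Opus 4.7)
The plan is to reduce the $L^p$ norm computation to a purely combinatorial/geometric sum over the dyadic system. Since $\widehat{b}_\ell$ is by construction constant on each kube $K_j^k$ (with value $\langle b\rangle_{E(c_j^k,\beta)}$), I would write
$$
\|\widehat{b}_\ell\|_{L^p(D)}^p \;=\; \sum_{K_j^k \in \mathcal{T}_\ell} |\langle b\rangle_{E(c_j^k,\beta)}|^p \, |K_j^k|,
$$
and then apply the pointwise bound in Proposition \ref{averagegrowthrate}(1), together with the elementary inequality $(a+b)^p \le 2^{p-1}(a^p+b^p)$, to dominate the summand by a constant multiple of
$$
\bigl(k^p\,\|b\|_{\textnormal{BMO}_{3\beta}^{\:p_0}}^p + \|b\|_{L^{p_0}(X)}^p\bigr)\,|K_j^k|,
$$
where $X\subset D$ is the compact set in Proposition \ref{averagegrowthrate}. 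Noting that $\|b\|_{L^{p_0}(X)}\le \|b\|_{L^{p_0}(D)}$, the estimate reduces to bounding the two sums $\sum_{K_j^k}|K_j^k|$ and $\sum_{K_j^k} k^p |K_j^k|$ independently of $b$.

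The first sum is simply $|D|<\infty$ since the kubes partition $D$ by Lemma \ref{DyaSystem}(1). For the second, I would use Lemma \ref{DyaSystem}(2) to write $|K_j^k|\approx s^{-(2n+2)k}\delta^{2n+2}$, group kubes by generation $k$, and count the kubes at each level: the number of boundary cells $Q_j^k$ is, by Lemma \ref{HK1}(4) combined with the Ball--Box bound $\sigma(\textnormal{Box}(\zeta,r))\approx r^{2n}$ (which follows from Lemma \ref{BallBox}), of order $s^{2nk}$, and the kubes at level $k$ are in bijection with those cells via \eqref{DyadicTents}. Consequently
$$
\sum_{K_j^k \in \mathcal{T}_\ell} k^p\,|K_j^k| \;\lesssim\; \sum_{k\ge 0} k^p \, s^{2nk}\, s^{-(2n+2)k} \;=\; \sum_{k\ge 0} k^p s^{-2k},
$$
which is a convergent series since $s>1$ is fixed. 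Combining these pieces yields the desired inequality
$$
\|\widehat{b}_\ell\|_{L^p(D)} \;\le\; C_p\bigl(\|b\|_{\textnormal{BMO}_{3\beta}^{\:p_0}} + \|b\|_{L^{p_0}(D)}\bigr).
$$

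I do not expect any serious obstacle here: the dyadic Berezin transform being piecewise constant makes the $L^p$ computation essentially combinatorial, and Proposition \ref{averagegrowthrate} does all of the analytic work by converting the BMO hypothesis into a growth rate that is merely linear in the generation depth $k$. The only point to treat with care is the bookkeeping of kubes per level, since the naive sum over all $(k,j)$ would fail to converge were it not for the geometric decay $s^{-2k}$ that remains after balancing the kube count $s^{2nk}$ against the kube volume $s^{-(2n+2)k}$.
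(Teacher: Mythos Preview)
Your proposal is correct and follows essentially the same route as the paper: decompose over kubes, invoke Proposition~\ref{averagegrowthrate}(1) for the pointwise bound, and then control $\sum_{K_j^k} k^p|K_j^k|$ via the volume estimate $|K_j^k|\approx s^{-(2n+2)k}$. You are in fact slightly more careful than the paper in two places: you separate the BMO and $L^{p_0}(X)$ contributions (so the harmless $k=0$ terms are handled cleanly), and you make the kube-count $\#\{j:Q_j^k\}\approx s^{2nk}$ explicit, whereas the paper leaves this implicit in the step $\sum_{K_j^k} k^p s^{-(2n+2)k}\le C_p^p$.
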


	Now let us turn to prove the compactness of $[b, T_{\varepsilon}]$. We remark that the proof is substantially different than what appears in \cite{Li1994} for Hankel operators; our approach instead follows from direct integral estimation.
	
	\begin{thm}\label{Kerzman main compact}
		Suppose that $q \geq 2$ and $b \in \textnormal{VMO}_{3\beta}^{\:q}$.   Then the operator $[b, T_\varepsilon]$ is compact on $L^p(D)$ for $q' \leq p \leq q.$

		\begin{proof}
			As before, pick a dyadic system $\mathcal{T}_{\ell}$ and write  $[b, T_\varepsilon]=[b-\widehat{b}_{\ell},T_\varepsilon]+[\widehat{b}_{\ell}, T_\varepsilon].$ The compactness of $[b-\widehat{b}_{\ell},T_\varepsilon]$ follows easily from Lemma \ref{CompactMultiplication} and a duality argument. It remains to show that the operator $[\widehat{b}_{\ell}, T_\varepsilon]$ is compact. 
			
		 Take a sequence $\{f_j\}$ in $L^p(D)$ that converges to $0$ weakly (this implies the sequence is bounded in norm, say by a constant $B$), and
		 choose $\varepsilon_0>0$ so that $|\rho|^{-(n+1)\max\{p,p'\}\varepsilon_0}$ is integrable on $D$. Thus, according to Lemma \ref{metric estimate} and noting the fact that the function $F(z,w)$ is bounded, there exists a constant $C_{\varepsilon_0}$ so that 
			$$
			d_K(z,w) \leq C_{\varepsilon_0} |\rho(z)|^{-\varepsilon_0(n+1)} |\rho(w)|^{-\varepsilon_0(n+1)}
			$$
			for all $z, w \in D.$
			
			Let $\varepsilon>0$ and choose $R$ sufficiently large so that $C(R)<\varepsilon,$ where $C(R)$ is the constant in Proposition \ref{RudinForelli}. Then choose a compact set $F_1 \subset D$ that satisfies the conclusion of Lemma \ref{CompactDifferenceofAverages} for this choice of $R$ and $\varepsilon$. Finally, let $F_2 \supset F_1$ be an additional compact set to be chosen later. It suffices to show that we can choose $j$ sufficiently large, such that 
			$$
			\|[\widehat{b}_{\ell}, T_\varepsilon] f_j \|_{L^p(D)}< C(D, b) \varepsilon,
			$$
			where $C(D, b)$ only depends on the underlying domain $D$ and symbol $b.$ We estimate the norm as follows:
			\begin{align*}
				& \|[\widehat{b}_{\ell}, T_\varepsilon] f_j \|_{L^p(D)}\\ & \lesssim \left( \int_{D} \left|\int_{F_2} T_\varepsilon(z,w) (\widehat{b}_{\ell}(z)-\widehat{b}_{\ell}(w))f_j(w) \, dV(w)\right|^p \,dV(z) \right)^{1/p}\stepcounter{equation}\tag{\theequation}\label{myeq3} \\
				&  \quad + \left( \int_{D} \left(\int_{D \setminus F_2} \frac{|\widehat{b}_{\ell}(z)-\widehat{b}_{\ell}(w)|}{F(z,w)^{n+1}}|f_j(w)| \, dV(w)\right)^p \,dV(z)\right)^{1/p} \stepcounter{equation}\tag{\theequation}\label{myeq4}. 
			\end{align*}
			We handle \eqref{myeq3} first. To begin with, note that for any $z \in D$, using properties of $F(z,w)$ and H\"{o}lder's inequality:
			\begin{align*}
				\left|\int_{F_2} T_\varepsilon(z,w) (\widehat{b}_{\ell}(z)-\widehat{b}_{\ell}(w))f_j(w) \, dV(w)\right|^p & \lesssim  \left(\int_{F_2} F(z,w)^{-(n+1)} |\widehat{b}_{\ell}(z)-\widehat{b}_{\ell}(w)| |f_j(w)| \, dV(w)\right)^p \\
				& \leq C_{F_2} B^p \left(\int_{D} |\widehat{b}_{\ell}(z)-\widehat{b}_{\ell}(w)|^{p'} \, dV(w)  \right)^{p/p'}.
			\end{align*}
			It is then easy to verify, using for example Lemma \ref{good lp behavior}, that the last display is integrable in $z$. Moreover, notice that for almost every $z \in D$, the function $T_\varepsilon(z,\cdot) (\widehat{b}_{\ell}(z)-\widehat{b}_{\ell}(\cdot))) \one_{F_2}(\cdot) \in L^{p'}(D)$. Thus, the weak convergence of $f_j$ implies that for almost every $z \in D$, $$\left|\int_{F_2} T_\varepsilon(z,w) (\widehat{b}_{\ell}(z)-\widehat{b}_{\ell}(w))f_j(w) \, dV(w)\right|^p \rightarrow 0 $$
			as $j \rightarrow \infty.$ Therefore, by dominated convergence, expression \eqref{myeq3} tends to $0$ as $j \rightarrow \infty,$ so we can choose $j$ sufficiently large so that it is less than $\varepsilon.$ 	
			
			Next, we handle \eqref{myeq4} and estimate as follows:
			\begin{align*}
				& \left( \int_{D} \left(\int_{D \setminus F_2} \frac{|\widehat{b}_{\ell}(z)-\widehat{b}_{\ell}(w)|}{F(z,w)^{n+1}}|f_j(w)| \,dV(w)\right)^p \,dV(z) \right)^{1/p}\\
				& \leq \left( \int_{F_1} \left(\int_{D \setminus F_2} \frac{|\widehat{b}_{\ell}(z)-\widehat{b}_{\ell}(w)|}{F(z,w)^{n+1}}|f_j(w)| \, dV(w) \right)^p \,dV(z) \right)^{1/p} \stepcounter{equation}\tag{\theequation}\label{myeq1}\\
				& \quad + \left( \int_{D \setminus F_1} \left(\int_{D \setminus F_2} \frac{|\widehat{b}_{\ell}(z)-\widehat{b}_{\ell}(w)|}{F(z,w)^{n+1}}|f_j(w)| \, dV(w) \right)^p\, dV(z) \right)^{1/p}.
				\stepcounter{equation}\tag{\theequation}\label{myeq2}
			\end{align*}
			
			Expression \eqref{myeq1} can be bounded above as follows
			$$C \left( \int_{F_1} \left(\int_{D \setminus F_2} \frac{(1+d_K(z,w))}{F(z,w)^{n+1}}|f_j(w)| \,dV(w)\right)^p \,dV(z) \right)^{1/p} $$
			and the proof of Theorem \ref{Kerzman main} together with the dominated convergence theorem shows that this expression can be made less than $\varepsilon$ with an appropriate choice of $F_2$ that depends on the function $b$ and fixed quantities $F_1$, $B$, $p$, $\varepsilon_0$ and $\varepsilon.$
	
			For expression \eqref{myeq2}, we once again split it into two pieces and use our choice of a compact set $F_1$:
			\begin{align*}
				& \left( \int_{D \setminus F_1} \left(\int_{D \setminus F_2} \frac{|\widehat{b}_{\ell}(z)-\widehat{b}_{\ell}(w)|}{F(z,w)^{n+1}}|f_j(w)|dV(w)\right)^p  dV(z) \right)^{1/p}\\
				& \leq \left( \int_{D \setminus F_1} \left(\int_{\substack{(D \setminus F_2) \cap \\ \{w:d_K(z,w)<R\}}} \frac{|\widehat{b}_{\ell}(z)-\widehat{b}_{\ell}(w)|}{F(z,w)^{n+1}}|f_j(w)| dV(w) \right)^p dV(z) \right)^{1/p}\\
				& \quad +  \left( \int_{D \setminus F_1} \left(\int_{\substack{(D \setminus F_2) \cap \\ \{w:d_K(z,w)\geq R\}}} \frac{|\widehat{b}_{\ell}(z)-\widehat{b}_{\ell}(w)|}{F(z,w)^{n+1}}|f_j(w)| dV(w) \right)^p dV(z)\right)^{1/p}\\
				& \lesssim  \varepsilon \|b\|_{ \textnormal{BMO}_{3\beta}^{\:p}} \left( \int_{D} \left(\int_{D} \frac{(1+d_K(z,w))}{F(z,w)^{n+1}}|f_j(w)| dV(w) \right)^p dV(z)\right)^{1/p}\\
				& \quad  +   \left( \int_{D} \left(\int_{(E(z,R))^c} \frac{(1+d_K(z,w))}{F(z,w)^{n+1}}|f_j(w)|dV(w) \right)^p dV(z) \right)^{1/p} \lesssim \varepsilon B \|b\|_{ \textnormal{BMO}_{3\beta}^{\:p}}.  
			\end{align*}
			
			In the last inequality, to bound the first term we used the argument in Theorem \ref{Kerzman main} and for the second term, we used the vanishing Rudin-Forelli estimates and Schur's Test (see Proposition \ref{RudinForelli} as well as the argument in Theorem \ref{Kerzman main}). Since we obtain a constant multiple of $\varepsilon,$ we are done with this estimate.
			
		The proof is complete. 		
		\end{proof}
	\end{thm}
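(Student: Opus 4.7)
The plan is to mirror the bounded case by splitting $[b, T_\varepsilon] = [b - \widehat{b}_\ell, T_\varepsilon] + [\widehat{b}_\ell, T_\varepsilon]$ via a fixed dyadic system and to upgrade each piece from boundedness to compactness using the VMO hypothesis. For the first summand, I would write $[b - \widehat{b}_\ell, T_\varepsilon] = M_{b - \widehat{b}_\ell} T_\varepsilon - T_\varepsilon M_{b - \widehat{b}_\ell}$ and show that $M_{b - \widehat{b}_\ell} T_\varepsilon$ is compact on $L^p(D)$; in view of Lemma \ref{CompactMultiplication}, this amounts to taking $f_j \rightharpoonup 0$ in $L^p$, splitting $D$ into a compact piece $X$ built from the shallow dyadic kubes (where VMO has not yet enforced smallness of the mean oscillation over the $E(c_j^k,\beta)$) and its complement, then using $T_\varepsilon(z,\cdot) \in L^{p'}(D)$ plus dominated convergence on $X$ and the dyadic-Carleson repackaging of Lemma \ref{SchurWithBMO} (with the VMO gain replacing the BMO norm) on $D \setminus X$. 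The companion term $T_\varepsilon M_{b-\widehat{b}_\ell}$ is handled by passing to the adjoint $M_{\overline{b}-\widehat{\overline{b}}_\ell} T_\varepsilon^\ast$ and repeating the same argument, using that $\overline{b} \in \textnormal{VMO}_{3\beta}^{\:q}$.

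For the second summand $[\widehat{b}_\ell, T_\varepsilon]$, which is the substantive part, I would again take $f_j \rightharpoonup 0$ in $L^p$ and estimate $\|[\widehat{b}_\ell, T_\varepsilon] f_j\|_{L^p}$ by splitting the kernel $T_\varepsilon(z,w)(\widehat{b}_\ell(z) - \widehat{b}_\ell(w))$ along three complementary regions: (i) the inner variable $w$ lies in a fixed compact core $F_2 \subset D$; (ii) $w \in D \setminus F_2$ and $d_K(z,w) < R$; (iii) $w \in D \setminus F_2$ and $d_K(z,w) \geq R$. Region (i) is handled by weak convergence plus dominated convergence, since $T_\varepsilon(z,\cdot)(\widehat{b}_\ell(z)-\widehat{b}_\ell(\cdot))\,\one_{F_2}(\cdot) \in L^{p'}(D)$ uniformly in $z$ (Lemma \ref{good lp behavior} gives the needed $L^p$ control on $\widehat{b}_\ell$), so the inner integral tends to zero pointwise in $z$ with an integrable majorant. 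Region (ii) is controlled by Lemma \ref{CompactDifferenceofAverages}, which upgrades Proposition \ref{averagegrowthrate} to $|\widehat{b}_\ell(z)-\widehat{b}_\ell(w)| \leq C\varepsilon\bigl(1+d_K(z,w)\bigr)$ whenever $z, w$ lie outside a suitably chosen compact set and $d_K(z,w) < R$, yielding an $\varepsilon$-gain in the Schur estimate. Region (iii) is controlled by choosing $R$ large enough that the vanishing Rudin-Forelli estimate in Proposition \ref{RudinForelli} contributes an $\varepsilon$.

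The main obstacle I anticipate is integrating the unbounded factor $d_K(z,w)$ against the singular kernel $F(z,w)^{-(n+1)}$ in the Schur-test step of regions (ii) and (iii), since VMO supplies no pointwise smallness of $\widehat{b}_\ell$-differences when $z$ and $w$ are far apart in the Kobayashi metric. The resolution, borrowed from the proof of Theorem \ref{Kerzman main}, is to apply Lemma \ref{metric estimate} to trade the factor $d_K(z,w)$ for a small power of $F(z,w)/(|\rho(z)||\rho(w)|)$, converting the integrand into something of the form $|\rho(z)|^{-\varepsilon_0(n+1)} |\rho(w)|^{-\varepsilon_0(n+1)} F(z,w)^{-(n+1)(1-2\varepsilon_0)}$, which for $\varepsilon_0>0$ chosen small is amenable to Schur's test with weight $|\rho|^{-\varepsilon_0}$ via Proposition \ref{RudinForelli}. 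Combining this Schur estimate with the VMO-gain in region (ii) and the vanishing Rudin-Forelli gain in region (iii), and summing against the weak convergence argument in region (i), should produce a bound of the form $\|[\widehat{b}_\ell, T_\varepsilon] f_j\|_{L^p} \lesssim \varepsilon$ uniformly in $j$ once $R$, $F_1$, $F_2$ are fixed, completing the compactness proof.
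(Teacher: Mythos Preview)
Your proposal is correct and follows essentially the same route as the paper: the same dyadic Berezin decomposition $[b,T_\varepsilon]=[b-\widehat{b}_\ell,T_\varepsilon]+[\widehat{b}_\ell,T_\varepsilon]$, the same treatment of the first summand via Lemma \ref{CompactMultiplication} plus duality, and the same mechanism for the second summand (weak convergence on a compact core, Lemma \ref{CompactDifferenceofAverages} for the near-diagonal piece, vanishing Rudin--Forelli for the far piece, and Lemma \ref{metric estimate} to tame the $d_K$ factor in the Schur step).

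One small point to tighten: your three-region split constrains only $w$ (and $d_K(z,w)$), but Lemma \ref{CompactDifferenceofAverages} needs \emph{both} $z$ and $w$ outside the compact set $F_1$. The paper inserts an extra split on the outer variable ($z\in F_1$ versus $z\in D\setminus F_1$), disposing of the $z\in F_1$ part by enlarging $F_2$ via dominated convergence before invoking Lemma \ref{CompactDifferenceofAverages} on the remainder. You can either add that intermediate split, or equivalently take $F_2$ to contain a Kobayashi $R$-neighborhood of $F_1$, so that $w\in D\setminus F_2$ together with $d_K(z,w)<R$ already forces $z\notin F_1$.
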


	We are ready to prove the sufficiency of Theorem \ref{CommutatorCompactMain}.
	
	\begin{thm} \label{compactness theorem} Suppose that $D$ is strongly pseudoconvex with $C^2$ boundary, $q \geq 2,$ $b \in \textnormal{VMO}_{3\beta}^{\:q},$ and $q' \leq p \leq q.$ Then the commutator $[b,P]$ is compact on $L^p(D).$ 
		
		\begin{proof}
			The proof is similar to that of Theorem \ref{boundednessmain}. Let $q \geq 2$ be fixed. We begin with the case $p=2$ and note that the following identity holds on $L^2(D)$:
			$$[b,P]=-[b,T_\varepsilon]S_\varepsilon^{-1}-P[b, S_\varepsilon]S_\varepsilon^{-1}.$$
			The result is then immediate from Theorem \ref{Kerzman main compact} and the fact that the compact operators form an ideal. 
			
			For the case $1 \leq p<2,$ recall that if $\varepsilon$ is chosen sufficiently small, we have the following identity, valid on $L^p(D)$:
			$$[b,P]= -[b,T_\varepsilon](I-A_\varepsilon)^{-1}-P[b, S_\varepsilon](I-A_\varepsilon)^{-1} +[b,P]B_\varepsilon (I-A_\varepsilon)^{-1}.$$
			It is immediate from Theorem \ref{Kerzman main compact} that the first two terms on the right hand side are compact operators. For the third term, it is easy to check that it is compact as a map from $L^p(D)$ to $L^2(D)$, using the result already proven for $p=2$. The result then follows from the bounded inclusion of $L^2(D)$ in $L^p(D)$ in this case. 
			
			Finally, the case $p>2$ can again be handled by duality.
		\end{proof}
		
	\end{thm}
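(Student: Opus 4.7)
The strategy will mirror the proof of Theorem \ref{boundednessmain}: reduce the compactness of $[b,P]$ to that of $[b,T_\varepsilon]$ via the Kerzman--Stein--Ligocka identity, and then invoke the compactness of $[b,T_\varepsilon]$ under the VMO assumption. The compactness of $[b, T_\varepsilon]$ (and of $[b,T_\varepsilon^*]$) for symbols in $\textnormal{VMO}_{3\beta}^{\:q}$ is exactly Theorem \ref{Kerzman main compact}, so that ingredient is already available. The job is therefore to transport this compactness through the operator-algebraic identities that link $T_\varepsilon$ to $P$.

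First I would handle the Hilbert space case $p=2$. Starting from the relation $PS_\varepsilon = T_\varepsilon$ used in the boundedness argument (where $S_\varepsilon=I-(T_\varepsilon^*-T_\varepsilon)$), the same formal commutator manipulation yields
\[
[b,P] = -[b,T_\varepsilon]\,S_\varepsilon^{-1} \;-\; P\,[b,S_\varepsilon]\,S_\varepsilon^{-1}
\]
as an identity of operators on $L^2(D)$, where $S_\varepsilon^{-1}$ exists because $T_\varepsilon^*-T_\varepsilon$ has purely imaginary spectrum. Since $[b,S_\varepsilon]=[b,T_\varepsilon^*]-[b,T_\varepsilon]$, Theorem \ref{Kerzman main compact} (applied to both $T_\varepsilon$ and $T_\varepsilon^*$) shows that both $[b,T_\varepsilon]$ and $[b,S_\varepsilon]$ are compact on $L^2$; composition with the bounded operators $S_\varepsilon^{-1}$ and $P$ preserves compactness, so the ideal property of compact operators finishes this case.

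For the range $q' \leq p < 2$ the operator $S_\varepsilon$ may fail to be invertible on $L^p$, so I would use the refined splitting $S_\varepsilon = I - (A_\varepsilon+B_\varepsilon)$ from \cite{LS2012}*{Lemma 5.1}, choosing $\varepsilon$ small enough that $\|A_\varepsilon\|_{L^p\to L^p}<1/2$ and $(I-A_\varepsilon)^{-1}$ exists on $L^p$. Proceeding as in Theorem \ref{boundednessmain}, I would obtain
\[
[b,P] = -[b,T_\varepsilon](I-A_\varepsilon)^{-1} - P[b,S_\varepsilon](I-A_\varepsilon)^{-1} + [b,P]\,B_\varepsilon\,(I-A_\varepsilon)^{-1}.
\]
The first two terms are compact on $L^p(D)$ by Theorem \ref{Kerzman main compact} and the ideal property. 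For the last term, I would exploit that $B_\varepsilon$ is smoothing ($L^1(D)\to C(\overline D)$): for $f\in L^p(D)$, $B_\varepsilon(I-A_\varepsilon)^{-1}f$ lands continuously into $L^2(D)$, so one can factor $[b,P]B_\varepsilon(I-A_\varepsilon)^{-1}$ through $L^2(D)$, where the $p=2$ case already gives compactness; the bounded inclusion $L^2(D)\hookrightarrow L^p(D)$ (valid on a bounded domain for $p<2$) then transfers compactness back to $L^p$. The case $p>2$ follows by duality, using $([b,P])^* = -[\bar b, P]$ and noting that $\bar b \in \textnormal{VMO}_{3\beta}^{\:q}$ as well.

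The main obstacle I anticipate is the third term $[b,P]B_\varepsilon(I-A_\varepsilon)^{-1}$ in the $p<2$ case: one must argue its compactness without circular reasoning, and the cleanest way is the factorization-through-$L^2$ trick above, which crucially relies on the $L^1\to L^\infty$ smoothing of $B_\varepsilon$ and on the fact that compactness in a stronger topology survives continuous embedding into a weaker one. Everything else is assembling already-proved pieces.
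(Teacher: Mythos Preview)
Your proposal is correct and follows essentially the same route as the paper's proof: the $p=2$ case via the identity $[b,P]=[b,T_\varepsilon]S_\varepsilon^{-1}-P[b,S_\varepsilon]S_\varepsilon^{-1}$ and the ideal property, the case $p<2$ via the refined splitting $S_\varepsilon=I-(A_\varepsilon+B_\varepsilon)$ with the third term handled by factoring through $L^2$ using the smoothing of $B_\varepsilon$, and $p>2$ by duality. The only minor point is that Theorem \ref{Kerzman main compact} as stated covers $[b,T_\varepsilon]$ but not $[b,T_\varepsilon^*]$ explicitly; however, as you implicitly use, the latter follows by duality since $([b,T_\varepsilon^*])^*=-[\bar b,T_\varepsilon]$ and $\bar b\in\textnormal{VMO}_{3\beta}^{\:q}$.
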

	
	We obtain the following corollary for Hankel operators in a similar manner as Corollary \ref{CommutatorBoundedSuff}.
	
	\begin{cor}\label{CommutatorCompactSuff}Suppose that $D$ is strongly pseudoconvex with $C^2$ boundary, $q \geq 2$, $b \in \textnormal{VMO}_{3\beta}^{\:q},$ and $q' \leq p \leq q.$  Then the Hankel operator $H_b$ is compact on $L^p(D)$, as is $H_{\bar{b}}^*.$
	\end{cor}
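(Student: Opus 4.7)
The plan is to deduce this corollary directly from Theorem \ref{compactness theorem} combined with the same algebraic identities that were used in the proof of Corollary \ref{CommutatorBoundedSuff}. Specifically, I would first recall that since $P$ is a projection on $L^2(D)$, one has the formal identities
$$H_b = (I-P) M_b P = M_b P - P M_b P = [b,P] P,$$
and, by taking adjoints together with the self-adjointness of $P$ on $L^2$,
$$H_{\bar b}^{\,*} = -P[b,P].$$

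Next, the hypothesis $b \in \textnormal{VMO}_{3\beta}^{\:q}$ with $q\geq 2$ and $q' \leq p \leq q$ puts us exactly in the setting of Theorem \ref{compactness theorem}, from which $[b,P]$ is compact on $L^p(D)$. Furthermore, in this same range of exponents, the Bergman projection $P$ itself extends to a bounded operator on $L^p(D)$; this is the $L^p$-regularity result of Lanzani--Stein \cite{LS2012} for strongly pseudoconvex domains of class $C^2$, which is the cornerstone regularity input in the whole paper. Therefore, since the compact operators form a two-sided ideal within the bounded operators on $L^p(D)$, the compositions $[b,P]P$ and $P[b,P]$ are both compact on $L^p(D)$, and the identities above then identify them with $H_b$ and $-H_{\bar b}^{\,*}$ respectively.

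The corollary presents essentially no obstacle beyond Theorem \ref{compactness theorem}: the only point worth being careful about is that the identities $H_b=[b,P]P$ and $H_{\bar b}^{\,*}=-P[b,P]$ should first be verified on a dense subspace of $L^p(D)$ on which each factor makes pointwise sense (e.g.\ on $L^\infty(D)$, so that $bf \in L^p(D)$ and all of $P$, $M_b$, $[b,P]$ are simultaneously defined) and then extended by continuity to all of $L^p(D)$ using the $L^p$-boundedness of $P$ and of $[b,P]$ provided by Theorem \ref{boundednessmain}. Once these identifications hold on $L^p(D)$, the ideal property of the compact operators completes the argument with no further work.
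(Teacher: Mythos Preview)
Your argument is correct and matches the paper's own approach: the paper simply notes that this corollary follows ``in a similar manner as Corollary \ref{CommutatorBoundedSuff},'' i.e., from Theorem \ref{compactness theorem} combined with the identities $H_b=[b,P]P$ and $H_{\bar b}^{\,*}=-P[b,P]$ and the ideal property of compact operators. Your additional remark about verifying the identities on a dense subspace before extending by continuity is a reasonable bit of care, but nothing beyond what the paper intends.
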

	
	\subsection{Necessity} 	
	We now turn to the necessity of the  $\textnormal{VMO}_{r}^p$ condition for the compactness of the commutator $[b,P].$ We also prove that in general, if $b \in L^p(D)$ and $H_b$ is compact on $L^p(D)$, then $b$ belongs to a space known as $\text{VDA}_r^{\:p}.$  The arguments in this section are similar to the ones for the necessity of the  $\textnormal{BMO}_{r}^p$ condition in Section 3.3.
	
We define a class of spaces that characterize functions with  ``vanishing" local distances to holomorphic functions.
	
	\begin{defin}
		Let $1<p<\infty$ and $r>0.$ Then we say $b$ belongs to $\textnormal{VDA}_r^p$ if $b \in \textnormal{BDA}_r^{\:p}$ and 
		$$ \lim_{z \rightarrow bD} b_{r,p}(z)=0.$$
	\end{defin}
	We need the following theorem which is an analog to Theorem \ref{BDA BMO Rel}. 
	
	\begin{thm}\label{VDA VMO Rel} For any sufficiently large $r$ there holds 
		$$\textnormal{VDA}_r^p \cap \overline{\textnormal{VDA}_r^p}= \textnormal{VMO}_{r}^{\:p}.$$
		
		\begin{proof} 
			The proof for \textnormal{VDA} is similar to that of Theorem \ref{BDA BMO Rel} with straightforward modifications.
				\end{proof}
	\end{thm}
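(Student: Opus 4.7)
The plan is to mimic the strategy of Theorem \ref{BDA BMO Rel}, but with all bounds tracked pointwise in $z$ so that the \emph{vanishing} at the boundary condition transfers cleanly from $\textnormal{VDA}_r^{\:p}$ to $\textnormal{VMO}_r^{\:p}$. The easy containment $\textnormal{VMO}_r^{\:p}\subseteq \textnormal{VDA}_r^{\:p}\cap\overline{\textnormal{VDA}_r^{\:p}}$ is immediate: for each $z\in D$ the constant $h\equiv\langle b\rangle_{E(z,r)}$ is (trivially) holomorphic, which gives $b_{r,p}(z)\leq \frac{1}{|E(z,r)|}\int_{E(z,r)}|b-\langle b\rangle_{E(z,r)}|^{p}\,dV$, and the analogous inequality holds for $\bar b$. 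Since $\textnormal{VMO}_r^{\:p}$ forces the right-hand side to vanish as $z\to bD$, both $b$ and $\bar b$ lie in $\textnormal{VDA}_r^{\:p}$.

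For the nontrivial inclusion, I would first reduce to real-valued symbols: $\textnormal{VDA}_r^{\:p}$ is a complex vector space, so $b\in \textnormal{VDA}_r^{\:p}\cap\overline{\textnormal{VDA}_r^{\:p}}$ implies $\textnormal{Re}\,b,\textnormal{Im}\,b\in\textnormal{VDA}_r^{\:p}$, and it suffices to show that any real-valued $b\in\textnormal{VDA}_r^{\:p}$ belongs to $\textnormal{VMO}_r^{\:p}$. Fixing $r$ large enough (as in Theorem \ref{BDA BMO Rel}) so that there exists $r'>0$ with $E(z,r')\subset P(z,a(r))\subset E(z,r)$ for all $z\in D$, I will, for each $z\in D$, choose a holomorphic $h_z$ on $D$ with
\[
\frac{1}{|E(z,r)|}\int_{E(z,r)}|b-h_z|^{p}\,dV\;\leq\; 2\,b_{r,p}(z).
\]
I then run the chain of inequalities in the proof of Theorem \ref{BDA BMO Rel} \emph{verbatim}, but keep $b_{r,p}(z)$ in place of $\|b\|_{\textnormal{BDA}_r^{\:p}}^{p}$ throughout. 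This yields a pointwise bound of the form
\[
\frac{1}{|E(z,r')|}\int_{E(z,r')}\bigl|b-\overline{h_z(z)}\bigr|^{p}\,dV \;\leq\; C\,b_{r,p}(z),
\]
where the constant $C$ depends only on $r$, $r'$, $p$, and $D$; the key ingredient is Lemma \ref{PolydiskImBound} applied to the polydisk $P(z,a(r))$, together with the reality of $b$ which is what lets us replace the $L^{p}$ norm of $h_z-\overline{h_z(z)}$ with the $L^{p}$ norm of $\textnormal{Im}(h_z-b)$ and hence with $b-h_z$.

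Since $\overline{h_z(z)}$ is admissible as the ``constant to subtract,'' a standard triangle-inequality bound gives
\[
\frac{1}{|E(z,r')|}\int_{E(z,r')}\bigl|b-\langle b\rangle_{E(z,r')}\bigr|^{p}\,dV\;\lesssim\; b_{r,p}(z),
\]
and the hypothesis $b\in\textnormal{VDA}_r^{\:p}$ gives $b_{r,p}(z)\to 0$ as $z\to bD$. This shows $b\in\textnormal{VMO}_{r'}^{\:p}$, and invoking Theorem \ref{VMO equivalence} (independence of $\textnormal{VMO}_{r}^{\:p}$ from the radius) promotes the conclusion to $b\in\textnormal{VMO}_r^{\:p}$. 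The main subtlety is bookkeeping: one must verify that every constant produced in the BDA chain is independent of $z$ (so that the $z$-dependence rests entirely in $b_{r,p}(z)$), and that Lemma \ref{PolydiskImBound}'s constant $C_p$ is likewise uniform in the base point, which is precisely the content of the scaling argument in that lemma's proof.
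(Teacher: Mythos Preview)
Your proposal is correct and follows exactly the route the paper has in mind: you carry out the argument of Theorem \ref{BDA BMO Rel} while tracking the pointwise quantity $b_{r,p}(z)$ in place of its supremum, and then invoke the radius-independence of $\textnormal{VMO}$ (Theorem \ref{VMO equivalence}) in place of that of $\textnormal{BMO}$. This is precisely the ``straightforward modification'' the paper alludes to.
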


	\begin{thm}\label{CommutatorCompactNecessity} Suppose $q \geq 2,$ $b \in L^q(D)$, and  $[b,P]$ is compact on $L^p(D)$ for all $q' \leq p \leq q.$ Then $b \in \textnormal{VMO} _{r}^{\:q}.$
		
		\begin{proof}
			By the same reasoning as Theorem \ref{CommutatorBoundedNecessity}, it suffices to show that the compactness of $H_b$ on $L^q(D)$ implies $b \in \text{VDA}_r^q.$ This is more or less immediate from the proof of Theorem \ref{CommutatorBoundedNecessity}, since we have the estimate $\|H_b(S_{z,q})\|_{L^q(D)}^q \gtrsim b_{r,q}(z)$, and it is clear from construction that the functions $S_{z,q}$ converge to $0$ weakly in $A^q(D)$ as $z \rightarrow bD$, so $b_{r,q}(z) \rightarrow 0$ as well. 
		\end{proof}
	\end{thm}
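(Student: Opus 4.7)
My plan is to mirror the structure of the necessity argument for boundedness (Theorem \ref{CommutatorBoundedNecessity}) and reduce the problem to showing that compactness of the Hankel operators $H_b$ and $H_{\bar b}$ forces $b \in \textnormal{VDA}_r^{\:q} \cap \overline{\textnormal{VDA}_r^{\:q}}$. The identities $H_b = [b,P]P$ and $H_{\bar b}^* = -P[b,P]$, combined with the $L^q$-boundedness of $P$, imply that compactness of $[b,P]$ on $L^q(D)$ yields compactness of both $H_b$ and $H_{\bar b}$ on $L^q(D)$. Once I can conclude $b \in \textnormal{VDA}_r^{\:q}$ (and $\bar b$ analogously), Theorem \ref{VDA VMO Rel} delivers $b \in \textnormal{VMO}_r^{\:q}$ for all sufficiently large $r$; then the fact (to be verified in the BMO/VMO section) that $\textnormal{VMO}_r^{\:q}$ is independent of $r$ finishes the argument.

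The heart of the matter is to show: if $H_b$ is compact on $L^q(D)$, then $b_{r,q}(z) \to 0$ as $z \to bD$. For this I will reuse the pointwise lower bound derived in the proof of Theorem \ref{CommutatorBoundedNecessity},
\[
\|H_b(S_{z,q})\|_{L^q(D)}^q \;\gtrsim\; b_{r,q}(z),
\]
which followed from Corollary \ref{RecipMeasureBall} applied to the normalized peaking functions $S_{z,q}$ (Definition \ref{20220924defn01}) together with the holomorphy of $S_{z,q}$ permitting the substitution $b \mapsto b - \frac{1}{S_{z,q}}P(S_{z,q}b)$ inside $E(z,r)$. Given this estimate, it suffices to prove $\|H_b(S_{z,q})\|_{L^q(D)} \to 0$ as $z \to bD$, and for this I would verify that $\{S_{z,q}\}$ tends to $0$ weakly in $A^q(D)$ as $z \to bD$: then the compact operator $H_b$ sends this weakly null net to a norm-null net.

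The main obstacle is therefore the weak convergence $S_{z,q} \rightharpoonup 0$ as $z \to bD$. Since $\|S_{z,q}\|_{L^q(D)} \approx 1$ uniformly by Proposition \ref{LpSizePeaking}, this family is bounded; by a standard density argument it suffices to check $\langle S_{z,q}, \varphi\rangle \to 0$ on a dense subset of $L^{q'}(D)$. For $\varphi$ smooth and compactly supported in $D$ this follows immediately from the pointwise bound $|S_{z,q}(w)| \lesssim |\rho(z)|^{(n+1)/q'} F(z,w)^{-(n+1)} \to 0$ as $z \to bD$ for each fixed $w \in \textnormal{supp}\,\varphi$, together with dominated convergence using the uniform estimate from \eqref{20200914eq01} and Proposition \ref{PeakingExistence}(3). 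Thus $S_{z,q} \rightharpoonup 0$ in $L^q(D)$ and hence in $A^q(D)$.

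Combining these ingredients gives $b_{r,q}(z) \to 0$ as $z \to bD$, so $b \in \textnormal{VDA}_r^{\:q}$. Running the same argument with $H_{\bar b}$ in place of $H_b$ (using that $[\bar b, P] = -([b,P])^*$ is also compact on $L^{p'}(D)$ for $q' \le p \le q$, hence compact on $L^q(D)$ by duality, so $H_{\bar b}$ is compact) yields $\bar b \in \textnormal{VDA}_r^{\:q}$, i.e.\ $b \in \overline{\textnormal{VDA}_r^{\:q}}$. Theorem \ref{VDA VMO Rel} then gives $b \in \textnormal{VMO}_r^{\:q}$ for all sufficiently large $r$, and invoking the $r$-independence of $\textnormal{VMO}_r^{\:q}$ (to be established in Theorem \ref{VMO equivalence}) completes the proof.
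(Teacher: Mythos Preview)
Your proposal is correct and follows essentially the same approach as the paper's proof: reduce to showing compactness of $H_b$ (and $H_{\bar b}$) on $L^q(D)$ implies $b \in \textnormal{VDA}_r^{\:q}$, invoke the lower bound $\|H_b(S_{z,q})\|_{L^q(D)}^q \gtrsim b_{r,q}(z)$ from Theorem \ref{CommutatorBoundedNecessity}, and conclude via the weak convergence $S_{z,q}\rightharpoonup 0$ together with Theorem \ref{VDA VMO Rel}. The paper simply asserts the weak convergence as ``clear from construction,'' whereas you spell out the standard density-plus-dominated-convergence verification; otherwise the arguments are identical.
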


\section{Notions of BMO and VMO: Proof of Theorem \ref{BMOmaintheorem}}
\label{BMOsection}

\subsection{Equivalence of various $\textnormal{BMO}$ spaces}

Recall our first notion of $\textnormal{BMO}$ spaces, that is, $\textnormal{BMO}_r^{\: p}$, refers to the $\textnormal{BMO}$-spaces whose $p$-th average are taken over all Kobayashi balls with respect to radius $r$ (see, Definition \ref{BMO-01}). In our second notion of $\textnormal{BMO}$ spaces, we simply replace all the Kobayashi balls in the averages by all dyadic tents, which we recall are constructed in Section \ref{Prelim}, and this leads to the following dyadic \textnormal{BMO} spaces: 

\begin{defin} \label{BMO-02}
Let $1 \leq p<\infty.$ We say an $L^p(D)$ function $b \in \textnormal{BMO}_{\mathcal{D}}^{\:p}$ if 
$$\|b\|_{\textnormal{BMO}_{\mathcal{D}}^{\:p}}:= \sup_{\widehat{K}_j^k  \in \mathcal{T}_{\ell}, 1\leq \ell \leq N}\left(\frac{1}{|\widehat{K}_j^k|} \int_{\widehat{K}_j^k}|b-\langle b \rangle_{\widehat{K}_j^k}|^p \, dV \right)^{1/p}<\infty.$$
\end{defin}

To give our third notion of \textnormal{BMO} spaces, we need to introduce an analog of Berezin transform of a function $b,$ which we recall is defined by $\langle b k_z, k_z \rangle,$ where $k_z$ is the normalized reproducing kernel for the Bergman space $A^2(D).$ The existence of the functions $k_z$ is guaranteed by Hilbert space theory, but the size/growth rate of these functions is unknown because precise estimates on the Bergman kernel of $D$ are not known. Therefore, we replace the kernel functions $k_z$ with test functions $s_z$ from Section \ref{CompactSection}. We provide a continuous analog of  Berezin transform in this section, while the dyadic Berezin transforms introduced in Section \ref{Boundedness} represent discrete analogs.

\begin{defin}\label{BMOBerezin}
Given $1 \leq  p< \infty$, we say a function $b \in L^p(D)$ belongs to $\textnormal{BMO}_{\mathcal{B}}^{\:p}$ if
\begin{align*} \|b\|_{\textnormal{BMO}_{\mathcal{B}}^{\:p}}^p := 
\sup_{z \in D} \int_{D} | b(w)- \widetilde{b}(z)|^p|s_z(w)|^2 \, dV(w) & <\infty,       
\end{align*}
where $s_z$ is defined in \eqref{modifiedBerezin} and 
$$ 
\widetilde{b}(z):= \langle b s_z, s_z \rangle= \int_{D} b(w) |s_z(w)|^2 \, dV(w)
$$
is the modified Berezin transform. 

Note that when $p=2$, the $\textnormal{BMO}_{\mathcal B}^{\: 2}$ condition can also be written as 
$$\sup_{z \in D} \widetilde{|b|^2}(z)-|\widetilde{b}(z)|^2<\infty.$$
\end{defin}
Our main result for this section is as follows. Recall the constant $\beta$ introduced in Lemma \ref{comparablekobayashiball}, which depends on the domain $D.$

\begin{thm}\label{BMOequivalence} Suppose $D$ is strongly pseudoconvex with $C^2$ boundary. There is $\beta=\beta(D)>0$ such that $\textnormal{BMO}_{\mathcal{B}}^{\:p}= \textnormal{BMO}_{\mathcal{D}}^{\:p}=\textnormal{BMO}^{\:p}_{r}$ with equivalent norms for all $r\geq 3\beta.$ 

\end{thm}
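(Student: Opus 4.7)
The plan is to establish the cycle of inclusions
\[
\textnormal{BMO}_{\mathcal D}^{\:p} \subseteq \textnormal{BMO}_r^{\:p} \subseteq \textnormal{BMO}_{\mathcal B}^{\:p} \subseteq \textnormal{BMO}_{r'}^{\:p} \subseteq \textnormal{BMO}_{\mathcal D}^{\:p}
\]
for $r\ge 3\beta$ and a small enough $r'>0$. Combined with Remark \ref{ContainmentRem}(1), this simultaneously yields the three-way equality and the claimed radius independence of $\textnormal{BMO}_r^{\:p}$. The two dyadic-to-Kobayashi inclusions rest on the elementary observation that if $E\subseteq F$ with $|E|\approx|F|$, then $\frac{1}{|E|}\int_E|b-\langle b\rangle_E|^p\le C\frac{1}{|F|}\int_F|b-\langle b\rangle_F|^p$. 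For $\textnormal{BMO}_r^{\:p}\subseteq\textnormal{BMO}_{\mathcal D}^{\:p}$ I would verify, using the definition \eqref{DyadicTents} together with Theorem \ref{BBTheorem}, that every tent $\widehat{K}_j^k$ sits inside a Kobayashi ball $E(c_j^k,C_0)$ of universally bounded radius $C_0$ and has measure comparable to $|E(c_j^k,C_0)|$. For the reverse, Lemma \ref{HK2} produces, for each Kobayashi ball of radius $\ge 3\beta$, a dyadic tent in some $\mathcal{T}_\ell$ containing it with comparable measure.

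The inclusion $\textnormal{BMO}_{\mathcal B}^{\:p}\subseteq\textnormal{BMO}_{r'}^{\:p}$ is the most direct: by Corollary \ref{RecipMeasureBall}, $|s_z(w)|^2\ge C_{r'}/|E(z,r')|$ on $E(z,r')$, so
\[
\frac{1}{|E(z,r')|}\int_{E(z,r')}|b-\widetilde b(z)|^p\,dV \le \frac{1}{C_{r'}}\int_{E(z,r')}|b-\widetilde b(z)|^p|s_z|^2\,dV \le \frac{1}{C_{r'}}\|b\|_{\textnormal{BMO}_{\mathcal B}^{\:p}}^p,
\]
and $\widetilde b(z)$ can be replaced by $\langle b\rangle_{E(z,r')}$ via the triangle inequality at cost of a constant (using that $\langle b\rangle_{E(z,r')}$ is an $L^p$-quasi-minimizer over constants).

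The main obstacle is the inclusion $\textnormal{BMO}_r^{\:p}\subseteq\textnormal{BMO}_{\mathcal B}^{\:p}$. Fix $z\in D$; since $\int|s_z|^2\,dV=1$, Jensen's inequality gives $\int|b-\widetilde b(z)|^p|s_z|^2\,dV\le 2^p\int|b-\langle b\rangle_{E(z,r)}|^p|s_z|^2\,dV$. Decomposing $D=E(z,r)\sqcup\bigsqcup_{k\ge 1}A_k$ with $A_k=E(z,(k+1)r)\setminus E(z,kr)$, I would bound each annular contribution by
\[
\int_{A_k}|b-\langle b\rangle_{E(z,(k+1)r)}|^p|s_z|^2\,dV + |\langle b\rangle_{E(z,r)}-\langle b\rangle_{E(z,(k+1)r)}|^p\int_{A_k}|s_z|^2\,dV.
\]
The first term is estimated using the explicit formula \eqref{modifiedBerezin}, the size bound $|G(z,w)|\gtrsim F(z,w)$ from Proposition \ref{PeakingExistence}, and the vanishing Rudin--Forelli estimate of Proposition \ref{RudinForelli} applied on the tails. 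The second term is handled by telescoping over the nested chain $E(z,r)\subset\cdots\subset E(z,(k+1)r)$, as in Lemma \ref{differenceofaverages}, which yields a bound of order $k\|b\|_{\textnormal{BMO}_r^{\:p}}$ (with the chain of averages iteratively controlled via the already established $\textnormal{BMO}_r^{\:p}\leftrightarrow\textnormal{BMO}_{\mathcal D}^{\:p}$ comparison applied at successive radii), paired with the vanishing tail estimate for $\int_{D\setminus E(z,kr)}|s_z|^2\,dV$. Ensuring that this tail decay dominates the polynomial-in-$k$ growth from the telescoping is the delicate balancing; the ``vanishing'' refinement in Proposition \ref{RudinForelli} (rather than a plain $L^\infty$ Rudin--Forelli estimate) is precisely what makes the sum over $k$ converge.
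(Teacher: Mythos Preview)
There is a genuine error in the step $\textnormal{BMO}_{r'}^{\:p}\subseteq\textnormal{BMO}_{\mathcal D}^{\:p}$: the assertion that ``every tent $\widehat{K}_j^k$ sits inside a Kobayashi ball $E(c_j^k,C_0)$ of universally bounded radius $C_0$'' is false. By definition \eqref{DyadicTents} the tent contains points $z$ with $\delta(z)=|\pi(z)-z|$ arbitrarily small, and by Theorem \ref{BBTheorem} such points satisfy $d_K(c_j^k,z)\gtrsim \tfrac{1}{2}\log\bigl(\delta(c_j^k)/\delta(z)\bigr)\to\infty$. Only the \emph{kubes} $K_j^k$ (not the tents $\widehat{K}_j^k$) sit in Kobayashi balls of fixed radius, cf.\ Lemma \ref{comparablekobayashiball}. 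This is precisely why the paper cannot absorb a tent into a single ball: it instead decomposes $\widehat{K}_{j_0}^{k_0}$ into its descendant kubes, invokes Lemma \ref{differenceofaverages} along the dyadic tree to get $|\langle b\rangle_{E(c_j^k,\beta)}-\langle b\rangle_{E(c_{j_0}^{k_0},\beta)}|\lesssim (k-k_0)\|b\|_{\textnormal{BMO}_{3\beta}^{\:p}}$, and then sums the convergent series $\sum_k (k-k_0)^p s^{-(2n+2)(k-k_0)}$.

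Your annular decomposition for $\textnormal{BMO}_r^{\:p}\subseteq\textnormal{BMO}_{\mathcal B}^{\:p}$ is also problematic as written. Controlling the first piece $\int_{A_k}|b-\langle b\rangle_{E(z,(k+1)r)}|^p|s_z|^2\,dV$ requires control of the oscillation of $b$ over the large ball $E(z,(k+1)r)$, i.e.\ a bound on $\|b\|_{\textnormal{BMO}_{(k+1)r}^{\:p}}$, which is exactly the radius independence you are trying to establish; the appeal to an ``already established $\textnormal{BMO}_r^{\:p}\leftrightarrow\textnormal{BMO}_{\mathcal D}^{\:p}$ comparison'' is circular since only one direction of that comparison is available at that point in your chain. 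The paper avoids this by inserting the dyadic Berezin transform $\widehat{b}_\ell$: the piece $\int_D|b-\widehat{b}_\ell|^p|s_z|^2\,dV$ is handled by the Carleson property (Lemma \ref{carleson lemma}), and for $\int_D|\widehat{b}_\ell-\widetilde{b}(z)|^p|s_z|^2\,dV$ one uses the pointwise bound $|\widehat{b}_\ell(\zeta)-\widehat{b}_\ell(w)|\lesssim\|b\|_{\textnormal{BMO}_{3\beta}^{\:p}}(1+d_K(\zeta,w))$ from Proposition \ref{averagegrowthrate} together with Lemma \ref{metric estimate} and Proposition \ref{RudinForelli}, never needing BMO norms at unboundedly large radii.
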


\begin{rem}
  See \cite{BL1993}*{Lemma 2.5} for a related result. However, the authors did not relate the spaces to the Kobayashi metric or use dyadic grids.   
\end{rem}

We need the following lemmas. 

\begin{lemma}\label{BerezinKobayashiEquivalence}
There holds $\textnormal{BMO}_{\mathcal{B}}^{\:p}= \textnormal{BMO}_{r}^{\:p}$ with equivalent norms whenever $r\geq 3 \beta.$
\begin{proof}
We first show that $\textnormal{BMO}_{\mathcal{B}}^{\:p} \subset \textnormal{BMO}_{r}^{\:p}$ for all $r>0;$ this is the easier containment. Notice that, using Corollary \ref{RecipMeasureBall}, for any $z \in D$ there holds
$$ 
\|b\|_{\textnormal{BMO}_{\mathcal{B}}^{\:p}}^p \geq C_r \frac{1}{|E(z,r)|}\int_{E(z,r)} \left| b(w)- \widetilde{b}(z) \right|^p \, dV(w).
$$
Therefore, we have, for any $z \in D$:
\begin{eqnarray*}
&& \left(\frac{1}{|E(z,r)|}\int_{E(z,r)} \left| b(w)- \langle b \rangle_{E(z,r)} \right|^p \, dV(w)\right)^{1/p} \\
&& \leq \left(\frac{1}{|E(z,r)|}\int_{E(z,r)} \left| b(w)- \widetilde{b}(z) \right|^p \, dV(w)\right)^{1/p} +  \left(\frac{1}{|E(z,r)|}\int_{E(z,r)} \left| \widetilde{b}(z)- \langle b \rangle_{E(z,r)} \right|^p \, dV(w)\right)^{1/p}\\
&& \leq 2 \left(\frac{1}{|E(z,r)|}\int_{E(z,r)} \left| b(w)- \widetilde{b}(z) \right|^p \, dV(w)\right)^{1/p} \leq \frac{2}{C_r^{1/p}} \|b\|_{\textnormal{BMO}_{\mathcal{B}}^{\:p}}.
\end{eqnarray*}
Taking a supremum over all $z \in D$ then establishes the first containment.

For the opposite containment, choose any integer $\ell \in \{1,\dots, N\}.$ Now estimate as follows:
\begin{align*}
& \left( \int_{D}|b-\widetilde{b}(z)|^p|s_z|^2 \, dV \right)^{1/p}\\
& \quad \leq \left(\int_{D} |b-\widehat{b}_{\ell}|^p |s_z|^2 \, dV\right)^{1/p} + \left(\int_{D} |\widehat{b}_{\ell}-\widetilde{b}(z)|^p |s_z|^2 \, dV\right)^{1/p}\\
& \quad := A(z)+B(z).
\end{align*}

By the proof of Lemma \ref{carleson lemma}, $A(z)$ is controlled by a universal constant times 
$$\|b\|_{\textnormal{BMO}_{\beta}^{\:p}} \left(\int_{D}|s_z|^2 \, dV\right)^{1/p} 
\leq C_r \|b\|_{\textnormal{BMO}_{r}^{\:p}}.$$

To estimate $B(z)$, we write, for $\zeta \in D$:
\begin{align*}
|\widehat{b}_{\ell}(\zeta)-\widetilde{b}(z)| & \leq  \int_{D} |\widehat{b}_{\ell}(\zeta)-b(w)||s_{z}(w)|^2 \, dV(w)\\
& \leq   \int_{D} |\widehat{b}_{\ell}(\zeta)-\widehat{b}_{\ell}(w)||s_z(w )|^2 \, dV(w)+                     \int_{D} |\widehat{b}_{\ell}(w)-b(w)||s_{z}(w)|^2 \, dV(w)\\
& := B_1(z)+ B_2(z).
\end{align*}

The term $B_2(z)$ is evidently dominated (uniformly in $z$) by a constant multiple of  $\|b\|_{\textnormal{BMO}_{r}^{\:p}},$ as we argued before. We further split the term $B_1(z)$ as follows, using the $L^2$ normalization of the function $s_z$:
\begin{align*}
B_1(z) & \leq \int_{D}|\widehat{b}_{\ell}(\zeta)-\widehat{b}_{\ell}(z)||s_z(w)|^2 \, dV(w)  + \int_{D}|\widehat{b}_{\ell}(z)-\widehat{b}_{\ell}(w)||s_z(w)|^2 \, dV(w)\\
& = |\widehat{b}_{\ell}(\zeta)-\widehat{b}_{\ell}(z)| + \int_{D}|\widehat{b}_{\ell}(z)-\widehat{b}_{\ell}(w)||s_z(w)|^2 \, dV(w)\\
& := B_1^1(z)+ B_1^2(z). 
\end{align*}

Next, to control $B_1^2(z)$ we use Proposition \ref{PeakingExistence} and the definition of $s_{z}$, Lemma \ref{metric estimate}, and Proposition \ref{averagegrowthrate}.
\begin{align*}
\int_{D} |\widehat{b}_{\ell}(z)-\widehat{b}_{\ell}(w)||s_z(w)|^2 \, dV(w) & \lesssim \|b\|_{\textnormal{BMO}_{3 \beta}^{\:p}} \int_{D}(1+d_K(z,w)) |s_z(w)|^2\, dV(w)\\
& \lesssim  \|b\|_{\textnormal{BMO}_{3 \beta}^{\:p}}+ \|b\|_{\textnormal{BMO}_{3 \beta}^{\:p}} \int_{D} \frac{(d_K(z,w))|\rho(z)|^{n+1}} {F(z,w)^{2(n+1)}} \, dV(w)\\
& \lesssim \|b\|_{\textnormal{BMO}_{3 \beta}^{\:p}} |\rho(z)|^{(n+1)(1-\varepsilon)}\int_{D} \frac{|\rho(w)|^{-\varepsilon(n+1)}} {F(z,w)^{2(n+1)(1-\varepsilon)}} \, dV(w).
\end{align*}
If $\varepsilon>0$ is chosen sufficiently small, then by Proposition \ref{RudinForelli} we know that $B_1^2(z)$ is also uniformly bounded by $\|b\|_{\textnormal{BMO}_{r}^{\:p}}.$ 

Inserting the estimates for $B_1(z)$ and $B_2(z)$ into the integral defining $B(z)$ and repeating the argument given above for $B_1^2(z)$ once more, except with power $p$, we obtain
\begin{align*} B(z) & \leq C_p \left[ \int_{D}|\widehat{b}_{\ell}(\zeta)-\widehat{b}_{\ell}(z)|^p |s_z(\zeta)|^2 \, dV(\zeta)+ \|b\|_{\textnormal{BMO}_{3 \beta}^{\:p}} \int_{D} |s_z(\zeta)|^2\, dV(\zeta)   \right]^{1/p}\\
& \lesssim \|b\|_{\textnormal{BMO}_r^{\:p}},
\end{align*} 
 as we sought to show. 

\end{proof}
\end{lemma}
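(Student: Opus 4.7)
My plan is to prove the two containments $\textnormal{BMO}_{\mathcal{B}}^{\:p} \subset \textnormal{BMO}_{r}^{\:p}$ and $\textnormal{BMO}_{r}^{\:p} \subset \textnormal{BMO}_{\mathcal{B}}^{\:p}$ separately; the first is nearly immediate, while the second is the substantive one and will drive the hypothesis $r \geq 3\beta$.

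For the inclusion $\textnormal{BMO}_{\mathcal{B}}^{\:p} \hookrightarrow \textnormal{BMO}_{r}^{\:p}$, I would invoke Corollary \ref{RecipMeasureBall} to obtain a lower bound $|s_z(w)|^2 \geq C_r/|E(z,r)|$ for every $w \in E(z,r)$. This immediately yields
\[
\frac{1}{|E(z,r)|}\int_{E(z,r)} |b - \widetilde{b}(z)|^p\, dV \leq C_r^{-1} \|b\|_{\textnormal{BMO}_{\mathcal{B}}^{\:p}}^p,
\]
and a standard triangle-inequality trick (replacing $\widetilde{b}(z)$ by $\langle b\rangle_{E(z,r)}$ at the cost of a factor $2$) closes this direction.

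For the reverse inclusion I would interpose the dyadic Berezin transform $\widehat{b}_\ell$ (for any fixed $\ell \in \{1,\dots, N\}$) and split
\[
\left(\int_D |b - \widetilde{b}(z)|^p |s_z|^2\, dV\right)^{1/p} \leq A(z) + B(z),
\]
with $A(z) := (\int_D |b - \widehat{b}_\ell|^p |s_z|^2\, dV)^{1/p}$ and $B(z) := (\int_D |\widehat{b}_\ell - \widetilde{b}(z)|^p |s_z|^2\, dV)^{1/p}$. Since $s_z \in A^2(D)$ with $\|s_z\|_{A^2} = 1$, and $|b - \widehat{b}_\ell|^p\, dV$ is a Carleson measure on $A^2(D)$ by Lemma \ref{carleson lemma} (applied with $p_0 = p$ and $q = p$), $A(z)$ is uniformly dominated by $C\|b\|_{\textnormal{BMO}_{\beta}^{\:p}} \leq C_r \|b\|_{\textnormal{BMO}_{r}^{\:p}}$. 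To handle $B(z)$, I use $\widetilde{b}(z) = \int b|s_z|^2\, dV$ and the $L^2$ normalization to write $\widehat{b}_\ell(\zeta) - \widetilde{b}(z) = \int_D (\widehat{b}_\ell(\zeta) - b(w))|s_z(w)|^2\, dV(w)$. By the triangle inequality, this is bounded pointwise in $\zeta$ by
\[
|\widehat{b}_\ell(\zeta) - \widehat{b}_\ell(z)| + \int_D |\widehat{b}_\ell(z) - \widehat{b}_\ell(w)|\,|s_z(w)|^2\, dV(w) + \int_D |\widehat{b}_\ell(w) - b(w)|\,|s_z(w)|^2\, dV(w).
\]
The last term is a uniform constant multiple of $\|b\|_{\textnormal{BMO}_{\beta}^{\:p}}$ by Carleson as before.

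For the middle term, I combine Proposition \ref{averagegrowthrate} (which gives $|\widehat{b}_\ell(z)-\widehat{b}_\ell(w)| \lesssim \|b\|_{\textnormal{BMO}_{3\beta}^{\:p}}(1+d_K(z,w))$, and this is where $3\beta$ enters) with the identity $|s_z(w)|^2 \approx |\rho(z)|^{n+1}/F(z,w)^{2(n+1)}$, convert $d_K(z,w)$ to a power of $F(z,w)$ via Lemma \ref{metric estimate} for a small parameter $\varepsilon > 0$, and conclude by Proposition \ref{RudinForelli}. The residual first term $|\widehat{b}_\ell(\zeta) - \widehat{b}_\ell(z)|$ is reinserted into the $B(z)$ integral and disposed of by running the same Proposition \ref{averagegrowthrate}-plus-Rudin-Forelli argument with the $p$-th power. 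Putting everything together yields $B(z) \lesssim \|b\|_{\textnormal{BMO}_{r}^{\:p}}$ uniformly in $z$, as required.

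The main technical obstacle will be calibrating the exponent $\varepsilon$ in Lemma \ref{metric estimate}: the Kobayashi distance contributes a factor that grows like a power of $|\rho(z)|^{-1}|\rho(w)|^{-1}F(z,w)^{2}$, which must be absorbed into the $|\rho(z)|^{n+1}F(z,w)^{-2(n+1)}$ weight coming from $|s_z|^2$ so that the resulting kernel $|\rho(z)|^{(n+1)(1-\varepsilon)}|\rho(w)|^{-\varepsilon(n+1)}F(z,w)^{-2(n+1)(1-\varepsilon)}$ still falls within the scope of Proposition \ref{RudinForelli} and, upon integration, pairs cleanly with the prefactor to produce a bound independent of $z$. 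Once this calibration is in place, the rest of the argument is bookkeeping.
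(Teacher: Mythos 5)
Your proposal is correct and follows essentially the same route as the paper's proof: the easy containment via Corollary \ref{RecipMeasureBall} plus the triangle-inequality swap of $\widetilde{b}(z)$ for $\langle b\rangle_{E(z,r)}$, and the reverse containment via the splitting through the dyadic Berezin transform $\widehat{b}_\ell$, with $A(z)$ handled by the Carleson estimate of Lemma \ref{carleson lemma} and $B(z)$ by the pointwise bound $|\widehat{b}_\ell(\zeta)-\widetilde{b}(z)|\le |\widehat{b}_\ell(\zeta)-\widehat{b}_\ell(z)| + \int_D|\widehat{b}_\ell(z)-\widehat{b}_\ell(w)||s_z(w)|^2\,dV + \int_D|\widehat{b}_\ell(w)-b(w)||s_z(w)|^2\,dV$, Proposition \ref{averagegrowthrate} (the source of $3\beta$), Lemma \ref{metric estimate}, and Proposition \ref{RudinForelli}, with the residual term reinserted at power $p$. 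The only cosmetic difference is that you describe $|s_z(w)|^2\approx|\rho(z)|^{n+1}/F(z,w)^{2(n+1)}$ as an identity, whereas only the upper bound (from Proposition \ref{PeakingExistence} and \eqref{20200914eq01}) is needed and available off the diagonal; this does not affect the argument.
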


\begin{rem}
Lemma \ref{BerezinKobayashiEquivalence} gives that for any $r,r'\geq 3 \beta$, $\textnormal{BMO}_{r}^{\:p}=\textnormal{BMO}_{r'}^{\:p}.$
\end{rem}

\begin{lemma}\label{containment} There holds $\textnormal{BMO}_{\mathcal{D}}^{\:p} \subset \textnormal{BMO}_r^{\:p}$ for all $r>0,$ and moreover for each $r$ there is a constant $C_r$ so $\|b\|_{\textnormal{BMO}_r^{\:p}} \leq C_r \|b\|_{\textnormal{BMO}_{\mathcal{D}}^{\:p}}$ for all $b \in \textnormal{BMO}_{\mathcal{D}}^{\:p}.$
\begin{proof}
It is enough to show, that given a Kobayashi ball $E(z,r)$, there exists a tent $\widehat{K}_z \in \mathcal{T}_{\ell}$ for some $\ell$ so that $E(z,r) \subset \widehat{K}_z$ and $|E(z,r)| \approx |\widehat{K}_z|,$ where the implicit constant only depends on $r,$ not $z$. Assuming the claim, then we have for any $z \in D$,
\begin{align*}
\|b\|_{\textnormal{BMO}_{\mathcal{D}}^{\:p} }& \geq  \left(\frac{1}{|\widehat{K}_z|} \int_{\widehat{K}_z}|b-\langle b \rangle_{\widehat{K}_z}|^p \, dV \right)^{1/p}\\
& \gtrsim \left(\frac{1}{|E(z,r)|} \int_{E(z,r)}|b-\langle b \rangle_{\widehat{K}_z}|^p \, dV \right)^{1/p}.
\end{align*}

Then we estimate, using the triangle inequality plus H\"{o}lder's inequality.
\begin{eqnarray*}
&& \left(\frac{1}{|E(z,r)|} \int_{E(z,r)}|b-\langle b \rangle_{E(z,r)}|^p \, dV \right)^{1/p} \\
&& \quad  \leq \left(\frac{1}{|E(z,r)|} \int_{E(z,r)}|b-\langle b \rangle_{\widehat{K}_z}|^p \, dV \right)^{1/p}+ \left(\frac{1}{|E(z,r)|} \int_{E(z,r)}|\langle b \rangle_{\widehat{K}_z}-\langle b \rangle_{E(z,r)}|^p \, dV \right)^{1/p}\\
&& \quad  \leq 2\left(\frac{1}{|E(z,r)|} \int_{E(z,r)}|b-\langle b \rangle_{\widehat{K}_z}|^p \, dV \right)^{1/p} \\
&& \quad \lesssim \|b\|_{\textnormal{BMO}_{\mathcal{D}}^{\:p}},
\end{eqnarray*}
as required.

Now we turn to prove the claim. We may assume that $\delta(z)<\delta_0,$ where $\delta_0$ is some appropriate cut-off; otherwise the claim is trivial. First, note by Corollary \ref{VolumeKobayashiBalls}, $|E(z,r)| \approx \delta(z)^{n+1},$ where the implicit constant only depends on $r$. Thus, it suffices to find a tent $\widehat{K}_z$ containing $E(z,r)$ so that $|\widehat{K}_z| \approx \delta(z)^{n+1}.$  First, we remark that if $w \in E(z,r),$ then by Theorem \ref{BBTheorem}, there is a constant $C_{1,r}$ so that $|\pi(w)-w| \leq C_{1,r} \delta(z).$ Moreover, by the same result we also have
$$
d_H(\pi(w),\pi(z)) \leq C_{2,r} \sqrt{\delta(z)^{1/2} \delta(w)^{1/2}} \leq C_{3,r} \delta(z)^{1/2}.
$$
 Let $t= \max\{\sqrt{C_{1,r}}, C_{3,r}\} \delta(z)^{1/2}.$ By the construction of the dyadic systems, there is a dyadic set $Q_j^k$ corresponding to $\widehat{K}_j^k \in \mathcal{T}_{\ell}$ so that $\pi(z) \in Q_j^k,$ $B(\pi(z),t) \subset Q_j^k$, and moreover $s^{-k} \delta \approx t.$ The result follows immediately taking $\widehat{K}_z=\widehat{K}_j^k$. 
\end{proof}
\end{lemma}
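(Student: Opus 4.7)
The plan is to reduce the inequality to the following geometric claim: for every $z\in D$ and every fixed $r>0$ there exists a dyadic tent $\widehat K_z\in\mathcal T_\ell$ (for some $\ell$ among the finitely many lattices) such that
$$ E(z,r)\subset \widehat K_z\qquad\text{and}\qquad |\widehat K_z|\lesssim_r |E(z,r)|.$$
Once this claim is available, I would use the standard two-step comparison trick. First, since $E(z,r)\subset\widehat K_z$ and the two sets are comparable in volume, I would replace the Kobayashi mean $\langle b\rangle_{E(z,r)}$ by the dyadic mean $\langle b\rangle_{\widehat K_z}$ at the cost of a factor of two and then drop the restriction from $\widehat K_z$ down to $E(z,r)$ by the volume comparability. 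Concretely,
$$\frac{1}{|E(z,r)|}\int_{E(z,r)}|b-\langle b\rangle_{E(z,r)}|^p dV \lesssim_r \frac{1}{|\widehat K_z|}\int_{\widehat K_z}|b-\langle b\rangle_{\widehat K_z}|^p dV \leq \|b\|_{\textnormal{BMO}_{\mathcal D}^{\:p}}^p,$$
and taking the supremum over $z$ finishes the argument.

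The substantive step is therefore the geometric claim. To produce $\widehat K_z$, the idea is to match the scales of a Kobayashi ball against those of the dyadic tents. By Corollary~\ref{VolumeKobayashiBalls} we have $|E(z,r)|\approx_r \delta(z)^{n+1}$, while a dyadic tent $\widehat K_j^k$ has volume $\approx s^{-k(2n+2)}\delta^{2n+2}$ by Lemma~\ref{DyaSystem}. So I would choose $k$ so that $s^{-k}\delta\approx_r \delta(z)^{1/2}$, which automatically gives the volume match $|\widehat K_j^k|\approx_r \delta(z)^{n+1}\approx |E(z,r)|$, and also makes the normal depth $s^{-2k}\delta^2\approx_r \delta(z)$ of $\widehat K_j^k$ comparable to what a Kobayashi ball needs. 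Then, using Lemma~\ref{PolyContain} (or equivalently Theorem~\ref{BBTheorem} and Lemma~\ref{BallBox}), any $w\in E(z,r)$ satisfies $|\pi(w)-\pi(z)|\lesssim_r \delta(z)^{1/2}$ and $|\pi(w)-w|\lesssim_r \delta(z)$, so $\pi(E(z,r))$ lies in a horizontal-metric ball $B(\pi(z),C_r\delta(z)^{1/2})$ on $bD$. By Lemma~\ref{HK2}, for at least one of the finitely many lattices $\mathcal Q_\ell$ this boundary ball is contained in a single dyadic boundary set $Q_j^k$ with $s^{-k}\delta$ at the matched scale; the tent $\widehat K_j^k$ above $Q_j^k$ is the desired $\widehat K_z$.

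The main obstacle, and the place where I expect the argument to be the most delicate, is ensuring that a \emph{single} lattice among the $\mathcal Q_\ell$ works uniformly — that is, that the boundary ball $B(\pi(z),C_r\delta(z)^{1/2})$ fits inside some $Q_j^k$ at the right generation. This is exactly the content of part~(2) of Lemma~\ref{HK2}, which is why the finite family of dyadic systems is needed in the first place. A mild technicality is that the matching $s^{-k}\delta\approx_r\delta(z)^{1/2}$ only pins $k$ up to an additive constant, so one has to verify that the normal depth $s^{-2k}\delta^2$ really does dominate $\delta(z)+C_r\delta(z)\approx_r\delta(z)$ to get $E(z,r)\subset\widehat K_z$; this amounts to a careful choice of the absorbed constants but is not structurally hard. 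Boundary cases where $\delta(z)$ is not small (i.e.\ $z$ far from $bD$) can be dispatched by choosing the top-level tent and noting that $E(z,r)$ is then contained in a fixed compact set, so volume comparability is trivial up to a constant depending on $r$ and $D$.
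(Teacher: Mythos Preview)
Your proposal is correct and follows essentially the same approach as the paper: reduce to the geometric claim that every Kobayashi ball $E(z,r)$ sits inside a dyadic tent of comparable volume, deduce the BMO inequality by the standard ``replace the average and use containment'' trick, and then prove the claim by matching the scale $s^{-k}\delta\approx_r\delta(z)^{1/2}$ via the Balogh--Bonk estimates (Theorem~\ref{BBTheorem}) and the Hyt\"onen--Kairema construction (Lemma~\ref{HK2}). The only cosmetic difference is that the paper invokes Theorem~\ref{BBTheorem} directly to control $|\pi(w)-w|$ and $d_H(\pi(w),\pi(z))$ rather than going through Lemma~\ref{PolyContain}, and it is slightly terser about the scale-matching and the far-from-boundary case, both of which you handle correctly.
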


\begin{proof}[Proof of Theorem \ref{BMOequivalence}]
Assume $b \in \textnormal{BMO}_r^{\:p}$ where $r \geq 3 \beta$. By the triangle inequality, it clearly suffices to prove 
 \begin{equation}\sup_{K_j^k \in \mathcal{T}_{\ell}, 1\leq \ell \leq N}\frac{1}{|\widehat{K}_j^k|} \int_{\widehat{K}_j^k}\left|b-\langle b \rangle_{E(c_j^k,\beta)}\right|^p \, dV \lesssim \|b|_{\textnormal{BMO}^{\:p}_{3 \beta}}^p.\label{containmenteq} \end{equation}
We fix an initial kube $K_{j_0}^{k_0}$ and decompose it in the following way:
\begin{eqnarray*}
&& \frac{1}{|\widehat{K}_{j_0}^{k_0}|} \int_{\widehat{K}_{j_0}^{k_0}}\left|b-\langle b \rangle_{E(c_{j_0}^{k_0},\beta)}\right|^p \, dV  =\sum_{k=k_0+1}^{\infty} \sum_{\substack{j: \\ K_j^k \preceq K_{j_0}^{k_0}}} \left(\frac{|K_j^k|}{|\widehat{K}_{j_0}^{k_0}|}\right)\left( \frac{1}{|K_j^k|}\int_{K_{j}^{k}}\left|b-\langle b \rangle_{E(c_{j_0}^{k_0},\beta)}\right|^p \, dV\right)\\
&& \quad \lesssim \sum_{k=k_0+1}^{\infty} \sum_{j: K_j^k \preceq K_{j_0}^{k_0}} \left(\frac{|K_j^k|}{|\widehat{K}_{j_0}^{k_0}|}\right)\left( \frac{1}{|E(c_{j}^{k},\beta)|}\int_{E(c_{j}^{k},\beta)}\left|b-\langle b \rangle_{E(c_{j}^{k},\beta)}\right|^p \, dV\right)\\
&& \quad \quad \quad  + \sum_{k=k_0+1}^{\infty} \sum_{j: K_j^k \preceq K_{j_0}^{k_0}} \left(\frac{|K_j^k|}{|\widehat{K}_{j_0}^{k_0}|}\right)\left( \frac{1}{|K_j^k|}\int_{K_{j}^{k}}\left|\langle b \rangle_{E(c_{j}^{k},\beta)}-\langle b \rangle_{E(c_{j_0}^{k_0},\beta)}\right|^p \, dV\right),
\end{eqnarray*}
where the implicit constant depends on $p, \beta.$  A repeated application of Lemma \ref{differenceofaverages} and the triangle inequality yields
$$\left|\langle b \rangle_{E(c_{j}^{k},\beta)}-\langle b \rangle_{E(c_{j_0}^{k_0},\beta)}\right| \leq C(k-k_0)\|b\|_{\textnormal{BMO}^{\:p}_{3 \beta}} .$$

Thus, we have 
\begin{align*}
\frac{1}{|\widehat{K}_{j_0}^{k_0}|} \int_{\widehat{K}_{j_0}^{k_0}}\left|b-\langle b \rangle_{E(c_{j_0}^{k_0},\beta)}\right|^p \, dV & \lesssim   \sum_{k=k_0 }^{\infty} \sum_{\substack{j: \\ K_j^k \preceq \widehat{K}_{j_0}^{k_0}} } s^{-(2n+2)(k-k_0)} \|b\|_{\textnormal{BMO}_{\beta}^{\:p}}^p \\
& \quad + \sum_{k=k_0 }^{\infty} \sum_{\substack{j: \\ K_j^k \preceq \widehat{K}_{j_0}^{k_0} }}s^{-(2n+2)(k-k_0)}C^p(k-k_0)^p\|b\|_{\textnormal{BMO}_{3 \beta}^{\:p}}^p\\
& \leq C_{p,\beta}  \|b\|_{\textnormal{BMO}_{3 \beta}^{\:p}}^p N \sum_{k=1}^{\infty} s^{-(2n+2)k} (k^p+1)  \\
& \lesssim C_{p,s,\beta}  \|b\|_{\textnormal{BMO}_{3 \beta}^{\:p}}^p
\end{align*}
where $C_{p,s,\beta}$ is a constant that only depends on $p,s$ and $\beta$, not $j_0$ or $k_0.$ This establishes \eqref{containmenteq}. Note that the proof at the same time establishes that $b \in L^p(D).$
\end{proof}

\subsection{Equivalence of various \textnormal{VMO} spaces} In the second part of this section, we establish the equivalence between various \textnormal{VMO} spaces. We first define the dyadic \textnormal{VMO} spaces. 
	\begin{defin}
		For $1 \le p<\infty$, we say an $L^p(D)$ function $b \in \textnormal{VMO}_{\mathcal{D}}^{\:p}$ if 
		$$\lim_{n \rightarrow \infty} \sup_{\substack{K_j^k \in \mathcal{T}_{\ell}, 1\leq \ell \leq N \\ k \geq n }}\left(\frac{1}{|\widehat{K}_j^k|} \int_{\widehat{K}_j^k}|b-\langle b \rangle_{\widehat{K}_j^k}|^p \, dV \right)^{1/p}=0.$$
	\end{defin}
	
	We can also define a version of $\textnormal{VMO}$ via the modified Berezin transform (see \eqref{modifiedBerezin}):
	
	\begin{defin}
		For $1 \le p<\infty$, we say an $L^p(D)$ function $b \in \textnormal{VMO}_{\mathcal{B}}^{\:p}$ if 
		
		$$\lim_{z \rightarrow bD} \int_{D} | b(w)- \widetilde{b}(w)|^p|s_z(w)|^2 \, dV(w)=0.$$
	\end{defin}
	
	As in the case of \textnormal{BMO}, we obtain equivalences between these various \textnormal{VMO} spaces. 
	
	\begin{thm} \label{VMO equivalence}
		Suppose $D$ is strongly pseudoconvex with $C^2$ boundary. Then there holds $\textnormal{VMO}_{\mathcal{B}}^{\:p}= \textnormal{VMO}_{\mathcal{D}}^{\:p}=\textnormal{VMO}^{\: p}_{r}$ for all $r\geq 3\beta.$ 
		
		\begin{proof} 
  
			The containment $\textnormal{VMO}_{\mathcal{B}}^{\:p} \subset \textnormal{VMO}_r^{\:p}$ is easily seen to hold from the proof of Lemma \ref{BerezinKobayashiEquivalence}. We need to show the containment $\textnormal{VMO}_r^{\:p} \subset \textnormal{VMO}_{\mathcal{B}}^{\:p} .$ Fix $b \in \textnormal{VMO}_r^{\:p}.$ Let $\varepsilon>0$ and fix a dyadic lattice $\mathcal{T}_{\ell}.$ We assume $\|b\|_{\text{BMO}_r^p}=1$ for simplicity.
			
			We first claim we can choose $\delta>0$ sufficiently small so that if $-\rho(z)<\delta$, we have
			\begin{equation}
				\int_{D } |\widehat{b}_{\ell}(z)-\widehat{b}_{\ell}(w)| |s_{z}(w)|^2 \, dV(w) \lesssim \varepsilon. \label{smalldiffavg}
			\end{equation} 
			
			To see this, let $R>0$ be chosen so that the constant $C(R)<\varepsilon$ in Proposition \ref{RudinForelli} and then choose $F$ as in Lemma \ref{CompactDifferenceofAverages} with parameters $\varepsilon$ and $R$. We then estimate as follows uniformly in $z \in F^c$, using Lemma \ref{metric estimate}, Lemma \ref{averagegrowthrate}, and Lemma \ref{CompactDifferenceofAverages}:
			\begin{align*}
				\int_{D \setminus F} |\widehat{b}_{\ell}(z)-\widehat{b}_{\ell}(w)| |s_{z}(w)|^2 \, dV(w) & = \int_{(D \setminus F) \cap E(z,R)} |\widehat{b}_{\ell}(z)-\widehat{b}_{\ell}(w)| |s_{z}(w)|^2 \, dV(w)\\
				& \quad + \int_{E(z,R)^c} |\widehat{b}_{\ell}(z)-\widehat{b}_{\ell}(w)| |s_{z}(w)|^2 \, dV(w)\\
				& \lesssim \varepsilon \int_{D} (1+d_K(z,w)) |s_{z}(w)|^2 \, dV\\
				& \quad + \int_{E(z,R)^c} (1+d_K(z,w)) |s_{z}(w)|^2 \, dV \lesssim \varepsilon.
			\end{align*}
   For the corresponding integral on the set $F$, by using the Rudin-Forelli estimates as in the proof of Lemma \ref{BerezinKobayashiEquivalence} and noting that the kernel $$\frac{|\rho(w)|^{-\varepsilon(n+1)}} {F(z,w)^{2(n+1)(1-\varepsilon)}}$$ remains uniformly bounded if $w \in F$ , we may choose $\delta$ sufficiently small (depending on our choice of $F$) so that if $-\rho(z)<\delta$, then $z \in F^c$ and also
   \begin{equation} \int_{F} |\widehat{b}_{\ell}(z)-\widehat{b}_{\ell}(w)| |s_{z}(w)|^2 \, dV(w) \lesssim \varepsilon. \end{equation}
   
   This finishes the proof of the claim \eqref{smalldiffavg}. The proof also shows that, perhaps for a smaller choice of $\delta$, the following $L^p$ version holds as well:

   \begin{equation}
				\int_{D } |\widehat{b}_{\ell}(z)-\widehat{b}_{\ell}(w)|^p|s_{z}(w)|^2 \, dV(w) \lesssim \varepsilon. \label{smalldiffavgLp}
			\end{equation} 
			
			Additionally, we can assume that if $c_j^k \in D \setminus F$, then $k \geq N_1$ for some large integer $N_1$ and 
			for all $k \geq N_1,$ we have that 
			$$
			\frac{1}{|E(c_j^k,\beta)|} \int_{E(c_j^k,\beta)}|b-\langle b \rangle_{E(c_j^k,\beta)}|^p \, dV <\varepsilon^{p}.
			$$
			Then we have the additional bound, estimating as in Lemma \ref{carleson lemma}:
			\begin{equation}
				 \sup_{z \in D} \int_{D \setminus F} |\widehat{b}_{\ell}(w)-b(w)|^p |s_{z}(w)|^2 \, dV(w) \lesssim \varepsilon. \label{carlesonvmo} \end{equation}

Since $s_z$ converges to $0$ uniformly on compact subsets, with perhaps a smaller choice of $\delta$ we can bound the integral over $F$ by $\varepsilon$ as well and achieve the control:
    \begin{equation} \sup_{\{z \in D: -\rho(z)<\delta\}}\int_{D} |\widehat{b}_{\ell}(w)-b(w)|^p |s_z(w)|^2 \, dV(w)\lesssim \varepsilon \label{modcarlesonvmo}. \end{equation}
			
			

		 Putting our previous calculations in \eqref{smalldiffavg}, \eqref{smalldiffavgLp}, and \eqref{modcarlesonvmo} together with the argument bounding $B(z)$ pointwise in Lemma 5.5, we have, for this choice of $\delta$:
			\begin{align*}
				& \sup_{\{z \in D: -\rho(z)<\delta\}} \int_{D } |\widehat{b}_{\ell}-\widetilde{b}(z)|^p |s_z|^2 \, dV \\
				& \quad \lesssim \sup_{\{z \in D: -\rho(z)<\delta\}} \int_{D }  |\widehat{b}_{\ell}(\zeta)-\widehat{b}_{\ell}(z)|^p |s_z(\zeta)|^2 \, dV(\zeta)+ \varepsilon  \sup_{z\in D} \int_{D }|s_z(\zeta)|^2\, dV(\zeta)  \lesssim \varepsilon. \\
			\end{align*}
			
			Now we are ready to show $b \in \textnormal{VMO}_{\mathcal{B}}^{\:p}.$ We have, using the estimate directly above and \eqref{modcarlesonvmo}
			\begin{align*}
				\sup_{\{z \in D: -\rho(z)<\delta\}} \int_{D } |b-\widetilde{b}(z)|^p |s_z|^2 \, dV & \lesssim \sup_{\{z \in D: -\rho(z)<\delta\}} \left(\int_{D } |b-\widehat{b}_{\ell}|^p |s_z|^2 \, dV+  \int_{D } |\widehat{b}_{\ell}-\widetilde{b}(z)|^p |s_z|^2 \, dV\right) \\
				& \lesssim \varepsilon.
			\end{align*}
			
			
			
			This finishes the proof that $\textnormal{VMO}_r^{\:p}=\textnormal{VMO}_{\mathcal{B}}^{\:p} $ for $r \geq 3\beta.$ The equality $\textnormal{VMO}_r^{\:p}=\textnormal{VMO}_{\mathcal{D}}^{\:p} $ follows in a straightforward way from the arguments in the proofs of Lemma \ref{containment} and Theorem \ref{BMOequivalence}.

		\end{proof}
	\end{thm}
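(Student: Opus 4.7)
My plan is to mirror the three-way equivalence proof for BMO (Theorem \ref{BMOequivalence}) but upgrade each step from a uniform estimate to one that vanishes as the base point approaches $bD$. The easy direction is $\textnormal{VMO}_{\mathcal{B}}^{\:p} \subset \textnormal{VMO}_r^{\:p}$: given $z \in D$ sufficiently close to the boundary, I apply the lower bound $|s_z(w)|^2 \gtrsim |E(z,r)|^{-1}$ on $E(z,r)$ (Corollary \ref{RecipMeasureBall}) together with the triangle inequality $|b - \langle b\rangle_{E(z,r)}| \leq 2|b - \widetilde{b}(z)|$ averaged over $E(z,r)$, exactly as in the BMO direction of Lemma \ref{BerezinKobayashiEquivalence}. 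The resulting pointwise bound vanishes as $z \to bD$ because the full integral over $D$ does.

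For the nontrivial inclusion $\textnormal{VMO}_r^{\:p} \subset \textnormal{VMO}_{\mathcal{B}}^{\:p}$, fix a dyadic lattice $\mathcal{T}_\ell$ and decompose
\begin{equation*}
\int_D |b(w) - \widetilde{b}(z)|^p |s_z(w)|^2\, dV(w) \lesssim \int_D |b - \widehat{b}_\ell|^p |s_z|^2\, dV + \int_D |\widehat{b}_\ell - \widetilde{b}(z)|^p |s_z|^2\, dV.
\end{equation*}
For the first integral I adapt the Carleson-measure estimate of Lemma \ref{carleson lemma} to a vanishing version: given $\varepsilon > 0$, use the VMO hypothesis to find $N_1$ so that every kube of generation $\geq N_1$ has oscillation less than $\varepsilon$. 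Split $D = F \cup (D \setminus F)$ where $F$ collects the (finitely many) kubes of generation $< N_1$. The integral over $D \setminus F$ is then $\lesssim \varepsilon$ times the Carleson-type sum $\|s_z\|_{L^2}^2 \approx 1$, while the integral over $F$ vanishes uniformly as $z \to bD$ because $s_z \to 0$ uniformly on compact sets (as can be read off from the explicit form of $S_z$ in Definition \ref{20220924defn01} combined with $|\rho(z)| \to 0$). The second integral is treated exactly as in the proof of Lemma \ref{BerezinKobayashiEquivalence}, except I invoke Lemma \ref{CompactDifferenceofAverages} in place of Proposition \ref{averagegrowthrate} to get $|\widehat{b}_\ell(z) - \widehat{b}_\ell(w)| \lesssim \varepsilon(1 + d_K(z,w))$ for $z,w$ outside a compact set (and $d_K(z,w) < R$), together with the vanishing Rudin-Forelli bound (the second part of Proposition \ref{RudinForelli}) to handle the tail $d_K(z,w) \geq R$.

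The anticipated obstacle is the usual one when converting BMO-type arguments to their VMO-analogue: there are two independent ``smallness parameters'' (the radius $R$ of the far-field and the distance to $bD$), and they must be chosen in the correct order. Concretely, I must first fix $R$ so that the vanishing Rudin-Forelli estimate contributes $O(\varepsilon)$, then fix the compact set $F$ via Lemma \ref{CompactDifferenceofAverages} (which depends on $R$ and $\varepsilon$), and only afterward choose $\delta > 0$ so that $-\rho(z) < \delta$ forces $z \in F^c$ and simultaneously forces $\int_F |s_z|^2\, dV < \varepsilon$. Carrying out the book-keeping so that the single constant in front of $\varepsilon$ remains independent of $z$ (and in particular handling the $L^p$ strengthening needed for the second term) is the technical heart of the argument.

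Finally, for $\textnormal{VMO}_{\mathcal{D}}^{\:p} = \textnormal{VMO}_r^{\:p}$ I run the proofs of Lemma \ref{containment} and Theorem \ref{BMOequivalence} with the extra book-keeping that only tents/kubes of high generation are considered. In the ``$\subset$'' direction, the containing tent $\widehat{K}_z$ constructed in Lemma \ref{containment} for $z$ near $bD$ automatically has large generation, so the dyadic VMO hypothesis forces its oscillation to be small. In the ``$\supset$'' direction, the geometric series bound in the proof of Theorem \ref{BMOequivalence} localizes to subkubes of a fixed high-generation $K^{k_0}_{j_0}$, and there each sub-oscillation $\frac{1}{|E(c_j^k,\beta)|}\int_{E(c_j^k,\beta)}|b - \langle b\rangle|^p$ is $<\varepsilon^p$ by the $\textnormal{VMO}_r^{\:p}$ hypothesis; the only new ingredient is checking that the logarithmic growth factor $(k-k_0)^p$ from Lemma \ref{differenceofaverages} is absorbed by the geometric decay $s^{-(2n+2)(k-k_0)}$, which is already verified in the BMO argument.
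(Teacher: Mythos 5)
Your proposal follows the paper's proof essentially verbatim: the same splitting $b-\widetilde b(z)=(b-\widehat b_\ell)+(\widehat b_\ell-\widetilde b(z))$, the same vanishing Carleson-type estimate over high-generation kubes plus uniform decay of $s_z$ on the compact remainder, the same use of Lemma \ref{CompactDifferenceofAverages} together with the vanishing Rudin--Forelli estimate with the identical ordering of the parameters $R$, then $F$, then $\delta$, and the same reduction of the dyadic equality to the arguments of Lemma \ref{containment} and Theorem \ref{BMOequivalence}. The only place the paper is slightly more explicit is the portion of $\int_F|\widehat b_\ell(z)-\widehat b_\ell(w)|\,|s_z(w)|^2\,dV(w)$ over the compact set $F$, where one notes that the kernel $|\rho(w)|^{-\varepsilon(n+1)}F(z,w)^{-2(n+1)(1-\varepsilon)}$ is uniformly bounded on $F$ so the growth of $d_K(z,w)$ is beaten by the polynomial decay of $|s_z|^2$ there --- a minor refinement of your requirement that $\int_F|s_z|^2\,dV<\varepsilon$, with no change to the structure of the argument.
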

	
\subsection{Completing the Proof of Theorem \ref{BMOmaintheorem}}
The technical assumption $r \ge 3 \beta$ in Theorems \ref{BMOequivalence} and \ref{VMO equivalence} can be removed as a consequence of the construction of the dyadic structure inside $D$. Such a construction is not a main focus of this paper, hence here we only include a sketch of how to do it. 

The main idea is to further decompose the dyadic structure in Section \ref{Prelim}. Indeed, let $N \in \N$ and pick any sequence of numbers $1=c_0<c_1<\dots<c_N=s^2$ (for example, one can let $c_\ell:=1+\frac{\ell(s^2-1)}{N}$). Recall that by 
    \eqref{DyadicTents}  and the definition of $K_j^K$, one has 
    $$
    K_j^k=\left\{z \in D: \pi(z) \in Q_j^k \ \text{and} \ s^{-2(k+1)}\delta^2 \le |\pi(z)-z|<s^{-2k} \delta^2 \right\}.
    $$
    Now for $\ell \in \{1, \dots, N\}$, define
    $$
    K_{j, \ell}^k:=\left\{z \in D: \pi(z) \in Q_j^k \ \text{and} \ c_{\ell-1} s^{-2(k+1)}\delta^2 \le |\pi(z)-z| <  c_\ell s^{-2(k+1)}\delta^2 \right\}.
    $$
    It is clear that $\bigcup_{\ell=1}^N K_{j, l}^k=K_j^k$. Next for each $K_{j, l}^k$, we further decompose it via orthogonal projection as follows: let $K_{j, l}^k=\bigcup_{s=1}^{N'} K_{j, l, s}^k$ be a maximal decomposition of the set $K_{j, l}^k$ such that $\pi \left(K_{j, l, s}^k \right)$ is comparable to a boundary ball in the horizontal metric with radius $\frac{1}{N}s^{-k} \delta$. Here, $N'$ is a constant which only depends on $N$ and $bD$. Now for the dyadic cube $Q_j^k$ and the chosen integer $N$, we associate it with the refined ``kubes", namely $\{K_{j, l, s}^k\}$ instead of $K_j^k$, and we define the $\textnormal{BMO}_{\mathcal D, ref}^p$ via these refined collections. Finally, using the argument \cite[Theorem 2.15]{HH2021} together with Lemma \ref{PolyContain}, one can check that there exists some $\beta'$, which can be made any arbitrarily small (as long as $N$ is large enough) such that $K_{j, l, s}^k \subseteq E(c_{j, l, s}^k, \beta')$, where $c_{j, l, s}^k$ can be any point which belongs to $K_{j, l, s}^k$. To this end, by the proof of Theorem \ref{BMOequivalence}, one can see that $\textnormal{BMO}_r^{\:p}=\textnormal{BMO}_{r'}^{\:p}$ for $r, r' \ge 3\beta'$. Since $\beta'$ can be made any arbitrarily small, we see that $\textnormal{BMO}_r^p=\textnormal{BMO}_{r'}^{\:p}$ for $r, r'>0$. Moreover, one also has $\textnormal{BMO}_{\mathcal D, ref}^{\:p}=\textnormal{BMO}_{\mathcal D}^{\:p}$. Finally, the proof for VMO spaces follows in a similar manner.

\begin{bibdiv}
\begin{biblist}

\bib{AS2011}{article}{
   author={Abate, Marco},
   author={Saracco, Alberto},
   title={Carleson measures and uniformly discrete sequences in strongly
   pseudoconvex domains},
   journal={J. Lond. Math. Soc. (2)},
   volume={83},
   date={2011},
   number={3},
   pages={587--605},
}

\bib{ARS2012}{article}{
   author={Abate, Marco},
   author={Raissy, Jasmin},
   author={Saracco, Alberto},
   title={Toeplitz operators and Carleson measures in strongly pseudoconvex
   domains},
   journal={J. Funct. Anal.},
   volume={263},
   date={2012},
   number={11},
   pages={3449--3491},
}

\bib{Ax1986}{article}{
   author={Axler, Sheldon},
   title={The Bergman space, the Bloch space, and commutators of
   multiplication operators},
   journal={Duke Math. J.},
   volume={53},
   date={1986},
   number={2},
   pages={315--332}
}

\bib{BB2000}{article}{
   author={Balogh, Zolt\'{a}n M.},
   author={Bonk, Mario},
   title={Gromov hyperbolicity and the Kobayashi metric on strictly
   pseudoconvex domains},
   journal={Comment. Math. Helv.},
   volume={75},
   date={2000},
   number={3},
   pages={504--533},
}

\bib{B1984}{article}{
   author={Barrett, D. E.},
   title={Irregularity of the Bergman projection on a smooth bounded domain},
   journal={Ann. of Math.},
   volume={119},
   date={1984},
   number={2},
   pages={431-436},
}

\bib{B1992}{article}{
   author={Barrett, D. E.},
   title={Behavior of the Bergman projection on the Diederich-Forn\ae ss worm},
   journal={Acta Math.},
   volume={119},
   date={1984},
   number={2},
   pages={431-436},
}

\bib{BL2009}{article}{
   author={Barrett, D. E.},
   author={Lanzani, L.},
   title={The Levy transform on weighted boundary spaces for convex Reinhardt domains},
   journal={J. Funct. Anal.},
   volume={257},
   date={2009},
   pages={2780 -- 2819},
}

\bib{BBCZ}{article}{
   author={B\'{e}koll\'{e}, D.},
   author={Berger, C. A.},
   author={Coburn, L. A.},
   author={Zhu, K. H.},
   title={BMO in the Bergman metric on bounded symmetric domains},
   journal={J. Funct. Anal.},
   volume={93},
   date={1990},
   number={2},
   pages={310--350},
}

\bib{BL1993}{article}{
   author={Beatrous, Frank},
   author={Li, Song-Ying},
   title={On the boundedness and compactness of operators of Hankel type},
   journal={J. Funct. Anal.},
   volume={111},
   date={1993},
   number={2},
   pages={350--379}
}

\bib{CD2006}{article}{
title={Estimates for the Bergman and Szeg\H{o} projections for psedoconvex domains of finite type with locally diagonalizable Levi forms},
author={Charpentier, P.},
author={Dupain, Y.}
journal={Publ. Mat},
   volume={50},
   date={2006},
   pages={413 -- 446},
}

\bib{F1974}{article}{
title={The Bergman kernel and biholomorphic mappings of pseudoconvex domains},
author={Fefferman, C.},
journal={Invent. Math.},
   volume={26},
   date={1974},
   pages={1--65},
}

\bib{GHK2022}{article}{
title={Dyadic decomposition of convex domains of finite type and applications},
author={Gan, Chun},
author={Hu, Bingyang},
author={Khan, Ilyas}
journal={Math. Z.},
   volume={301},
   date={2022},
   pages={1939--1962},
}

\bib{GW2021}{article}{
title={Dominating Sets in Bergman Spaces on Strongly Pseudoconvex Domains},
author={Green, A. Walton },
author={Wagner, Nathan A.},
status={Preprint},
journal={Arxiv e-prints: 2107.04400},
date={2021}
}

\bib{HH2021}{article}{
title={Dyadic Carleson Embedding and Sparse
Domination of Weighted Composition Operators
on Strictly Pseudoconvex Domains},
author={Hu, Bingyang },
author={Huo, Zhenghui},
status={Preprint},
journal={Arxiv e-prints: 2104.12011},
date={2021}
}

\bib{HWW2020}{article}{
title={Bekoll\'{e}-Bonami estimates on some pseudoconvex domains},
author={Z. Huo},
author={N. A. Wagner},
author={B. D. Wick},
journal={Bull. Sci. Math.},
volume={170},
date={2021},
}

\bib{HK}{article}{
   author={Hyt\"{o}nen, Tuomas},
   author={Kairema, Anna},
   title={What is a cube?},
   journal={Ann. Acad. Sci. Fenn. Math.},
   volume={38},
   date={2013},
   number={2},
   pages={405--412},
}

\bib{KL2001}{article}{
   author={Krantz, Steven G.},
   author={Li, Song-Ying},
   title={Boundedness and compactness of integral operators on spaces of
   homogeneous type and applications. I},
   journal={J. Math. Anal. Appl.},
   volume={258},
   date={2001},
   number={2},
   pages={629--641}

}

\bib{KM1988}{article}{
   author={Krantz, Steven G.},
   author={Ma, Daowei},
   title={Bloch functions on strongly pseudoconvex domains},
   journal={Indiana Univ. Math. J.},
   volume={37},
   date={1988},
   number={1},
   pages={145--163}
}

\bib{KP2008}{article}{
   author={Krantz, Steven G.},
   author={Peloso, M.},
   title={The Bergman kernel and projection on non-smooth worm domains},
   journal={Houston J. Math.},
   volume={34},
   date={2008},
   pages={873--950}
}

\bib{KS1978}{article}{
   author={Kerzman, N.},
   author={Stein, E. M.},
   title={The Szeg\H{o} kernel in terms of Cauchy-Fantappi\`e kernels},
   journal={Duke Math. J.},
   volume={45},
   date={1978},
   number={2},
   pages={197--224},
}

\bib{LW2022}{article}{
title={The Cauchy-Szeg\H{o} Projection and its commutator for domains in $\mathbb{C}^n$ with minimal smoothness: Optimal bounds},
author={Duong, Xuan Thinh},
author={Lanzani, Loredana},
author={Li, Ji},
author={Wick, Brett D.}
status={Preprint},
journal={Arxiv e-prints:2005.12740v5
},
date={2022}
}

\bib{KY1996}{article}{
title={Boundary behavior of the Bergman curvature in strictly pseudoconvex polyhedral domains},
author={Kim, Kang-Tae},
author={Yu, Jiye},
journal={Pacific J. Math.},
volume={155},
number={2}, 
date={1992},
pages={377--395},
}

\bib{L1984}{article}{
title={The H\"older continuity of the Bergman projection and proper holomorphic mappings},
author={Ligocka, E.},
journal={Studia Math.},
volume={80},
date={1984},
pages={89 -- 107}
}

\bib{LS2012}{article}{
title={The Bergman projection in $L^p$},
author={Lanzani, L},
author={Stein, E. M.},
journal={Illinois J. Math.},
volume={56},
date={2012},
number={1},
pages={127 -- 154}
}

\bib{Li1992}{article}{
   author={Li, Huiping},
   title={BMO, VMO and Hankel operators on the Bergman space of strongly
   pseudoconvex domains},
   journal={J. Funct. Anal.},
   volume={106},
   date={1992},
   number={2},
   pages={375--408}
}

\bib{Li1994}{article}{
   author={Li, Huiping},
   title={Hankel operators on the Bergman spaces of strongly pseudoconvex
   domains},
   journal={Integral Equations Operator Theory},
   volume={19},
   date={1994},
   number={4},
   pages={458--476}
}

\bib{LiLu1994}{article}{
   author={Li, Huiping},
   author={Luecking, Daniel H.},
   title={BMO on strongly pseudoconvex domains: Hankel operators, duality
   and $\overline\partial$-estimates},
   journal={Trans. Amer. Math. Soc.},
   volume={346},
   date={1994},
   number={2},
   pages={661--691}
}

\bib{Lu1992}{article}{
   author={Luecking, Daniel H.},
   title={Characterizations of certain classes of Hankel operators on the
   Bergman spaces of the unit disk},
   journal={J. Funct. Anal.},
   volume={110},
   date={1992},
   number={2},
   pages={247--271}
}

\bib{MV2012}{article}{
   author={Mahajan, Prachi},
   author={Verma, Kaushal}, 
   title={Some Aspects of the Kobayashi and Carathéodory
Metrics on Pseudoconvex Domains},
   journal={J. Geom. Anal},
   volume={22},
   date={2012},
   pages={491--560}
}

\bib{Pe1994}{article}{
   author={Peloso, Marco M.},
   title={Hankel operators on weighted Bergman spaces on strongly
   pseudoconvex domains},
   journal={Illinois J. Math.},
   volume={38},
   date={1994},
   number={2},
   pages={223--249}
}

\bib{PS1977}{article}{
   author={Phong, D},
   author={Stein, E.M.},
   title={Estimates for the Bergman and Szeg\H{o} projections on strongly pseduconvex domains},
   journal={Duke Math. J.},
   volume={44},
   date={1977},
   pages={695--704}
}

\bib{M1989}{article}{
   author={McNeal, J.C.},
   title={Boundary behavior of the Bergman kernel function in $\mathbb C^2$},
   journal={Duke Math. J.},
   volume={58},
   date={1989},
   pages={499--512},
}

\bib{M1994}{article}{
   author={McNeal, J.C.},
   title={Estimates on the Bergman kernel of convex domains},
   journal={Adv. in Math.},
   volume={109},
   date={1994},
   pages={108 -- 139},
}

\bib{MS1994}{article}{
   author={McNeal, J.C.},
   author={Stein, E.M.}, 
   title={Mapping properties of the Bergman projection on convex domains of finite type},
   journal={Duke Math J,},
   volume={73},
   date={1994},
   pages={177 -- 199},
}

\bib{NRSW1989}{article}{
   author={Nagel, A},
   author={Rosay, J.-P}, 
   author={Stein, E.M.}, 
   author={Waigner, S.},
   title={Estimates for the Bergman and Szeg\H{o} kernels in $\mathbb C^2$},
   journal={Ann. of Math.},
   volume={129},
   date={1989},
   number={2}, 
   pages={113 -- 149},
}

\bib{Ra1986}{book}{
   author={Range, R. Michael},
   title={Holomorphic functions and integral representations in several
   complex variables},
   series={Graduate Texts in Mathematics},
   volume={108},
   publisher={Springer-Verlag, New York},
   date={1986},
   pages={xx+386},
   isbn={0-387-96259-X}
}

\bib{SW2021}{article}{
title={Weighted endpoint bounds for the Bergman and Cauchy-Szeg\H{o} projections on domains with near minimal smoothness},
author={Stockdale, Cody B.},
author={Wagner, Nathan A.},
status={Preprint},
journal={Arxiv e-prints:2005.12261
},
date={2021}
}
		
\bib{WW2021}{article}{
   author={Wagner, Nathan A.},
   author={Wick, Brett D.},
   title={Weighted $L^p$ estimates for the Bergman and Szeg\H{o} projections on
   strongly pseudoconvex domains with near minimal smoothness},
   journal={Adv. Math.},
   volume={384},
   date={2021},
   pages={Paper No. 107745, 45}
}

\bib{Z2013}{article}{
   author={Zeytuncu, Y.},
   title={$L^p$-regularity of weighted Bergman projections},
   journal={Trans. Amer. Math. Soc,},
   volume={365},
   date={2013},
   pages={2959--2976},
}

\bib{Z1992}{article}{
   author={Zhu, Kehe},
   title={BMO and Hankel operators on Bergman spaces},
   journal={Pacific J. Math.},
   volume={155},
   date={1992},
   number={2},
   pages={377--395},
}

\end{biblist}
\end{bibdiv}
\end{document}